\theoremstyle{plain}
\newtheorem{theorem}{Theorem}[section]
\newtheorem{proposition}{Proposition}[section]
\newtheorem{corollary}{Corollary}[section]
\newtheorem{lemma}[theorem]{Lemma}
\newtheorem{question}{Question}[section]
\theoremstyle{remark}
\newtheorem{definition}[theorem]{Definition}
\newcommand{\R}{\mathbb{R}}
\newcommand{\F}{\mathcal{F}}
\renewcommand{\P}{\mathbb{P}}
\newcommand{\E}{\mathbb{E}}
\newcommand{\Var}{\textnormal{Var}}
\newcommand{\Cov}{\textnormal{Cov}}
\newcommand{\Pbb}{\mathbb{P}}
\newcommand{\Rbb}{\mathbb{R}}
\newcommand{\Ebb}{\mathbb{E}}
\newcommand{\Nbb}{\mathbb{N}}
\newcommand{\Zbb}{\mathbb{Z}}
\newcommand{\diam}{\textnormal{diam}}
\newcommand{\dhaus}{d_{\textnormal{H}}}
\newcommand{\dvechaus}{\vec{d}_{\textnormal{H}}}
\newcommand{\dtrop}{d_{\textnormal{tr}}}
\newcommand{\cpt}{\textnormal{K}}
\newcommand{\supp}{\textnormal{supp}}
\newcommand{\CovNum}{\mathcal{N}}
\newcommand{\closed}{\textnormal{F}}
\newcommand{\kurouter}{\textnormal{Ls}}
\newcommand{\kurinner}{\textnormal{Li}}
\newcommand{\kurlimit}{\textnormal{Lt}}
\renewcommand{\emptyset}{\varnothing}
\newcommand{\Pcal}{\mathcal{P}}
\newcommand{\Gcal}{\mathcal{G}}
\newcommand{\Hcal}{\mathcal{H}}
\newcommand{\Bcal}{\mathcal{B}}
\newcommand{\Ucal}{\mathcal{U}}
\newcommand{\Ecal}{\mathcal{E}}
\newcommand{\comment}[1]{}
\begin{document}
	
\title[Fr\'echet mean set estimation in the Hausdorff metric]{Fr\'echet mean set estimation in the~\\Hausdorff metric, via relaxation}

\author{Mo\"ise Blanchard}
\address{Operations Research Center\\
	Massachusetts Institute of Technology,
	Cambridge, MA, USA}
\email{moiseb@mit.edu}

\author{Adam Quinn Jaffe}
\address{Department of Statistics\\
	University of California, Berkeley,
	Berkeley, CA, USA}
\email{aqjaffe@berkeley.edu}

\keywords{computational phylogenetics, Fermat-Weber point, Fr\'echet mean, Hausdorff metric, medoids, non-Euclidean statistics, random sets, stochastic optimizatione}

\date{\today}

\begin{abstract}
	This work resolves the following question in non-Euclidean statistics: Is it possible to consistently estimate the Fr\'echet mean set of an unknown population distribution, with respect to the Hausdorff metric, when given access to independent identically-distributed samples?
	Our affirmative answer is based on a careful analysis of the ``relaxed empirical Fr\'echet mean set estimators'' which identify the set of near-minimizers of the empirical Fr\'echet functional and where the amount of ``relaxation'' vanishes as the number of data tends to infinity.
	On the theoretical side, our results include exact descriptions of which relaxation rates give weak consistency and which give strong consistency, as well as a description of an estimator which (assuming only the finiteness of certain moments and a mild condition on the metric entropy of the underlying metric space) adaptively finds the fastest possible relaxation rate for strongly consistent estimation.
	On the applied side, we consider the problem of estimating the set of Fermat-Weber points of an unknown distribution in the space of equidistant trees endowed with the tropical projective metric; in this setting, we provide an algorithm that provably implements our adaptive estimator, and we apply this method to real phylogenetic data.
\end{abstract}

\maketitle


\section{Introduction}\label{sec:intro}

The goal of the field of non-Euclidean statistics is to extend classical (Euclidean) statistical theory to geometric settings which are relevant from the point of view of applications.
For example, many applied statisticians work with data that live in spaces of matrices subject to non-linear constraints \cite{dryden2009non, fiori2009learning}, quotients of Euclidean spaces by certain group actions \cite{le2000frechet,Orbifold}, spaces of graphs \cite{GinestetGraphs,UnlabeledGraphs,MayerComputation,MayerTheory}, spaces of trees \cite{BHV,TreeMean}, spaces of persistence diagrams \cite{turner2014frechet,PersistenceUniqueness}, and, perhaps most importantly, Riemannian manifolds \cite{Bhattacharya,FrechetCircle,FrechetSphere,Chakraborty_2015_ICCV,KarcherSOn,StiefelStatistics,GeomStats}.
For these settings, one has to extend or modify most classical methodologies to accommodate the new aspects of geometry.

Among the fundamental objects of study in non-Euclidean statistics is the \textit{Fr\'echet mean}, which generalizes the Euclidean notions of mean and median to general metric spaces \cite{Huckemann2015, EvansJaffeSLLN, SchoetzSLLN}.
As such, Fr\'echet means are a canonical notion of central tendency for data living in an abstract metric space.
The primary difficulty in the theoretical study of Fr\'echet means is that, for general metric spaces, the Fr\'echet mean is a set rather than a single point.

These set-valued considerations divide the existing literature on Fr\'echet means into two different parts.
Some authors avoid the technicalities of the set-valued setting by limiting their scope to only consider probability measures whose Fr\'echet mean set is a singleton \cite{Bhattacharya,ManifoldInference,FrechetCLTs,ManifoldsI,ManifoldsII};
in this ``uniqueness'' setting, one can prove many limit theorems which are very similar to their classical (Euclidean) counterparts.
Other authors embrace the generality of the set-valued setting
\cite{Ziezold, Sverdrup,EvansJaffeSLLN,SchoetzSLLN}, although much less is known in this case.
This paper strengthens the recent results  \cite{EvansJaffeSLLN,SchoetzSLLN} concerning the set-valued setting, and also has applications to the uniqueness setting.


\medskip

We briefly describe here our setting. We defer additional definitions of related objects to Fr\'echet means to Subsection~\ref{subsec:defs}.
Throughout the remainder of this section, $p\ge 1$ is a fixed exponent and $(X,d)$ is a fixed metric space with the Heine-Borel property (or simply, a \textit{HB space}), that is, such that all $d$-closed and bounded subsets are $d$-compact.
We write $\cpt(X)$ for the collection of all non-empty $d$-compact subsets of $X$. 
We let $\Pcal(X)$ be the space of Borel probability measures on $(X,d)$ and for $r\geq 0$, we let $\Pcal_{r}(X)$ be the space of all $\mu\in\Pcal(X)$ satisfying $\E_{Y\sim \mu}[d^r(x,Y)] = \int_{X}d^{r}(x,y)\, d\mu(y) < \infty$ for some $x\in X$. Roughly speaking, $\Pcal_{r}(X)$ is the space of all distributions on $X$ which have $r$ moments finite.
We next assume that $\mu\in\Pcal(X)$ is a fixed but unknown Borel probability measure on $(X,d)$ and let $Y_1,Y_2,\ldots$ be an independent identically-distributed (IID) sequence with marginal distribution $\mu$, defined on a probability space $(\Omega,\F,\P_{\mu})$. We write $\E_{\mu}, \Var_{\mu}$, $\Cov_{\mu}$ for the expectation, variance, and covariance on this space, respectively, and we remark that all ``almost sure'' statements should be understood to be with respect to $\P_{\mu}.$
With these notations at hand, we are ready to define the Fr\'echet $p$-mean of $\mu$, denoted $F_p(\mu)$. Intuitively, $F_p(\mu)$ is the set of minimizers of the following optimization problem,
\begin{equation*}
    \min_{x\in X} \, \int_X d^p(x,y) d\mu(y).
\end{equation*}
However, it is known \cite[Lemma~2.6]{EvansJaffeSLLN} that we have
\begin{equation}\label{eqn:ineq-2}
	|d^p(x,y)-d^p(x',y)|\le pd(x,x')(d^{p-1}(x,y)+d^{p-1}(x',y))
\end{equation}
for all $x,x',y\in X$. Hence, the function $y\mapsto d^p(x,y)-d^p(x',y)$ is $\mu$-integrable for $\mu\in \Pcal_{p-1}(X)$ and $x,x'\in X$.
In particular, we can define the \textit{Fr\'echet functional} $W_p:\mathcal{P}_{p-1}(X)\times X^2\to \R$ via
\begin{equation*}
	W_p(\mu,x,x') := \int_{X}(d^p(x,y)-d^p(x',y))\,d\mu(y).
\end{equation*}
For $\mu\in \Pcal_{p-1}(X)$ we then define the \textit{Fr\'echet mean set} to be
\begin{equation*}
	F_p(\mu) := \{x\in X: W_p(\mu,x,x')\le 0\textrm{ for all } x'\in X\}.
\end{equation*}
In full generality, it is possible for the Fr\'echet mean set to be empty, to be a singleton, or to be a set with more than one point.
Our goal is to construct a sequence of estimators $\hat{F}^n_p:X^n\to \cpt(X)$ for $n\in\Nbb$ such that $\hat F^n_p = \hat F^n_p(Y_1,\ldots, Y_n)\to F_p(\mu)$ as $n\to\infty$ in some useful sense.

A few reasonable senses of convergence involve the \emph{one-sided Hausdorff distance} $\dvechaus$, defined via $\dvechaus(K,K') := \max_{x\in K}d(x,K') := \max_{x\in K}\min_{x'\in K'}d(x,x')$ for $K,K'\in \cpt(X)$.
These notions of convergence are stated as
\begin{equation}\label{eqn:one-sided-Hausdorff-1}
	\dvechaus(\hat F^n_p,F_p(\mu))\to 0
\end{equation}
and
\begin{equation}\label{eqn:one-sided-Hausdorff-2}
	\dvechaus(F_p(\mu),\hat F^n_p)\to 0
\end{equation}
as $n\to\infty$, where the convergence can either be taken in probability or almost surely.
Indeed, these convergences each have a clear statistical meaning (in an asymptotic sense): the first \eqref{eqn:one-sided-Hausdorff-1} is a guarantee of ``no false positives'' since it shows that every element of $\hat F^n_p$ is close to some element of $F_p(\mu)$, and the second \eqref{eqn:one-sided-Hausdorff-2} is a guarantee of ``no false negatives'' since it shows that every element of $F_p(\mu)$ is close to some element of $\hat F^n_p$.
One may also be interested in the stronger notion of convergence where both \eqref{eqn:one-sided-Hausdorff-1} and \eqref{eqn:one-sided-Hausdorff-2} hold; this is equivalent to
\begin{equation}\label{eqn:full-Hausdorff}
	\dhaus(F_p(\mu),\hat F^n_p)\to 0
\end{equation}
almost surely as $n\to\infty$, where the quantity $\dhaus(K,K') := \max\{\dvechaus(K,K'),\dvechaus(K',K)\}$ for $K,K'\in \cpt(X)$ is called the \emph{Hausdorff metric}. 
In statistical terms, the convergence \eqref{eqn:full-Hausdorff} guarantees both ``no false positives'' and ``no false negatives''.
Thus, let us say that an estimator $\hat F^n_p$ is \textit{weakly $\dhaus$-consistent} if \eqref{eqn:full-Hausdorff} holds in probability and that it is \textit{strongly $\dhaus$-consistent} if \eqref{eqn:full-Hausdorff} holds almost surely.

We have, as our motivation, the following two reasons for studying the property of $\dhaus$-consistency:
\begin{itemize}
	\item From a methodological point of view, we are of the opinion that $\dhaus$-consistency is an important property that any reasonable estimator $\hat F^n_p$ of $F_p(\mu)$ should possess.
	Indeed, this has already been observed by previous authors (\cite[p. 1118]{HuckemannLeafs}, \cite[Remark~2.5]{SchoetzSLLN}, and \cite[p. 60]{turner2014frechet}) who sought (explicitly or implicitly) some form of $\dhaus$-consistency.
	Roughly speaking, $\dhaus$-consistency guarantees a much more useful view of the estimand, compared to existing consistency properties studied in the literature.
	\item Many existing statistical results in the literature require the a priori ``uniqueness assumption'' that the Fr\'echet mean set consists of a single point.
	This has led to the conventional wisdom that one should seek geometric conditions under which the uniqueness assumption is guaranteed to hold (see, for example, \cite{PersistenceUniqueness,FrechetCircle, Afsari}).
	Yet, such assumptions are, typically, only used to guarantee that \eqref{eqn:one-sided-Hausdorff-1} implies \eqref{eqn:one-sided-Hausdorff-2}, hence that \eqref{eqn:one-sided-Hausdorff-1} implies that $\dhaus$-consistency \eqref{eqn:full-Hausdorff}.
	Thus, there is some hope that one can bypass complicated geometric considerations by directly studying $\dhaus$-consistency.
\end{itemize}
Generally speaking, we believe $\dhaus$-consistency deserves much further study in set-valued statistics.

Let us now recall the successes and limitations of the empirical Fr\'echet mean $F_p(\bar \mu_n)$ as an estimator of the population Fr\'echet mean $F_p(\mu)$, where $\bar\mu_n := \frac{1}{n}\sum_{i=1}^{n}\delta_{Y_i}$ is the empirical measure.
On the positive side, the main result of \cite{SchoetzSLLN} shows that $F_p(\bar \mu_n)$ satisfies \eqref{eqn:one-sided-Hausdorff-1} almost surely.
On the negative side, a result of \cite{EvansJaffeSLLN} shows, for finite metric spaces $(X,d)$, that $F_p(\bar \mu_n)$ satisfies \eqref{eqn:one-sided-Hausdorff-2} in probability only if $F_p(\mu)$ is a singleton, up to some trivialities.
In particular, $F_p(\bar \mu_n)$ is not even weakly $\dhaus$-consistent in the general set-valued setting.

Yet, not all hope is lost!
It certainly seems possible to cook up some other estimator $\hat F^n_p$ which is $\dhaus$-consistent.    
The major contribution of this work is to show that this is possible and that one such estimator is not too different from the empirical Fr\'echet mean set itself.
In this sense, our work can be seen as resolving an open question in previous literature, and as allowing one  to dispense with various unnecessary uniqueness hypothesis in the literature.
Additionally, it establishes that the empirical Fr\'echet mean set is (asymptotically) inadmissible with respect to any loss based on $\dhaus$.

In order to construct such an estimator, we now define a few additional concepts. For $\mu\in\Pcal_{p-1}(X)$, one defines the \textit{relaxed Fr\'echet mean set} via
\begin{equation*}
	F_p(\mu,\varepsilon) := \{x\in X: W_p(\mu,x,x') \leq \varepsilon \textrm{ for all } x'\in X\},
\end{equation*}
where $\varepsilon\ge 0$ is an arbitrary parameter called the \textit{relaxation scale}.
In words, the relaxed Fr\'echet mean set $F_p(\mu,\varepsilon)$ consists of all of the near-minimizers of the Fr\'echet functional, up to an additive error of $\varepsilon$.
Thus, we have $F_p(\mu) = F_p(\mu,0)$; we sometimes refer to $F_p(\mu)$ as the \emph{unrelaxed Fr\'echet mean set} or the \emph{vanilla Fr\'echet mean set}.
A sequence of relaxation scales $\{\varepsilon_n\}_{n\in\Nbb}$ applied to form the empirical relaxed Fr\'echet mean sets $\{F_p(\bar \mu_n,\varepsilon_n)\}_{n\in\Nbb}$ will be referred to as a \emph{relaxation rate}, and we often simply write $\varepsilon_n$ in place of $\{\varepsilon_n\}_{n\in\Nbb}$.
By a \textit{random relaxation rate} we mean a relaxation rate $\varepsilon_n$ in which $\varepsilon_n$ is a random variable for each $n\in\Nbb$; these can of course be deterministic or independent of $Y_1,Y_2,\ldots$, but the most interesting and important examples are \emph{adaptive}, in the sense that $\varepsilon_n$ is $\sigma(Y_1,\ldots, Y_n)$-measurable for all $n\in\Nbb$.

Relaxed Fr\'echet mean sets have received a bit of attention in the recent work \cite{SchoetzSLLN}.
Indeed, \cite[Corollary~5.1 and Corollary~5.2]{SchoetzSLLN} shows that if a relaxation rate satisfies $\varepsilon_n\to 0$, then $F_p(\bar \mu_n,\varepsilon_n)$ satisfies \eqref{eqn:one-sided-Hausdorff-1} almost surely.
Interestingly, it was observed \cite[Appendix~A.3]{SchoetzSLLN} in the simple example of $X=\{0,1\},p=2$, and $\mu= \frac{1}{2}\delta_0+\frac{1}{2}\delta_1$, that choosing the relaxation rate as $\varepsilon_n := n^{-1/4}$ implies that $F_2(\bar \mu_n,\varepsilon_n)$ satisfies  \eqref{eqn:one-sided-Hausdorff-2} almost surely.
In words, applying a vanishing relaxation rate does not disturb \eqref{eqn:one-sided-Hausdorff-1}, and applying a sufficiently slowly vanishing relaxation rate allows obtaining \eqref{eqn:one-sided-Hausdorff-2}.
Of course, in order to be as efficient as possible, one wants to choose the relaxation rate which is the fastest possible among all sufficiently slow relaxation rates.
Thus, we are motivated to study the following (stated informally):

\begin{question}
	For a HB space $(X,d)$, some $p\ge 1$, and an unknown sufficiently integrable $\mu\in\Pcal(X)$, what is the fastest relaxation rate $\varepsilon_n\to 0$ for which $F_p(\bar \mu_n,\varepsilon_n)$ is a $\dhaus$-consistent estimator of $F_p(\mu)$?
\end{question}

The upshot of this work is that, in a very generic setting, we are able to provide a complete answer to the motivating question.
That is, we exactly characterize which relaxation rates give rise to (weak and strong) $\dhaus$-consistency.

\section{Main results}

In this section, we state the important results of the paper. These results point to a simple recommendation for applied researchers: When using Fr\'echet means to study non-Euclidean data in the real world, if one aims to find the whole Fr\'echet mean set, it is advantageous to apply a small bit of relaxation.
We hope that the results of this subsection will guide practitioners in making an informed choice of the relaxation scale.

To state them more precisely, we need to introduce some mild form of regularity to impose on the underlying metric space.
For example, a common assumption since the work \cite{ManifoldsI,ManifoldsII} is to take $(X,d)$ to be a HB space in order to get \eqref{eqn:one-sided-Hausdorff-1} almost surely.
It turns out that we also need some control on the metric entropy of the balls in $(X,d)$, so we write $\CovNum(\varepsilon;X,d)$ or simply $\CovNum_X(\varepsilon)$ for the minimal number of $d$-closed balls of radius $\varepsilon\ge 0$ needed to cover $X$.
This leads us to the following:

\begin{definition}\label{def:HBD}
	A metric space $(X,d)$ is called a \textit{Dudley space} if it is compact and 
	\begin{equation*}
		\int_0^\infty \sqrt{\log \CovNum_X(\varepsilon)}d\varepsilon <\infty.
	\end{equation*}
	A metric space is called a \textit{Heine-Borel-Dudley (HBD) space} if all of its closed, bounded subsets are Dudley.
\end{definition}

Let us emphasize that most metric spaces encountered in practice in non-Euclidean statistics are HBD.
Indeed, most metric spaces of interest are finite-dimensional, in the sense that $(X,d)$ admits some number $k > 0$ such that covering numbers for compact sets $K\subseteq X$ satisfy $\log \CovNum_K(\varepsilon)  \lesssim k\log (1/\varepsilon)$ as $\varepsilon\to 0$, and it can be easily shown that such spaces are HBD.
In particular, this is true whenever $X$ is a Riemannian sub-manifold of finite-dimensional Euclidean space, whenever $X$ is a quotient of Euclidean space by an isometric group action, and more.
Additionally, some infinite-dimensional metric spaces of interest in non-parametric statistics are also HBD:
for example, the space of $\alpha$-H\"older functions in dimension $d\ge 1$ endowed with the uniform metric, when $\alpha>2d$ \cite[Theorem~8.2.1]{dudley2014uniform}.

Next, we introduce some notation for probability measures.
As before, we let $Y_1,Y_2,\ldots$ be an IID sequence with marginal distribution $\mu\in \Pcal(X)$, on a probability space $(\Omega,\F,\P_{\mu})$.
All ``almost sure'' and ``in probability'' statements are implicitly understood to be taken with respect to $\P_{\mu}$. We introduce an important quantity that will appear throughout our results.
For $p\geq 1$ and $\mu$ any Borel probability measure on $(X,d)$, we define the value
\begin{equation}\label{eqn:sigma-p}
	\sigma_p(\mu) := \sqrt{2} \sup_{x,x'\in F_p(\mu)}\sqrt{\Var_{\mu}(d^p(x,Y_1)-d^p(x',Y_1))}.
\end{equation}
We note that $\sigma_p(\mu)$ is finite as long as we have $\mu\in \Pcal_{2p-2}(X)$ (Lemma~\ref{lemma:finite_sigma}), and that $\sigma_p(\mu)$ is positive in most cases, for example when $\supp(\mu) = X$ (Lemma~\ref{lem:sigma_pos}), but also much more generally.
Our main results are comprehensive for the typical case of $\sigma_p(\mu) \in (0,\infty)$, but we later point out that the story becomes more complicated when $\sigma_p(\mu) = 0$.

Now we turn to the results.
In light of the classical central limit theorem (CLT), a natural guess for a useful relaxation rate is $\Theta(n^{-1/2})$. (We refer to Subsection~\ref{subsec:defs} for the definition of Bachman-Landau symbols.)
It turns out that these rates do not even yield convergence in probability, but we can cleanly bound their asymptotic probability of error as in the following.

\begin{theorem}\label{thm:gaussian_tail_one_sided}
	Let $(X,d)$ be a HBD space, $p\geq 1$, and $\mu\in\Pcal_{2p-2}(X)$.
	Then, there exists a constant $M_p(\mu)<\infty$ 
	such that for any $c\ge M_p(\mu)$ and
	any random relaxation rate $\varepsilon_n$ satisfying $\liminf_{n\to\infty}\varepsilon_n \sqrt n \geq c$ a.s., we have
	\begin{equation*}
		\sup_{\delta>0}\limsup_{n\to\infty}\Pbb_{\mu}(\dhaus(F_p(\bar \mu_n,\varepsilon_n),F_p(\mu))\ge\delta) \leq  \exp\left(-\frac{(c-M_{p}(\mu))^2}{\sigma_{p}^2(\mu)}\right),
	\end{equation*}
	with the convention the right side is 0 if $\sigma_p^2(\mu) = 0$.
\end{theorem}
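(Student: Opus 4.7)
The strategy is to decompose the Hausdorff distance into its two one-sided pieces, handle one by the SLLN from Section~\ref{subsec:SLLN}, and reduce the other to a Gaussian supremum tail bound via the Borell--TIS inequality. Corollary~\ref{cor:mean-SLLN} gives $\dvechaus(F_p(\bar \mu_n,\varepsilon_n),F_p(\mu))\to 0$ a.s.\ (under the standard hypothesis $\varepsilon_n\to 0$), so the task reduces to bounding $\limsup_n\P_\mu(\dvechaus(F_p(\mu),F_p(\bar \mu_n,\varepsilon_n))\geq \delta)$. Since $F_p(\mu)\subseteq F_p(\bar \mu_n,\varepsilon_n)$ trivially makes this one-sided distance zero, and this containment is equivalent to $\sup_{x\in F_p(\mu),\,x'\in X}W_p(\bar \mu_n,x,x')\leq \varepsilon_n$, it suffices to bound the probability that this single inequality fails.

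To localize the inner supremum to a small compact neighborhood $K_\rho:=F_p(\mu)^\rho$ of $F_p(\mu)$ (for $\rho>0$, compact by the HB property), I combine coercivity with a uniform LLN. The coercivity argument inside the proof of Lemma~\ref{lem:Fp-cpt-opt} shows that on a full-probability event and for $R$ sufficiently large, $W_p(\bar \mu_n,x,x')\leq 0$ whenever $x\in F_p(\mu)$ and $x'$ lies outside $\overline{B_R(x_0)}$ for a fixed basepoint $x_0$. On the compact shell $\overline{B_R(x_0)}\setminus K_\rho^\circ$, continuity of $W_p(\mu,\cdot,\cdot)$ (Lemma~\ref{lem:Z-R-cts}) and compactness give $\sup_{x\in F_p(\mu),\,x'\in \overline{B_R(x_0)}\setminus K_\rho^\circ} W_p(\mu,x,x') =:-\eta<0$ for some $\eta>0$, using that $W_p(\mu,x,x')<0$ strictly when $x\in F_p(\mu)$ and $x'\notin F_p(\mu)$; a uniform LLN on this compact shell then upgrades this to $W_p(\bar \mu_n,x,x')\leq -\eta/2<0\leq \varepsilon_n$ for large $n$. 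Using $W_p(\mu,x,x')\leq 0$ for $x\in F_p(\mu)$, the failure probability is then asymptotically bounded by $\P_\mu(S_{n,\rho}>\sqrt n\,\varepsilon_n)$ where $S_{n,\rho}:=\sup_{x\in F_p(\mu),\,x'\in K_\rho}\tfrac{1}{\sqrt n}G_n(x,x')$.

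Proposition~\ref{prop:functional_CLT} together with the continuous mapping theorem gives $S_{n,\rho}\Rightarrow S_\rho:=\sup_{F_p(\mu)\times K_\rho}f$ with $f\sim \Gcal_\mu$. The Borell--TIS inequality applied to the centered Gaussian process $f$ on $F_p(\mu)\times K_\rho$ gives, for every $c'\geq \E[S_\rho]$,
\[
\P(S_\rho\geq c')\leq \exp\!\left(-\frac{(c'-\E[S_\rho])^2}{2V(\rho)}\right), \qquad V(\rho):=\sup_{x\in F_p(\mu),\,x'\in K_\rho}R_\mu(x,x',x,x').
\]
A portmanteau argument combined with $\liminf_n\sqrt n\,\varepsilon_n\geq c$ a.s.\ (lower-bounding $\sqrt n\,\varepsilon_n$ by any $c''<c$ on the full-probability event, then sending $c''\to c$ using continuity of the law of $S_\rho$) converts the weak convergence into $\limsup_n\P_\mu(S_{n,\rho}>\sqrt n\,\varepsilon_n)\leq \exp(-(c-\E[S_\rho])^2/(2V(\rho)))$ for every $c\geq \E[S_\rho]$.

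Finally, set $M_p(\mu):=\E[\sup_{x,x'\in F_p(\mu)}f(x,x')]$, which is finite by Dudley's bound for Gaussian suprema on the Dudley set $F_p(\mu)\times F_p(\mu)$ (the same metric-entropy ingredient that powers Proposition~\ref{prop:functional_CLT}). As $\rho\to 0^+$, continuity of $R_\mu$ (Lemma~\ref{lem:Z-R-cts}) and compactness give $V(\rho)\to \sigma_p^2(\mu)/2$, while a.s.\ uniform continuity of $f$ on $K_1\times K_1$ together with dominated convergence yield $\E[S_\rho]\downarrow M_p(\mu)$ (Lemma~\ref{lemma:sigma_continuity} supports the variance identification). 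Sending $\rho\to 0^+$ in the previous display produces the advertised bound $\exp(-(c-M_p(\mu))^2/\sigma_p^2(\mu))$ for every $c\geq M_p(\mu)$. The principal obstacle is the compactification step, where coercivity (to truncate $x'$ at a large ball), a uniform LLN on the middle shell, and strict negativity of $W_p(\mu,\cdot,\cdot)$ must be combined uniformly in $x\in F_p(\mu)$; secondarily, the limit $\rho\to 0^+$ relies on the equicontinuity of the Gaussian process afforded by Lemma~\ref{lem:RKHS-ball-cpt}.
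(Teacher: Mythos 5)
Your proposal is correct and follows essentially the same architecture as the paper's proof: the one-sided SLLN (Corollary~\ref{cor:mean-SLLN}) disposes of $\dvechaus(F_p(\bar\mu_n,\varepsilon_n),F_p(\mu))$, and the containment $F_p(\mu)\subseteq F_p(\bar\mu_n,\varepsilon_n)$ is reduced to a Gaussian supremum tail via the functional CLT (Proposition~\ref{prop:functional_CLT}), the Borell--TIS inequality on a small thickening of $F_p(\mu)$, a portmanteau step absorbing $\liminf_n \varepsilon_n\sqrt n\ge c$, and finally shrinking the thickening using continuity of $R_\mu$ and a.s.\ continuity of the limit process, with $M_p(\mu)$ finite by Dudley's entropy bound — all exactly as in the paper. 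The one substantive difference is how you localize the supremum over $x'\in X$ to the compact $K_\rho$: you truncate by hand via the coercivity estimates of Lemma~\ref{lem:Fp-cpt-opt}, a uniform LLN for $W_p(\bar\mu_n,\cdot,\cdot)$ on the compact shell, and strict negativity of $W_p(\mu,x,\cdot)$ off $F_p(\mu)$; this works, but it forces you to prove a uniform LLN and rerun the coercivity bounds with a fixed radius, neither of which is stated in the paper. The paper sidesteps all of this with a cheaper observation: to verify $x\in F_p(\bar\mu_n,\varepsilon_n)$ it suffices to test $W_p(\bar\mu_n,x,x')$ against $x'\in F_p(\bar\mu_n,\varepsilon_n)$ itself (that set contains the empirical minimizers, where the supremum over $x'\in X$ is attained), and the same SLLN already gives $\P_\mu(F_p(\bar\mu_n,\varepsilon_n)\subseteq K^\delta)\to 1$, so control of $G_n$ on $K^\delta\times K^\delta$ is enough. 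Finally, note that your appeal to the SLLN for the first one-sided distance tacitly uses $\varepsilon_n\to 0$, which mirrors the paper's own use of Corollary~\ref{cor:mean-SLLN} at that step, so this is not a discrepancy with the paper's argument.
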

 
The proof is given in Appendix~\ref{sec:proof_general_results}.
For deterministic relaxation rates $\varepsilon_n = cn^{-1/2}$, the preceding result shows that a certain asymptotic probability of error for the relaxed Fr\'echet mean set estimators can be made arbitrarily small by making the pre-factor $c$ arbitrary large, and that the decay of the asymptotic error probabilities is at most Gaussian-like.
Although the result does not include a lower bound on the left-hand side, we later prove (Theorem \ref{thm:weak_convergence}) in the case of $\sigma_p(\mu)>0$ that a deterministic relaxation rate $\varepsilon_n$ is weakly consistent if and only if $\varepsilon_n\in o(1)\cap\omega(n^{-1/2})$. Hence, relaxation scales in $\Theta(n^{-1/2})$ are generally not slow enough to get even weak $\dhaus$-consistency.

Now we move on to our second main result.
Because of the previous discussion, it is necessary to choose some relaxation rate which is slightly slower than $\Theta(n^{-1/2})$.
It turns out that a logarithmic correction is exactly what is needed so that the pre-factor separates the strongly $\dhaus$-consistent regime from the not strongly $\dhaus$-consistent regime.

\begin{theorem}\label{thm:consistency}
	Let $(X,d)$ be a HBD space, $p\geq 1$, and $\mu\in\Pcal_{2p-2}(X)$.
	Then, any random relaxation rate $\varepsilon_n$ satisfies the following:
	\begin{itemize}
		\item[(i)] If
		\begin{equation*}
			\liminf_{n\to\infty} \varepsilon_n\sqrt{\frac{n}{\log\log n}} > \sigma_p(\mu) \quad \textrm{ and }\quad \lim_{n\to\infty}\varepsilon_n=0 \quad \textrm{a.s.,}
		\end{equation*}
		then $F_p(\bar \mu_n,\varepsilon_n)$ is strongly $\dhaus$-consistent, and $\P(F_p(\mu) \subseteq F_p(\bar \mu_n,\varepsilon_n) \textrm{ for suff. large } n\in\Nbb) =1$.
		\item[(ii)] If
		\begin{equation*}
			\limsup_{n\to\infty} \varepsilon_n\sqrt{\frac{n}{\log\log n}} < \sigma_p(\mu) \quad \textrm{a.s.,}
		\end{equation*}
		then $F_p(\bar \mu_n,\varepsilon_n)$ is not strongly $\dhaus$-consistent.
		In fact, in this case it is strongly $\dhaus$-inconsistent, in the sense that $\P(\dvechaus(F_p(\mu),F_p(\bar\mu_n,\varepsilon_n))\to 0 \textrm{ as } n \to \infty) = 0$.
	\end{itemize}
\end{theorem}

The proof is given in Section~\ref{sec:proofs}. For the deterministic relaxation rates $\varepsilon_n = cn^{-1/2}(\log\log n)^{1/2}$, the preceding result guarantees:
\begin{itemize}
	\item[(i)] If $c>\sigma_p(\mu)$ then $F_p(\bar \mu_n,\varepsilon_n)$ is strongly $\dhaus$-consistent.
	\item[(ii)] If $c<\sigma_p(\mu)$ then $F_p(\bar \mu_n,\varepsilon_n)$ is not strongly $\dhaus$-consistent.
\end{itemize}
In words, this means that the strong $\dhaus$-consistency for relaxed empirical Fr\'echet mean set estimators experiences a sharp transition at the relaxation rate $\varepsilon_n = \sigma_p(\mu) n^{-1/2}(\log\log n)^{1/2}$. 
The ``critical'' relaxation rate of $\varepsilon_n = \sigma_p(\mu) n^{-1/2}(\log\log n)^{1/2}$ appears to be more complicated, and we do not know what behavior to expect in this case.

Let us now summarize and compare the results thus far.
In light of Theorem~\ref{thm:gaussian_tail_one_sided}, it is natural to consider the \textit{weak asymptotic error probability} for each population distribution $\mu\in \Pcal_{2p-2}(X)$ and each relaxation rate $\varepsilon_n$ defined via
\begin{equation*}
	\textrm{WE}_{\mu}(\varepsilon_n) = \sup_{\delta>0}\limsup_{n\to\infty}\P_{\mu}(\dhaus(F_p(\bar \mu_n,\varepsilon_n),F_p(\mu))\ge \delta).
\end{equation*}
Similarly, in light of Theorem~\ref{thm:consistency}, it is natural to consider the \textit{strong asymptotic error probability} for each $\mu\in \Pcal_{2p-2}(X)$ and each $\varepsilon_n$ defined via
\begin{equation*}
	\textrm{SE}_{\mu}(\varepsilon_n) = 1- \P_{\mu}\left(\dhaus(F_p(\bar \mu_n,\varepsilon_n),F_p(\mu)) \underset{n\to\infty}{\longrightarrow} 0\right).
\end{equation*}
(Note that $\textrm{WE}_{\mu}(\varepsilon_n)$ and $\textrm{SE}_{\mu}(\varepsilon_n)$ are shorthand for $\textrm{WE}_{\mu}(\{\varepsilon_n\}_{n\in\Nbb})$ and $\textrm{SE}_{\mu}(\{\varepsilon_n\}_{n\in\Nbb})$, respectively.)
Figure \ref{fig:asymp-error-probs} summarizes the results of Theorem \ref{thm:gaussian_tail_one_sided} and Theorem \ref{thm:consistency} in terms of these weak and strong error quantities.

\begin{figure}
    \begin{tikzpicture}
		\draw[ultra thick, -latex] (7,0) -- (7+5,0);
		\draw[ultra thick, -latex] (7,0) -- (7,3);
		
        \draw[ultra thick] (7-0.2,2) node[left]{$1$} -- (7+0.2,2);
        \draw[ultra thick] (7-0.2,0) node[left]{$0$} -- (7+0.2,0);
        \node at (7+5,-0.4) {$c$};
        \node at (7+2.5,-0.4) {$\sigma_p(\mu)$};

        \draw[ultra thick, color=blue, -o] (7,2) -- (7+2.68,2);
        \node[color=blue] at (7+4,1.4) {$\textrm{SE}_{\mu}\left(c\sqrt{\frac{\log\log n}{n}}\right)$};
	\draw[ultra thick, color=blue, o-latex] (7+2.4,0) -- (7+5,0);
        \draw[thick, dashed] (7+2.55,1.9) -- (7+2.55,0.1);
        \draw[thick, dashed] (7+2.55,2.5) -- (7+2.55,2.1);

		\draw[ultra thick, name path=xaxis1] (0,0) -- (2,0);
            \draw[ultra thick, -latex, name path=xaxis2] (2,0) -- (5,0);
		\draw[ultra thick, -latex] (0,0) -- (0,3);
		
        \draw[ultra thick] (-0.2,2) node[left]{$1$} -- (0.2,2);
        \draw[ultra thick] (-0.2,0) node[left]{$0$} -- (0.2,0);
        \node at (5,-0.4) {$c$};

        \node at (2,-0.4) {$M_p(\mu)$};

        

        \draw[opacity=0, -latex, name path=line] (0,2) -- (2,2);

        \node[color=red] at (4,1.4) {$\textrm{WE}_{\mu}\left(\frac{c}{\sqrt n}\right)$};
        
        \draw[opacity=0, -latex, domain=2:5, smooth, variable = \x, name path = gaussian] plot ({\x}, {2*exp(-0.5*(\x-2)*(\x-2)});

        \draw[ultra thick, color=red, -, domain=0:1, smooth, variable = \x] plot ({\x}, {0.5+(\x-1)^4});
        \draw[ultra thick, -, color=red, domain=1:2.5, smooth, variable = \x] plot ({\x}, {0.5});
        \draw[ultra thick, -, color=red, domain=2.5:3, smooth, variable = \x] plot ({\x}, {0.5-0.5*(\x-2.5)^2});
        \draw[ultra thick, -latex, color=red, domain=3:5, smooth, variable = \x] plot ({\x}, {0.375*exp(-1.5*(\x-3))});
        \draw[thick, dashed] (2,2.5) -- (2,0);

        \tikzfillbetween[of=line and xaxis1]{red, opacity=0.1};
        \tikzfillbetween[of=gaussian and xaxis2]{red, opacity=0.1};
	\end{tikzpicture}

    \caption{For $\mu\in\Pcal_{2p-2}(X)$, we plot the asymptotic error probabilities of the empirical relaxed Fr\'echet mean set estimators,	as a function of the pre-factor $c\ge 0$, when the relaxation rate is taken to be $cn^{-1/2}$ (left) or $cn^{-1/2}(\log \log n)^{1/2}$ (right).
    By Theorem~\ref{thm:gaussian_tail_one_sided}, we know that $c\mapsto\textrm{WE}_{\mu}(cn^{-1/2})$ decays at least as quickly as a Gaussian; the red region depicts the bound on the error, and the red curve depicts a cartoon of the weak error probability.
    By Theorem~\ref{thm:consistency}, we know that $c\mapsto \textrm{SE}_{\mu}(cn^{-1/2}(\log \log n)^{1/2})$ decays abruptly; the blue curve depicts the exact form of the strong error rate.}
	\label{fig:asymp-error-probs}
\end{figure}

With these results in hand, it still remains to show that one can $\dhaus$-consistently estimate the Fr\'echet mean set of the unknown population distribution $\mu$.
As a first guess, one may simply try to use the adaptive relaxation rate $\varepsilon_n := \sigma_p(\bar \mu_n)n^{-1/2}(\log \log n)^{1/2}$, or perhaps $\varepsilon_n := (1+\delta)\sigma_p(\bar \mu_n)n^{-1/2}(\log \log n)^{1/2}$ for any $\delta > 0$.
However, it can be shown in simple examples that the convergence $\sigma_p(\bar \mu_n) \to \sigma_p(\mu)$ can plainly fail.
Thus, this adaptive rate does not lead to strong consistency.

Instead, one has to be more careful about the method of estimating $\sigma_p(\mu)$.
For a better choice of estimator, let us introduce a relaxed version of \eqref{eqn:sigma-p} given as
\begin{equation*}
	\sigma_p(\mu,\varepsilon) := \sqrt{2}\sup_{x,x'\in F_p(\mu,\varepsilon)}\sqrt{\Var_{\mu}(d^p(x,Y_1)-d^p(x',Y_1))}
\end{equation*}
for $\mu\in \Pcal_{2p-2}(X)$ and $\varepsilon\ge 0$.
Then we have the following:

	\begin{theorem}\label{thm:adaptive-consistency}
		Let $(X,d)$ be a HBD space, $p\geq 1$, and $\mu\in \Pcal_{2p-2}(X)$.
		For any random relaxation rate $\varepsilon_{1,n}$ satisfying
		\begin{equation*}
			\lim_{n\to\infty}\varepsilon_{1,n} \sqrt{\frac{n}{\log\log n}} = \infty \quad \text{and} \quad \lim_{n\to\infty}\varepsilon_{1,n} = 0\quad \textrm{a.s.},
		\end{equation*}
		and for any $\delta > 0$, define the adaptively-chosen relaxation rate $\varepsilon_{2,n,\delta}$ via
		\begin{equation*}
			\varepsilon_{2,n,\delta}:=(1+\delta)\sigma_p\left(\bar \mu_n,\varepsilon_{1,n}\right)\sqrt{\frac{\log \log n}{n}}.
		\end{equation*}
		Then, $F_p(\bar \mu_n,\varepsilon_{2,n,\delta})$ is strongly $\dhaus$-consistent and we have
		\begin{equation*}
			\lim_{\delta \to 0}\lim_{n\to\infty} \varepsilon_{2,n,\delta} \sqrt{\frac{n}{\log \log n}} = \sigma_p(\mu) \quad \textrm{a.s.}.
		\end{equation*}
	\end{theorem}

The proof is given in Section~\ref{sec:proofs}. The preceding result motivates the construction of a certain ``two-step'' estimator of the population Fr\'echet mean set:
First, we compute an initial estimate of the Fr\'echet mean set using any suboptimal but consistent relaxation rate.
Second, we compute a better estimate of the Fr\'echet mean set using the optimal rate, where the pre-factor is determined from the initial estimate of the first step.
The result then shows that this two-step estimator is strongly $\dhaus$-consistent, and that it adaptively finds the fastest possible relaxation rate for strongly $\dhaus$-consistent estimation; in this sense, it is the optimal relaxation-based estimator.

We now comment on the computational aspects of our theoretical results.
First of all, we note that, while there are many known algorithms and heuristics to compute at least one point of the Fr\'echet mean set in many particular metric spaces of interest \cite{MayerComputation,TreeOptimality,Chakraborty_2015_ICCV,FrechetCircle,FrechetSphere,KarcherSOn,GeomStats}, it is much more difficult to compute the entire Fr\'echet mean set.
Second of all, we note that computing $\sigma_p(\mu)$ or its empirical relaxed counterpart $\sigma_p(\bar \mu_n,\varepsilon)$ is typically very difficult.
As such, it seems difficult in general to construct an explicit algorithm which provably implements the two-step estimator of Theorem~\ref{thm:adaptive-consistency}.
Nonetheless, we are aware of one explicit setting in which all of the computational considerations are resolved: the space of phylogenetic trees endowed with the tropical projective metric \cite{YoshidaLin, LinMonodYoshida, BarnhillYoshida}.
In Section~\ref{sec:phylo} we detail an extended analysis of the problem in this setting, and we define a practical adaptive-relaxed estimation procedure in Algorithm~\ref{alg:phylo-estimator}.
We believe an interesting direction of future study is to understand in which other metric spaces of interest one is able to implement such adaptive procedures.

As we previously remarked, our results provide exact characterizations of (weak and strong) $\dhaus$-consistency in the  case of $\sigma_p(\mu) > 0$.
However, in the case of $\sigma_p(\mu) = 0$, our results provide only sufficient conditions for $\dhaus$-consistency.
For example, suppose that $\sigma_p(\mu) = 0$; then, Theorem~\ref{thm:gaussian_tail_one_sided} shows that all rates in $\Omega(n^{-1/2})$ are weakly $\dhaus$-consistent (it can also be shown that we have $M_p(\mu)=0$ in this case), and Theorem~\ref{thm:consistency} shows that all rates in $\Omega(n^{-1/2}(\log \log n)^{1/2})$ are strongly $\dhaus$-consistent.
However, it turns out that neither of these conditions is necessary:
To illustrate this point (and to show that a totally general characterization of weak and strong $\dhaus$-consistency is likely very difficult to find), we provide Proposition~\ref{prop:example_square} and Proposition~\ref{prop:ex-two-point} which both satisfy $\sigma_{p}(\mu) = 0$ but where the boundaries between the consistent and inconsistent regimes exhibit very different behaviors.

Let us briefly comment on the method of proof of these results.
First of all, our results make heavy use of functional limit theorems; specifically, we need to find and apply a sufficiently powerful functional central limit theorem (fCLT) for Theorem~\ref{thm:gaussian_tail_one_sided} and a sufficiently powerful functional law of the iterated logarithm (fLIL) for Theorem~\ref{thm:consistency}.
Second of all, our results require a strengthened form of the (one-sided) Fr\'echet mean strong law of large numbers which was previously studied in \cite{EvansJaffeSLLN,SchoetzSLLN}.
We establish this strengthened result in Section~\ref{subsec:SLLN}, and we believe it will be of independent interest in non-Euclidean statistics and beyond.
Notably, this sharpened SLLN result implies the one-sided SLLN for $p$-medoids under the minimal assumption of a finite $(p-1)$th moment (Theorem~\ref{thm:medoid-SLLN}), which resolves the open question stated in \cite[Remark~2.11]{EvansJaffeSLLN}.

\medskip


There are several natural questions remaining at the end of this work.
Let us briefly review some of them, hoping that they will inspire similar work in the future.

First, our main results require $2p-2$ moments in the population distribution while the one-sided results of \cite{EvansJaffeSLLN,SchoetzSLLN} require only $p-1$ moments in the population distribution.
We believe that this is not an artifact of our proof, and that, if the $2p-2$ moments assumption is removed, then the requisite relaxation rates for weak and strong $\dhaus$-consistency will change.
To see this, note that our $2p-2$ moments assumption is used primarily to guarantee that the fLIL holds; and that even the classical one-dimensional LIL is known \cite{FellerLIL} to behave somewhat wildly when finite variance is not assumed.

Second, we note that it is also natural to consider relaxation rates in the even slower class $\Theta(n^{-1/2}(\log n)^{1/2})$.
From some heuristic calculations, we believe that such rates may lead to finite-sample concentration bounds for relaxed empirical Fr\'echet mean set estimators.
However, we expect that the finite-sample concentration of the $cn^{-1/2}(\log n)^{1/2}$-relaxed empirical Fr\'echet mean set estimators will be such that the choice of pre-factor $c > 0$ depends intricately on the geometry of $(X,d)$.
This should be contrasted with the case of relaxation rates in $\Theta(n^{-1/2}(\log \log n)^{1/2})$, where the dependence of the consistency of the $cn^{-1/2}(\log \log n)^{1/2}$-relaxed empirical Fr\'echet mean set estimators depends somewhat coarsely on the geometry of $(X,d)$.

Third, we pose the open question of comparing the two-step estimator with respect to the unrelaxed empirical Fr\'echet mean set estimator, in the uniqueness setting.
As the experiments in Section~\ref{sec:phylo} below show, the two-step estimator provides a quite conservative estimate of the population Fr\'echet mean set.
As such, it would be of interest to try to prove a rigorous notion which quantifies its ``inefficiency'' if uniqueness is assumed a priori.

Finally, we emphasize that the analysis used to prove our main results will likely be applicable to other settings.
Indeed, consider the case of non-convex stochastic optimization where the objective function is a sum of IID objectives.
According to \cite[Theorem~7.31(c)]{RockafellarWets}, there always exists a sufficiently slow $\varepsilon_n\to 0$ such that the set of $\varepsilon_n$-relaxed empirical minimizers converges (in a particular sense) to the set of unrelaxed population minimizers; while their result is totally deterministic, our results suggest that the randomness imposes a remarkable degree of regularity on the problem, in the sense that the fastest possible sufficiently slow relaxation rate should generically be $\Theta(n^{-1/2}(\log \log n)^{1/2})$.
We welcome subsequent work which makes this idea precise.

\section{Application to phylogenetics}
\label{sec:phylo}

In this section we detail an extended application of our theory to the problem of statistical inference with data coming in the form of trees.
Such data sets have become ubiquitous in phylogenetics, and there has been a huge development of mathematical theory for inference in such data sets \cite{BHV, TreeMean, TreeOptimality, BarnhillYoshida, YoshidaKNN, TropPhylo, YoshidaLin, LinMonodYoshida}.
Initial theory mainly focused on the Billera-Holmes-Vogtmann (BHV) treespace, although, due to computational difficulties, many authors have recently explored alternative geometries on tree spaces.

The works \cite{BarnhillYoshida, YoshidaKNN, TropPhylo, YoshidaLin} endow a suitable space of trees with the ``tropical projective metric'' and they consider the fundamental notion of central tendency to be the ``Fermat-Weber set'' which is equivalent to what we in this work call the Fr\'echet 1-mean set.
This perspective has significant computational advantages, but it also has some conceptual disadvantages in that the Fermat-Weber point appears to be non-unique in many applications.
Presently, we specialize our general results to this setting, we show that one can exactly implement an algorithm achieving the optimal relaxation rate, and we demonstrate the advantage of this approach on real and simulated data. All proofs of this section are deferred to Appendix~\ref{sec:phylogenomic}.
All figures of this section can be reproduced from the \texttt{Python} code available online at \href{https://github.com/moiseb/Relaxed_Frechet_Sets/}{\texttt{https://github.com/moiseb/Relaxed\_Frechet\_Sets/}} which provides an implementation of our main algorithm.

To begin, let us describe our basic objects, following the presentation of \cite{YoshidaLin}.
For $k\in\Nbb$, we write $\R^k/\R\mathbf{1}$ for the Euclidean space $\R^k$ quotiented by the action of $\R\mathbf{1}$, where we define $c\mathbf{1}\cdot (x_1,\ldots, x_k) := (x_1+c,\ldots, x_k+c)$ for $c\in\R$ and $(x_1,\ldots, x_k)\in\Rbb^k$; in other words, $\R^k/\R\mathbf{1}$ represents Euclidean space, modulo diagonal translations.
For $x = (x_1,\ldots, x_k)$ and $x' = (x_1',\ldots, x_k')$ in $\R^k$ we define
\begin{equation*}
    \dtrop(x,x') := \max\{x_i-x_i': 1\le i \le k\} - \min\{x_i-x_i': 1\le i \le k\}.
\end{equation*}
It turns out that $\dtrop$ is invariant under the action of $\Rbb\mathbf{1}$ on $\Rbb^k$ and that $\dtrop$ descends to a metric on $\Rbb^k/\Rbb\mathbf{1}$; by a slight abuse of notation, we also write $\dtrop$ for the induced metric on $\Rbb^k/\Rbb\mathbf{1}$, called the \textit{tropical projective metric}.
The authors of \cite{LinMonodYoshida, TropPhylo} show that $(\Rbb^k/\Rbb\mathbf{1},\dtrop)$ is a natural metric for phylogenetic inference.
They also show that $F_1$, called the set of \textit{Fermat-Weber points}, provides the natural notion of central tendency in this space, and also that the set of Fermat-Weber points is a polytope.
In particular, it can be exactly computed with the help of standard existing software.

Our theory specializes nicely to this case of $(X,d) = (\Rbb^k/\Rbb\mathbf{1},\dtrop)$ and $p=1$:

\begin{lemma}\label{lemma:tropical_HBD}
The space $(\Rbb^k/\Rbb\mathbf{1},\dtrop)$ is a HBD space.
\end{lemma}

On the statistical side of things, suppose $Y_1,Y_2,\ldots$ are IID data coming from an unknown population distribution $\mu\in\Pcal(\Rbb^k/\Rbb\mathbf{1})$, and, as before, write $(\Omega,\F,\P_{\mu})$ for the ambient probability space.
Theorem~\ref{thm:consistency} shows that there exist relaxed Fermat-Weber set estimators that are strongly $\dhaus$-consistent for estimating the population Fermat-Weber set, but, since the choice of pre-factor depends on the unknown population distribution, it is desirable instead to invoke Theorem~\ref{thm:adaptive-consistency} which shows that adaptively-chosen relaxation rates can be asymptotically optimal.

These considerations lead us to Algorithm~\ref{alg:phylo-estimator}, about which it is useful to make a few remarks.
First, the two procedures \textsc{FermatWeberValue} and \textsc{FermatWeberSet} that are used in Algorithm~\ref{alg:phylo-estimator} are straightforward adaptations of the results of \cite{YoshidaLin} which focus on the case of no relaxation.
Second, the multi-step nature of \textsc{AdaptRelaxFermatWeberSet} can be seen as a natural extension of the two-step nature of Theorem~\ref{thm:adaptive-consistency}; while two steps are sufficient to guarantee an asymptotically optimal pre-factor, we will see that multiple steps also satisfy asymptotic optimality while (empirically) increasing non-asymptotic performance. Third, while the computation of the adaptive pre-factor from Theorem~\ref{thm:adaptive-consistency} can, in general, be a difficult task, it turns out that, for this particular setting, the computation of $c_{s,n}$ in line 22 yields an upper bound which is asymptotically tight as $n\to\infty$.

\begin{algorithm}[t]
	\caption{In the tropical projective space $(\Rbb^{k}/\Rbb\mathbf{1},\dtrop)$, we define a procedure \textsc{AdaptRelaxFermatWeberSet} which (by Theorem~\ref{thm:phylo-alg}) adaptively finds the optimal relaxation rate for strongly $\dhaus$-consistent estimation.
    Here, we require two procedures from linear programming and polyhedral geometry, namely, \textsc{ExtremePoints} which takes as input a polyhedron and outputs its set of extreme points, and \textsc{ConvexHull} which takes as input a set of points and returns the subset corresponding to the extreme points of their convex hull. }\label{alg:phylo-estimator}
\linespread{1.0}\selectfont

	\begin{algorithmic}[1]
            \Procedure{FermatWeberValue}{}
    		\State \textbf{input:} data $Y_1,\ldots, Y_n\in \Rbb^{k}$
    		\State \textbf{output:} optimal objective $m_1(\bar \mu_n)\ge 0$
            \State $m_n \leftarrow \textbf{minimize} \,\frac{1}{n}\sum_{l=1}^n c_l$ \textbf{over} $(v,c)\in\Rbb^{k\times n}$
            \textbf{with} $c_l \geq v_i-v_j - (Y_l)_i + (Y_l)_j,  l\in[n],\; i,j\in[k]$
            \State \textbf{return} $m_n$
    	\EndProcedure

            \vspace{1mm}
            
            \Procedure{FermatWeberSet}{}
    		\State \textbf{input:} data $Y_1,\ldots, Y_n\in \Rbb^{k}$ and relaxation scale $\varepsilon_n \ge 0$
    		\State \textbf{output:} extreme point set $\{v_j: j\in I_n\}$ of $F_1(\bar \mu_n,\varepsilon_n)$
            \State $m_n \leftarrow$ \textsc{FermatWeberValue}$(Y_1,\ldots, Y_n)$
            \State $S\leftarrow \{(v,c)\in\Rbb^k\times\Rbb^n:\frac{1}{n}\sum_{l=1}^n c_l \leq m_n + \varepsilon_n,\; c_l \geq v_i-v_j - (Y_l)_i + (Y_l)_j,\; l\in[n],\; i,j\in[k] \}$
            \State $\{(v_j,c_j)\in \Rbb^k\times\Rbb^n: j\in I_n\} \leftarrow$ \textsc{ExtremePoints}$(S)$
            \State \textbf{return} $\{v_j: j\in I_n\}$
    	\EndProcedure

            \vspace{1mm}
  
		\Procedure{AdaptRelaxFermatWeberSet}{}
		\State \textbf{input:} data $Y_1,\ldots, Y_n\in \Rbb^{k}$ and small constant $\delta>0$
		\State \textbf{output:} subset $F_n^{\ast}\subseteq \Rbb^{k}$
		
		\State $m_n \leftarrow$ \textsc{FermatWeberValue}$(Y_1,\ldots, Y_n)$
        \State $\varepsilon_{1,n,\delta} \leftarrow m_n n^{-1/2}\log\log n$
        \State $s\leftarrow 0$
        \Repeat
                \State $s \leftarrow s+1$
                
		      \State $\{v_{s,j}: j\in I_{s,n}\} \leftarrow$ \textsc{FermatWeberSet}$(Y_1,\ldots, Y_n,\varepsilon_{s,n,\delta})$
                \State $c_{s,n} \leftarrow$ \textbf{maximize} $(\frac{1}{n}\sum_{i=1}^{n}|\dtrop(v_{s,j},Y_i)-\dtrop(v_{s,j'},Y_i)|^2)^{1/2}$ \textbf{over} $j,j'\in I_{s,n}$
                \State $\varepsilon_{s+1,n,\delta} \leftarrow (2+\delta)^{1/2} c_{s,n}n^{-1/2}(\log\log n)^{1/2}$ 
        \Until{$\varepsilon_{s+1,n,\delta} \geq \varepsilon_{s,n,\delta}$}

		\State \textbf{return} $\textsc{ConvexHull}(\{v_{s,j}: j\in I_{s,n}\})$
		\EndProcedure
	\end{algorithmic}
\end{algorithm}

Our main result in this setting is the following.

\begin{theorem}\label{thm:phylo-alg}
For any $\mu\in\Pcal(\Rbb^k/\Rbb\mathbf{1})$, the procedure $\textsc{AdaptRelaxFermatWeberSet}(Y_1,\ldots, Y_n)$ is a strongly $\dhaus$-consistent estimator of $F_1(\mu)$.
Moreover, the terminal relaxation rate $\varepsilon_{s,n,\delta}$  is asymptotically optimal in that it satisfies
\begin{equation*}
    \lim_{\delta\to 0}\lim_{n\to\infty}\varepsilon_{s,n,\delta}\sqrt{\frac{n}{\log \log n}} = \sigma_1(\mu) \quad \textnormal{a.s.}.
\end{equation*}
\end{theorem}

Notice that the preceding result does not make any integrability assumption about $\mu$.
This suggests that Fermat-Weber sets and their relaxations are desirable from the point of view of robust non-Euclidean statistics. 

It is also useful to mention some heuristic modifications which can be applied to Algorithm~\ref{alg:phylo-estimator}; while these modifications render our optimality and consistency proofs invalid, we find that they make no difference in practice.
One modification concerns line 12:
Instead of returning the set $\{v_j: j\in I_n\}$, one can return the set $\textsc{ExtremePoints}(\textsc{ConvexHull}(\{v_j: j\in I_n\}))$.
This shrinks the resulting set of vertices, which significantly improves the speed of the combinatorial optimization problem in line 22.
Another modification concerns line 23: We find that one can simply take $\delta = 0$.

While the full details of the proof of Theorem~\ref{thm:phylo-alg} are given in Appendix~\ref{sec:phylogenomic}, let us give some intuition for the computation.
The key is the following, which shows that, on the Fermat-Weber set of a distribution, distances to points from the support are affine functionals:

\begin{lemma}\label{lemma:linear_frechet_mean}
    For $\mu\in\Pcal(\Rbb^k/\Rbb\mathbf 1)$ and $z\in\supp(\mu)$, the map $\dtrop(\,\cdot\,, z):F_1(\mu)\to \R$ is affine.
\end{lemma}

Thanks to this result, we can prove that the definition of $\sigma_1(\mu)$ is exactly maximizing a convex function over a compact convex set, hence it can be exactly computed given the extreme points of $F_1(\mu)$.
For example, given the empirical Fr\'echet mean set $F_1(\bar\mu_n)$ and the set of its extreme points $E_n = \textsc{ExtremePoints}(F_1(\bar\mu_n))$, we have exactly
\begin{equation*}
    \sigma_1(\bar\mu_n) = \sqrt{\max_{v,v'\in E_n} \frac{2}{n}\sum_{i=1}^n  (\dtrop( v, Y_i)-\dtrop( v', Y_i))^2}.
\end{equation*}
We use a similar strategy to compute an asymptotically tight upper bound on $\sigma_1(\bar\mu_n,\varepsilon_{s,n,\delta})$.

In the next two subsections, we explore the empirical performance of relaxation-based Fermat-Weber sets on simulated (Subsection~\ref{subsec:toy}) and real (Subsection~\ref{subsec:flu}) data.

\subsection{Simulated data}\label{subsec:toy}

We consider the tropical projective space $\Rbb^3/\Rbb\mathbf{1}$.
Because of the quotient, every point $\Rbb^3/\Rbb\mathbf{1}$ can be represented by a point in $\R^2$, if we simply set the first coordinate to be equal to zero.
Hence, in what follows, we identify $\Rbb^3/\Rbb\mathbf{1}$ with $\Rbb^2$.

Now we consider the example given in \cite[Example~7]{YoshidaLin}, where the population distribution is
\begin{equation*}
    \mu = \frac{1}{3}\delta_{(0,0,0)} + \frac{1}{3}\delta_{(0,3,1)} + \frac{1}{3}\delta_{(0,2,5)},
\end{equation*}
and for which the Fermat-Weber set is the triangle with vertices $(0,1,1),(0,2,1),$ and $(0,2,2)$.
As before, we let $Y_1,Y_2,\ldots$ denote IID samples from $\mu$, and we consider the question of estimating $F_1(\mu)$ from the data $Y_1,\ldots, Y_n$ alone.

In Figure~\ref{fig:toy-ex}, we consider 3 possible estimators, computed on simulated data for number of samples equal to $n\in\{50,100,200,500,1000,2000\}$.
The first is the unrelaxed Fermat-Weber set.
The second is the relaxed Fr\'echet mean set estimator with optimal relaxation rate $2\cdot 3^{-1/2}\cdot n^{-1/2}(\log \log n)^{1/2}$; note that we can exactly compute the critical pre-factor to be $\sigma_1(\mu) = 2\cdot 3^{-1/2}$ since in this case we know the population distribution, but that in general we do not have access to this.
The third is the estimator \textsc{AdaptRelaxFermatWeberSet} from Algorithm~\ref{alg:phylo-estimator} which, by Theorem~\ref{thm:phylo-alg}, adaptively finds the asymptotically optimal relaxation rate.

\begin{figure}
    \centering
    \includegraphics[scale=0.45]{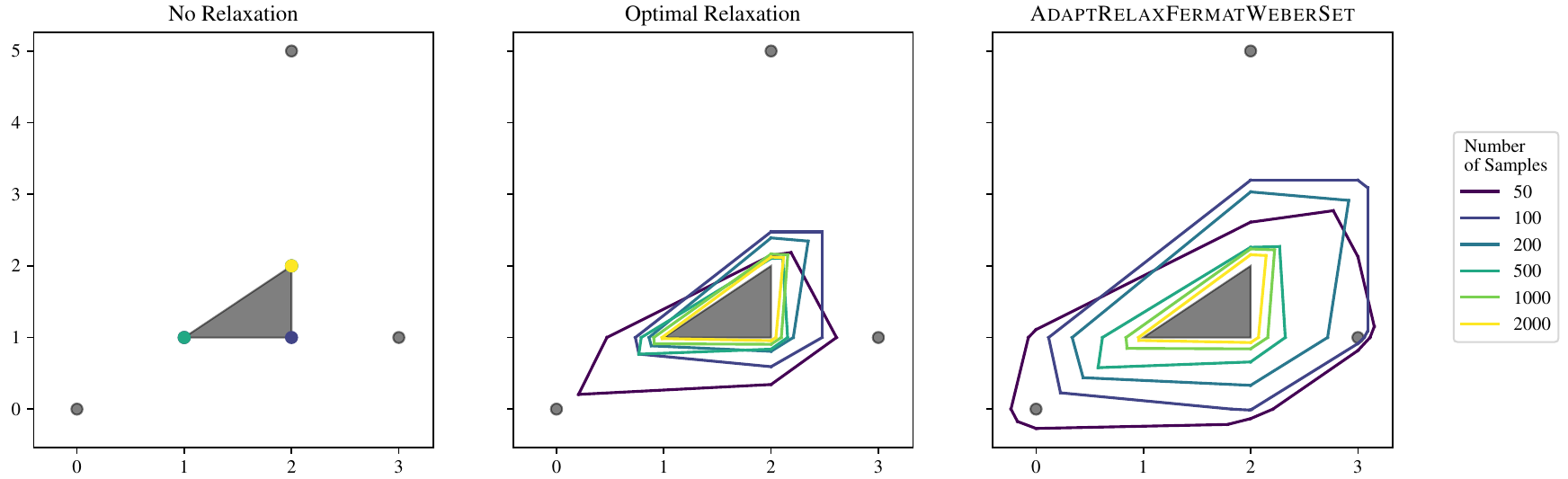}
    \caption{Comparison of Fermat-Weber set estimators for the toy example of Subsection~\ref{subsec:toy}. The unrelaxed Fermat-Weber set is typically a singleton (left), the optimally-relaxed Fermat-Weber set provides an accurate estimator (center), and the output of \textsc{AdaptRelaxFermatWeberSet} gives a conservative outer estimate (right).} 
    \label{fig:toy-ex}
\end{figure}

We make the following observations about this example. 
First, the empirical unrelaxed Fermat-Weber set is typically a single point: It jumps around the vertices of the population Fermat-Weber set, but it does not, for a fixed value of $n$, appear to give a reasonable estimator.
Second, we note that the empirical adaptively-relaxed Fermat-Weber set converges somewhat slowly to its population counterpart, when compared to the empirical optimally-relaxed Fermat-Weber set.
We found this behavior to be extremely stable across multiple trials of the same experiment.

\subsection{Influenza data}\label{subsec:flu}

In this subsection, we consider estimating the Fermat-Weber set of an unknown population distribution of evolutionary trees of hemagglutinin genome sequences.
We take the data set from \cite{TropPhylo}, which processes the publicly-available GI-SAID EpiFlu$^{\textnormal{TM}}$ data from 1995 in New York state.
We refer the reader to \cite[Section~6.3]{TropPhylo} for details of the pre-processing used to construct the data set.
For our purposes, the data consists of tens of thousands of phylogenetic trees, each of which has $4$ or $5$ leaves.

To describe the inferential problem more carefully, we consider the tropical projective space
\begin{equation*}
    \left(\Rbb^{{N\choose 2}}/\Rbb\mathbf{1},\dtrop\right).
\end{equation*}
By an \textit{equidistant $N$-leaf tree} we mean a distance matrix $w$ on the set $[N] := \{1,2,\ldots, N\}$ satisfying $w(i,k) \leq \max(w(i,j),w(j,k))$ for all $i,j,k\in[N]$. (See \cite[Lemma~4.3.6] {maclagan2021introduction} or \cite[Proposition~3]{TropPhylo}.)
By restricting to only, say, the lower triangle, we can identify the space of all equidistant $N$-leaf trees with a subset $\Ucal_N$ of the tropical projective space.
It is known \cite[Section~2]{BHV} that $\mathcal{U}_N$ is a union of $(2N-3)!! = (2N-3)(2N-5)\cdots 3\cdot 1$ orthants each of dimension $N-2$, and that each orthant corresponds to a unique binary tree topology.
Our data consist of samples $Y_1,\ldots, Y_n\in\Ucal_N$, and we assume that these are IID samples from an unknown population distribution $\mu$ on $\Ucal_N$.
Our goal is to estimate $F_1(\mu)$.

At this point, we can think of the Fermat-Weber set $F_1$ as being computed in either of $(\Rbb^{{N \choose 2}}/\Rbb\mathbf{1}, \dtrop)$ or $(\Ucal_N,\dtrop)$.
In general, Fr\'echet means do not behave well with respect to subspace restriction, but in this case the two problems are closely related.
To state a precise result, let us write $F_{1}^{S}(\mu,\varepsilon)$ for the $\varepsilon$-relaxed Fermat-Weber set, computed in the space $(S,\dtrop)$, where $S$ is an arbitrary closed subset of the tropical projective space.
Then we have the following:

\begin{theorem}\label{thm:FW-proj-trees}
    For $N\in\Nbb$ and $\mu\in\Pcal(\Rbb^{N \choose 2} / \Rbb \mathbf{1})$ with $\mu(\Ucal_N) = 1$, and for any $\varepsilon\geq 0$, we have
    \begin{equation*}
        F_{1}^{\Ucal_N}(\mu,\varepsilon) = F_{1}^{\Rbb^{N\choose 2} / \Rbb \mathbf{1}}(\mu,\varepsilon) \cap \Ucal_N.
    \end{equation*}
\end{theorem}

The preceding result shows that intersecting $F_{1}^{\Rbb^{N\choose 2} / \Rbb \mathbf{1}}$ with $\Ucal_N$ provides a way to calculate $F_{1}^{\Ucal_N}$.
In other words, we can implement a version of \textsc{AdaptRelaxFermatWeberSet} by intersecting with $\Ucal_N$ whenever necessary.
In the next parts, we compute the Fermat-Weber set of samples from the influenza data, using these two perspectives.

\subsubsection{Estimation in tropical projective space}

First we consider estimating the Fermat-Weber set of our tree data in $(\Rbb^{N\choose 2} / \Rbb \mathbf{1},\dtrop)$, the ambient tropical projective space.
For this part, we consider the 4-leaf data set, i.e., $N=4$, so the data naturally lies in dimension ${4\choose 2} = 6$; by setting a specified coordinate to 0, we can identify $\Rbb^{6}/\Rbb\mathbf{1}$ with $\R^5$.

We compute the output of both the unrelaxed Fermat-Weber set and the procedure \textsc{AdaptRelaxFermatWeberSet} for a number of data points $n\in\{5,10,20,30,40,50\}$, and the results are shown in Figure~\ref{fig:real_data_tropical_projective_space}.
In order to visualize these estimators, we plot the projection of the resulting 5-dimensional polytopes onto 2-dimensional subspaces chosen uniformly at random.
The results for 3 random subspaces are shown in Figure~\ref{fig:real_data_tropical_projective_space}.
We observe that the output of \textsc{AdaptRelaxFermatWeberSet} seems to provide a quite conservative outer estimate of $F_1(\mu)$ compared to the small (but somewhat unstable) unrelaxed Fermat-Weber set.

\begin{figure}[t]
    \centering
    \includegraphics[scale=0.5]{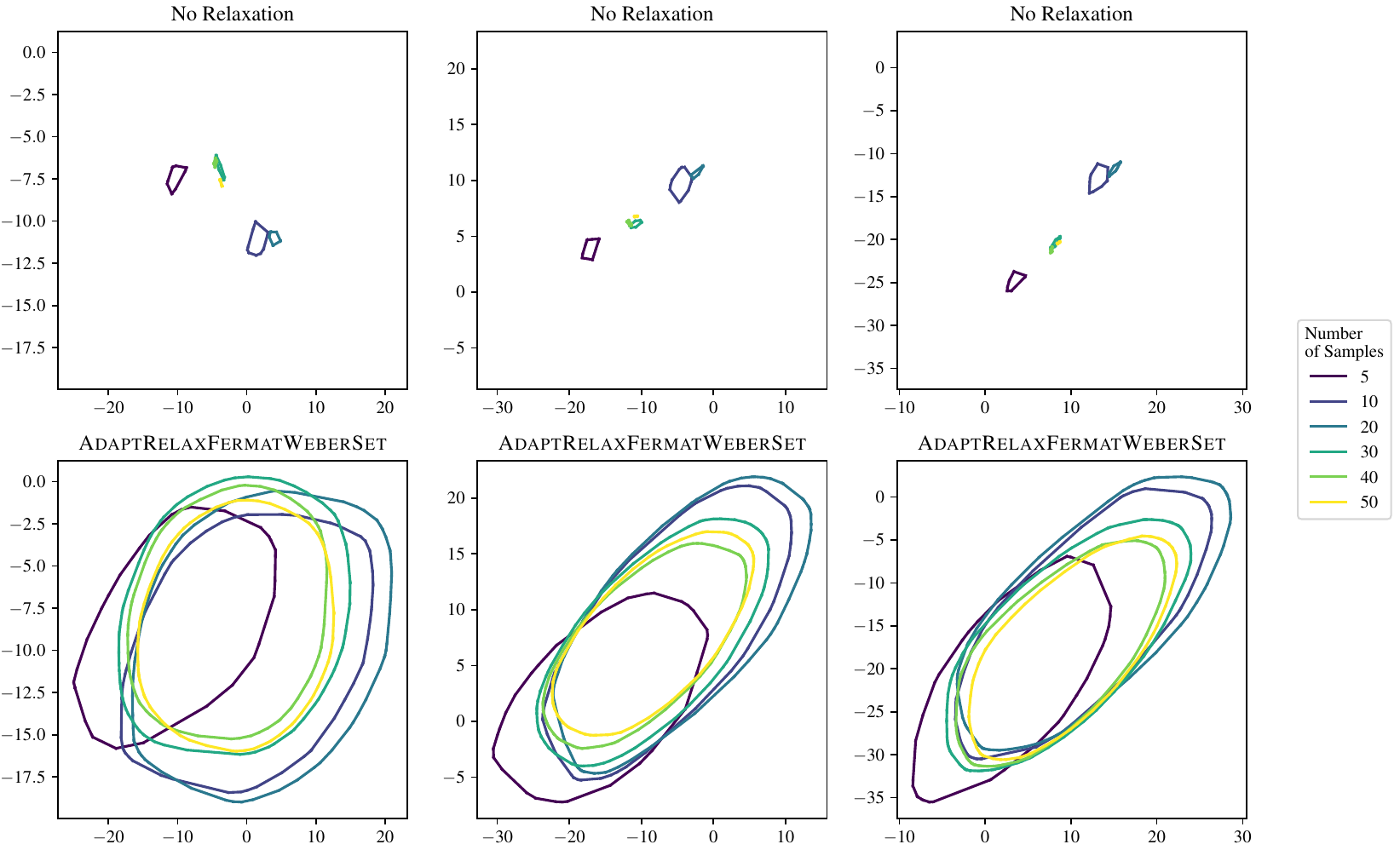}
    \caption{Comparison of Fermat-Weber set estimators on random samples from the 4-leaf influenza data of Subsection \ref{subsec:flu}. Because the estimates are $5$-dimensional polytopes, we plot the unrelaxed Fermat-Weber set (top) and the output of \textsc{AdaptRelaxFermatWeberSet} (bottom) using random 2-dimensional projections (one column per projection).}
    \label{fig:real_data_tropical_projective_space}
\end{figure}

\subsubsection{Estimation in equidistant tree space}\label{subsubsec:equi}

Next we consider estimating the Fermat-Weber set of our tree data in $(\Ucal_N,\dtrop)$, the space of equidistant trees.
For this part, we consider the 5-leaf data set.
This means that $N=5$, so we can think of our data as lying in a union of $(2\cdot 5-3)!! = 105$ orthants each of dimension $5-2 = 3$.

We compute the output of both the unrelaxed Fermat-Weber set and the procedure \textsc{AdaptRelaxFermatWeberSet} for $n=12$ data points.
For the unrelaxed Fermat-Weber set, we find that it contains a single equidistant tree.
For \textsc{AdaptRelaxFermatWeberSet}, we find that the output only intersects 3 orthants of $\Ucal_5$, which means that only 3 different tree topologies are represented in the estimated set;
in order to visualize this, we sample an extreme point uniformly at random from each polytope that results from intersecting the estimated set with each non-trivial topology.
The results can be seen in Figure~\ref{fig:my_label}.
While the estimator from \textsc{AdaptRelaxFermatWeberSet} can be more difficult to interpret, it robustly identifies a few notable qualitative features: 4 and 5 share the most recent common ancestor, 1 and 2 share the most distant common ancestor, and 3 lies somewhere in between.

\begin{figure}[t]
    \begin{tabular}{|c|c|}
        \hline
        \includegraphics[scale=0.5]{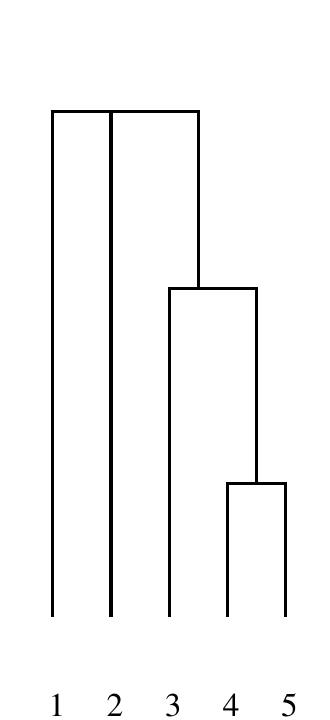} & 
        \includegraphics[scale=0.5]{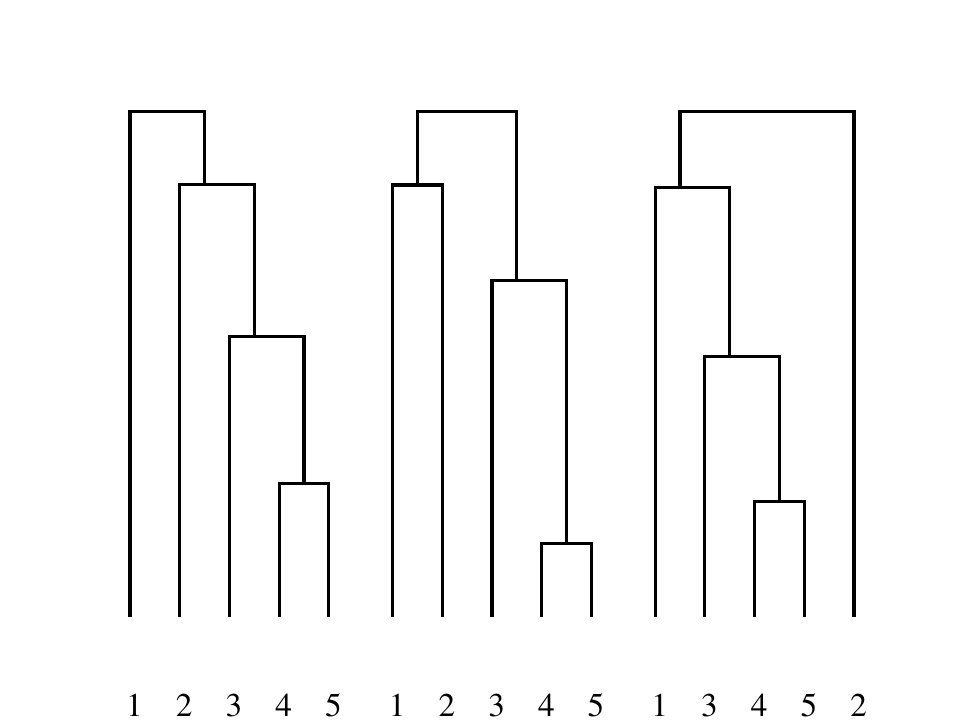} \\
        \hline
    \end{tabular}
    \caption{Estimates of the population tropical Fermat-Weber tree from the influenza data of Subsection~\ref{subsec:flu}. The unrelaxed Fermat-Weber point is a singleton (left). The estimate from \textsc{AdaptRelaxFermatWeberSet} intersects three different tree topologies, and we plot one extremal tree from each topology, sampled uniformly at random (right).}
    \label{fig:my_label}
\end{figure}

\subsection{Discussion}\label{subsec:discussion}

In this example of tropical projective space, we have seen that the abstract optimality result of Theorem~\ref{thm:adaptive-consistency} can be implemented as \textsc{AdaptRelaxFermatWeberSet} in Algorithm~\ref{alg:phylo-estimator} in order to estimate an unknown population Fermat-Weber set.
On simulated and real data, we have seen that this estimator provides a different view of the estimand compared to the unrelaxed empirical Fermat-Weber set.

Let us make some basic remarks.
On the positive side, we see that relaxation methods provide increased stability for the estimation problem.
This is contrasted with the unrelaxed empirical Fermat-Weber set, which can be highly sensitive to even a single data point.
On the negative side, it seems that even the asymptotically optimal procedure \textsc{AdaptRelaxFermatWeberSet} provides a very conservative outer estimate.
In this way, relaxation methods can be seen as a complement to, rather than a replacement of, unrelaxed estimators.

We also make some comments about computation.
For one, the main bottleneck of Algorithm~\ref{alg:phylo-estimator} is the combinatorial optimization problem in line 22; its time complexity is quadratic in the number of extreme points of the previous Fermat-Weber set estimator, and this number of extreme points appears to grow exponentially with the dimension of the problem.
For example, the computation of \textsc{AdaptRelaxFermatWeberSet} in Part~\ref{subsubsec:equi} above already has roughly 3,400 extreme points.
Another comment is that, in many settings one needs $n$ to be very large in order for the asymptotically adaptively optimal rate in $\Theta(n^{-1/2}(\log \log n)^{1/2})$ to become smaller than the asymptotically sub-optimal rate of $m_1(\bar \mu_n)n^{-1/2}\log\log n$.
For these reasons, we believe that a useful heuristic in practice is just to compute the relaxed empirical Fermat-Weber set for the relaxation rate $m_1(\bar \mu_n)n^{-1/2}\log\log n$.

\section{Proof of results}\label{sec:proofs}

In this section, we give some proofs for results stated above, as well as some additional results.
More specifically, Subsection~\ref{subsec:defs} gives the precise definitions of our basic objects, Subsection~\ref{subsec:prob} studies some basic probabilistic preliminaries, Subsection~\ref{subsec:SLLN} provides a strengthened form of the one-sided SLLN, and Subsection~\ref{subsec:main_proofs} gives the proofs of some of our main results.


\subsection{Basic definitions}\label{subsec:defs}

In this subsection we precisely describe the setting of our work.
Most of our notation is taken from \cite{EvansJaffeSLLN}, and some of these concepts have already been defined in Section~\ref{sec:intro}. 
We assume throughout that $(X,d)$ is a metric space.
By a \textit{Heine-Borel (HB)} space we mean that $(X,d)$ is a metric space and that the $d$-closed balls $B_r(x):= \{y\in X: d(x,y)\le r\}$ are compact for all $x\in X$ and $r\ge 0$.
We also write $B^{\circ}_r(x):= \{y\in X: d(x,y)< r\}$ for the $d$-open balls for all $x\in X$ and $r\ge 0$. Next, we write $\closed(X)$ for the collection of $d$-closed (ferm\'e) subsets of $X$ and $C(X)$ for the collection of continuous functions on $X$.
We also recall that $\CovNum_K(\varepsilon)$ denotes the $\varepsilon$-covering number of a compact subset $K\subseteq X$, and we recall the notions of \emph{Dudley space} and \emph{Heine-Borel-Dudley (HBD)} space given in Definition~\ref{def:HBD}.

We write $\Pcal(X)$ for the set of Borel probability measures on $X$, where $X$ is given the topology metrized by $d$, and we write $\tau_w$ for the weak topology on $\Pcal(X)$.
Now let $q\ge 0$.
Recall \cite[Equation~(8)]{EvansJaffeSLLN} that the constant $c_q := \max\{1,2^{q-1}\}$ is such that we have
\begin{equation}\label{eqn:ineq-1}
	d^q(x,y)\le c_q(d^q(x,x')+d^q(x',y))
\end{equation}
for all $x,x',y\in X$.
In particular, a probability measure $\mu\in \Pcal(X)$ satisfies $\int_{X}d^q(x,y)\,d\mu(y) < \infty$ for all $x\in X$ if and only if it satisfies $\int_{X}d^q(x,y)\,d\mu(y) < \infty$ for some $x\in X$;
we write $\Pcal_q(X)\subseteq \Pcal(X)$ for the set of $\mu\in \Pcal(X)$ with this integrability condition.
We write $\tau_w^q$ for the topology on $\Pcal_q(X)$ where $\{\mu_n\}_{n\in\Nbb}$ and $\mu$ in $\Pcal_q(X)$ have $\mu_n\to \mu$ in $\tau_w^q$ if and only if we have both $\mu_n\to \mu$ in $\tau_w$ and $\int_{X}d^q(x,y)\, d\mu_n(y) \to\int_{X}d^q(x,y)\, d\mu(y)$ for some (hence, all \cite[Lemma~2.1]{EvansJaffeSLLN}) $x\in X$.
It is known that $\tau_w^p$ for $p\ge 1$ is metrized by the $p$-Wasserstein metric.

Next, we need to define slightly generalized notions of relaxed and restricted Fr\'echet means.
Indeed, let us define, for $\mu\in \Pcal_{p-1}(X), C\in \closed(X)$, and $\varepsilon\ge 0$, the set
\begin{equation*}
	F_p(\mu,C, \varepsilon) := \{x\in C: W_p(\mu,x,x')\le \varepsilon\textrm{ for all } x'\in C\}.
\end{equation*}
For convenience, we use the notation $F_p(\mu,\varepsilon):=F_p(\mu,X,  \varepsilon)$ for the relaxed (unrestricted) Fr\'echet means. In particular, $F_p(\mu,0) = F_p(\mu)$.
We also note that $F_p^{\ast}(\mu):= F_p(\mu,\supp(\mu),0)$ is an important object of study (see \cite{LinearMedoids,UltraFastMedoids,EvansJaffeSLLN}) often called the \emph{$p$-medoid} of $\mu$.

It is worth mentioning that the notions of Fr\'echet means become slightly simplified when one assumes $\mu\in \Pcal_{p}(X)$ instead of merely $\mu\in \Pcal_{p-1}(X)$.
For instance, one can then define the univariate Fr\'echet functional $W_p:\mathcal{P}_{p}(X)\times X\to \R$ via
\begin{equation*}
	W_p(\mu,x) := \int_{X}d^p(x,y)\,d\mu(y),
\end{equation*}
and it follows that we have the identity $W_p(\mu,x,x') = W_p(\mu,x)-W_p(\mu,x')$ for all $\mu\in \Pcal_p(X)$ and $x,x'\in X$.
Consequently, the Fr\'echet mean set can be equivalently written
\begin{equation*}
	F_p(\mu) = \{x\in X: W_p(\mu,x)\le W_p(\mu,x')\textrm{ for all } x'\in X\}
\end{equation*}
when we have $\mu\in \Pcal_p(X)$; in this case, we also write $m_p(\mu) := \inf_{x\in X}W_p(\mu,x)$.
However, in light of the classical strong law of large numbers (whereby a 1st moment assumption yields strong convergence of the Fr\'echet 2-mean), it is natural to work only under the assumption $\mu\in \Pcal_{p-1}(X)$.

For any set $K\subseteq X$ and $\delta\geq 0$, we denote by $K^\delta:=\bigcup_{x\in K} B_\delta(x)$ the (closed) $\delta$-thickening of $K$.
Observe that if $(X,d)$ is a Heine-Borel space and $K\subseteq X$ is compact, then $K^{\delta}$ is compact for all $\delta \ge 0$.

Last, we adopt standard Bachman-Landau asymptotic notations. For two sequences $(u_n)_{n\in\Nbb}$ and $(v_n)_{n\in\Nbb}$, we write $u_n=O(n)$ if there exists $M\geq 0$ and an index $n_0$ such that for all $n\geq n_0$, $|u_n|\leq M v_n$. We write $u_n=\Theta(v_n)$ if $u_n=O(v_n)$ and $v_n=O(u_n)$. We also say that $u_n=o(v_n)$ if for any $\epsilon>0$ there exists $n_\epsilon$ such that $|u_n|\leq \epsilon v_n$ for all $n\geq n_\epsilon$, and that $u_n=\omega(v_n)$ if for any $M\geq 0$ there exists $n_M$ such that $u_n>M v_n$ for all $n\geq n_M$.

\subsection{Probabilistic setting}\label{subsec:prob}

In this subsection we study the probabilistic results that underly our main theorems. All proofs are given in Appendix~\ref{sec:probabilistic_setting}.
In general, these are functional limit theorems which quantify the rate of convergence of the empirical Fr\'echet functional to the population Fr\'echet functional. Throughout this subsection, let $(X,d)$ be a Heine-Borel space and fix $p\ge 1$.
For $\mu\in \Pcal_{p-1}(X)$, let $(\Omega,\F,\P_{\mu})$ be a probability space on which we have an IID sequence $Y_1,Y_2,\ldots$ each with law $\mu$; write $\E_{\mu}$, $\Var_{\mu}$, and $\Cov_{\mu}$ for the expectation, variance, and covariance on this space, respectively. From these elements we define the random functions $Z_{i}:X^2\to\R$ via 
\begin{equation*}
	Z_{i}(x,x') := d^p(x,Y_i)-d^p(x',Y_i) - \Ebb_\mu [d^p(x,Y_i)-d^p(x',Y_i)] 
	= d^p(x,Y_i)-d^p(x',Y_i) - W_p(\mu,x,x')
\end{equation*}
for $i\in\Nbb$. Last, if $\mu\in\Pcal_{2p-2}(X)$, we define the function $R_{\mu}:X^2\times X^2\to \R$ via
\begin{equation*}
	R_{\mu}(x,x',x'',x''') := \Cov(Z_{1}(x,x'),Z_{1}(x'',x''')).
\end{equation*}
For simplicity, we also write $R_{\mu}(x,x') := R_{\mu}(x,x',x,x') = \Var(Z_{1}(x,x'))$. We prove in Appendix~\ref{sec:probabilistic_setting} that these quantities are well-defined. We first develop some continuity results for these functions.

\begin{lemma}\label{lemma:continuity-Z-R}
    Let $(X,d)$ be a metric space, $p\ge 1$, and $\mu\in \Pcal_{p-1}(X)$ (resp. $\mu\in \Pcal_{2p-2}(X)$).
	Then, the function $Z_i:X^2\to \R$ (resp. $R_{\mu}:K^4\to\R$) is continuous for the metric $D((x,x'),(x'',x''')) = d(x,x'')+d(x',x''')$ (resp. $D((x_1,x_1',x_1'',x_1'''), (x_2,x_2',x_2'',x_2''')) = d(x_1,x_2) + d(x_1',x_2') + d(x_1'',x_2'') + d(x_1''',x_2''') $). 
\end{lemma}

The proof is given in Appendix~\ref{sec:probabilistic_setting}.
Next, we prove a uniform convergence result from $R_{\bar\mu_n}$ to $R_\mu$ on compacts.
\begin{lemma}\label{lemma:uniform_convergence_covariance}
	Let $(X,d)$ be a HB space, $p\geq 1$ and $\mu\in \Pcal_{2p-2}(X)$. Then, for any compact set $K\subseteq X$, the functions $R_{\bar\mu_n}:K^2\to \Rbb$ and $R_\mu:K^2\to\Rbb$ satisfy $\|R_{\bar\mu_n} - R_\mu\|_\infty \to 0 \; (a.s.)$.
\end{lemma}
The proof is given in Appendix~\ref{sec:probabilistic_setting}.
We now give limit theorems for the random continuous functions $G_n:X^2\to\R$ defined as
\begin{equation*}
	G_{n} = \sum_{i=1}^n Z_{i},
\end{equation*}
for $n\in\Nbb$.	Of course, classical limit theorems allow us to analyze the convergence  of the sequence $\{G_n(x,x')\}_{n\in\Nbb}$ for any fixed $x,x'\in X$.
However, for our later purposes, this will not be enough; we need sufficiently powerful functional limit theorems which allow us to analyze the convergence of the sequence $\{G_n\}_{n\in\Nbb}$ across the whole domain $X^2$ simultaneously.

We begin with the functional central limit theorem.
Because of the integrability assumption $\mu\in \Pcal_{2p-2}(X)$, the function $R_{\mu}$ is a positive semi-definite kernel on the space $X^2$.
We then denote by $\Gcal_{\mu}$ the mean-zero Gaussian measure on $C(X^2)$ whose covariance function is $R_{\mu}$.
We get the following:

\begin{proposition}\label{prop:functional_CLT}
	Let $(X,d)$ be a HBD space, $p\ge 1$, and $\mu\in\Pcal_{2p-2}(X)$.
	Then the random functions $\{n^{-1/2}G_{n}\}_{n\in\Nbb}$ converge in distribution to $\Gcal_{\mu}$ with respect to the topology of uniform convergence on compact sets.
\end{proposition}

Next we study the functional law of the iterated logarithm (fLIL), which requires some notation.
For each compact subset $K\subseteq X$ and $\mu\in \Pcal_{2p-2}(X)$, we write $R_{K,\mu}$ and $G_{K,n}$ for the restrictions $R_{K,\mu}:=R_{\mu}|_{K\times K}$ and $G_{K,n}:=G_{n}|_{K\times K}$, respectively, and we let $\Hcal_{K,\mu}\subseteq C(K\times K)$ denote the reproducing kernel Hilbert space (RKHS) with kernel $R_{K,\mu}$; we write $\|\cdot\|_{K,\mu}$ and $\langle\cdot,\cdot\rangle_{K,\mu}$ for the norm and inner product of $\Hcal_{K,\mu}$, and write $\Bcal_{K,\mu} := \{f\in \mathcal{H}_{K,\mu}: \|f\|_{K,\mu}\le 1\}\subseteq C(K\times K)$ for the unit ball of $\mathcal{H}_{K,\mu}$.

\begin{proposition}\label{prop:functional_LIL}
	Let $(X,d)$ be a HBD space and $p\ge 1$, and $\mu\in\Pcal_{2p-2}(X)$.
	Then for each compact set $K\subseteq X$, the random functions $\{(2n\log\log n)^{-1/2}G_{K,n}\}_{n\in\Nbb}$ form a relatively compact set with closure $\Bcal_{K,\mu}$, 
	with respect to the topology of uniform convergence on $K$.
\end{proposition}

It turns out that our critical pre-factor $\sigma_p(\mu)$ introduced in \eqref{eqn:sigma-p} is closely related to the RKHS corresponding to $R_{\mu}$.
The following result makes this relationship precise:

\begin{lemma}\label{lemma:formula_cp(mu)}
	Let $(X,d)$ be a metric space, $p\geq 1$ and $\mu\in\Pcal_{2p-2}(X)$. Then, for any compact $K\subseteq X$ with $F_p(\mu)\subseteq K$,
	\begin{equation*}
		\sigma_p(\mu) = \sqrt{2} \sup_{f\in \Bcal_{K,\mu}} \sup_{x,x'\in F_p(\mu)} f(x,x').
	\end{equation*}
\end{lemma}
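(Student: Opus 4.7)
My plan is to translate the claimed identity into a statement about the reproducing kernel Hilbert space $\Hcal_{K,\mu}$ built from $R_{K,\mu}$, and then use the reproducing property together with Cauchy–Schwarz. The key observation is that, by definition,
\begin{equation*}
\Var_\mu(d^p(x,Y_1)-d^p(x',Y_1)) = \Var(Z_1(x,x')) = R_\mu(x,x'),
\end{equation*}
so the target identity is equivalent to
\begin{equation*}
\sup_{x,x'\in F_p(\mu)} \sqrt{R_\mu(x,x')} \;=\; \sup_{f\in \Bcal_{K,\mu}} \sup_{x,x'\in F_p(\mu)} f(x,x').
\end{equation*}
Thus the whole problem reduces to a pointwise claim: for every pair $(x,x')\in K\times K$, one has $\sup_{f\in \Bcal_{K,\mu}} f(x,x') = \sqrt{R_\mu(x,x')}$. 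The hypothesis $F_p(\mu)\subseteq K$ is needed only to ensure that the evaluations $f(x,x')$ make sense for $f\in C(K\times K)$ and $x,x'\in F_p(\mu)$.

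For the upper bound, I would apply the reproducing property: for each $s=(x,x')\in K\times K$, the kernel section $R_{K,\mu}(\cdot,s)$ lies in $\Hcal_{K,\mu}$, and any $f\in \Hcal_{K,\mu}$ satisfies $f(s)=\langle f,R_{K,\mu}(\cdot,s)\rangle_{K,\mu}$. Cauchy–Schwarz then gives
\begin{equation*}
|f(x,x')| \le \|f\|_{K,\mu}\,\sqrt{R_{K,\mu}(s,s)} = \|f\|_{K,\mu}\sqrt{R_\mu(x,x')},
\end{equation*}
so that $\sup_{f\in \Bcal_{K,\mu}} f(x,x') \le \sqrt{R_\mu(x,x')}$.

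For the matching lower bound I would exhibit an explicit extremizer. When $R_\mu(x,x')>0$, set
\begin{equation*}
f^{(x,x')}(\cdot) := \frac{R_{K,\mu}(\cdot,(x,x'))}{\sqrt{R_\mu(x,x')}};
\end{equation*}
the reproducing property gives $\|f^{(x,x')}\|_{K,\mu}^2 = R_{K,\mu}((x,x'),(x,x'))/R_\mu(x,x') = 1$, so $f^{(x,x')}\in \Bcal_{K,\mu}$, and also $f^{(x,x')}(x,x')=\sqrt{R_\mu(x,x')}$, saturating the bound. When $R_\mu(x,x')=0$, the upper bound already forces the sup to equal $0$, so the trivial choice $f\equiv 0$ works. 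Taking the supremum over $x,x'\in F_p(\mu)$ and multiplying by $\sqrt{2}$ yields the identity.

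The only subtlety I foresee is the bookkeeping around the RKHS: one must know that $\Hcal_{K,\mu}$ is well-defined (which follows from $R_{K,\mu}$ being a bona fide continuous symmetric positive semi-definite kernel on the compact $K\times K$, itself guaranteed by $\mu\in\Pcal_{2p-2}(X)$ together with Lemma~\ref{lem:Z-R-cts}), and that the reproducing property applies on its unit ball. No approximation arguments, no compactness arguments beyond ensuring the kernel is well-posed, and no use of $F_p(\mu)$ beyond it being contained in $K$ are needed.
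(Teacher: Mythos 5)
Your proposal is correct and follows essentially the same route as the paper's proof: the reproducing property plus Cauchy--Schwarz for the upper bound, and the normalized kernel section $R_{K,\mu}(x,x',\cdot,\cdot)/\|R_{K,\mu}(x,x',\cdot,\cdot)\|_{K,\mu}$ as the extremizer for the lower bound. Your explicit treatment of the degenerate case $R_\mu(x,x')=0$ is a small improvement, since the paper's normalization silently assumes the kernel section has nonzero norm.
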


The proof is given in Appendix~\ref{sec:probabilistic_setting}.
Last, we provide the following, which is an asymptotic version of Lemma~\ref{lemma:formula_cp(mu)}, which proof is also given in Appendix~\ref{sec:probabilistic_setting}.

\begin{lemma}\label{lemma:sigma_continuity}
	Let $(X,d)$ be a metric space, $p\ge 1$, and $\mu\in\Pcal_{2p-2}(X)$.
	Suppose also that we have compact subsets $\{K_{\delta}\}_{0 <\delta \le 1}$ of $X$ with $\bigcap_{0 < \delta \le 1}K_{\delta} = F_p(\mu)$.
	Then
	\begin{equation*}
		\sqrt 2 \cdot \sup_{f\in \Bcal_{K_1,\mu}}\left\{ \max_{x\in F_p(\mu)} \max_{x'\in K_\delta} f(x,x') \right\} \underset{\delta\to 0}{\longrightarrow} \sigma_p(\mu).
	\end{equation*}
\end{lemma}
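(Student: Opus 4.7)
The plan is to prove both inequalities separately, exploiting the compactness of the RKHS ball $\Bcal_{K_1,\mu}$ from Lemma~\ref{lem:RKHS-ball-cpt} together with the identity from Lemma~\ref{lemma:formula_cp(mu)} applied with $K = K_1$. Denote the quantity in question by $L(\delta)$, so the goal is to show $L(\delta) \to \sigma_p(\mu)$.

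For the lower bound, note that the hypothesis $\bigcap_{\delta} K_\delta = F_p(\mu)$ gives $F_p(\mu) \subseteq K_\delta$ for every $\delta \in (0,1]$. Hence the inner double maximum over $F_p(\mu) \times K_\delta$ dominates the double maximum over $F_p(\mu) \times F_p(\mu)$, and taking the supremum over $f \in \Bcal_{K_1,\mu}$ yields $L(\delta) \geq \sqrt{2}\sup_{f \in \Bcal_{K_1,\mu}} \sup_{x,x' \in F_p(\mu)} f(x,x') = \sigma_p(\mu)$ by Lemma~\ref{lemma:formula_cp(mu)}. Taking $\liminf_{\delta \to 0}$ then gives one direction of the claim.

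For the upper bound, I would argue by contradiction. Assume there exist $\eta > 0$ and a sequence $\delta_n \downarrow 0$ along which $L(\delta_n) \geq \sigma_p(\mu) + \eta$, and select near-maximizers $f_n \in \Bcal_{K_1,\mu}$, $x_n \in F_p(\mu)$, $x_n' \in K_{\delta_n}$ with $\sqrt{2}\, f_n(x_n,x_n') \geq \sigma_p(\mu) + \eta/2$. By Lemma~\ref{lem:RKHS-ball-cpt}, $\Bcal_{K_1,\mu}$ is compact in the uniform topology on $K_1 \times K_1$, so after passing to a subsequence I can take $f_n \to f_\infty \in \Bcal_{K_1,\mu}$ uniformly. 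Since $F_p(\mu)$ is compact (being a closed subset of the compact set $K_1$) and the $x_n'$ lie in the compact $K_1$, two further extractions give $x_n \to x_\infty \in F_p(\mu)$ and $x_n' \to x_\infty' \in K_1$. Uniform convergence of $f_n$ combined with continuity yields $f_n(x_n, x_n') \to f_\infty(x_\infty, x_\infty')$, and if I can show $x_\infty' \in F_p(\mu)$ this will contradict Lemma~\ref{lemma:formula_cp(mu)}.

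The main obstacle is justifying $x_\infty' \in F_p(\mu)$, since the hypothesis supplies only that the intersection of the $K_\delta$ equals $F_p(\mu)$. The natural situation in which this lemma will be applied is that the family $\{K_\delta\}_{0 < \delta \leq 1}$ is nested (decreasing as $\delta \to 0$, e.g.\ $K_\delta = F_p(\mu)^\delta \cap K_1$), and under this implicit monotonicity the argument is immediate: for any fixed $\delta^* > 0$ we have $\delta_n \leq \delta^*$ eventually, hence $x_n' \in K_{\delta_n} \subseteq K_{\delta^*}$, and closedness of $K_{\delta^*}$ forces $x_\infty' \in K_{\delta^*}$; intersecting over $\delta^* > 0$ gives $x_\infty' \in F_p(\mu)$. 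If one wishes to avoid the monotonicity assumption, one can instead argue contrapositively: were $x_\infty' \notin F_p(\mu)$ there would exist $\delta^* > 0$ with $x_\infty' \notin K_{\delta^*}$ and a neighborhood of $x_\infty'$ disjoint from $K_{\delta^*}$, which rules out $x_n' \to x_\infty'$ under any reasonable structure of $\{K_\delta\}$ (such as $\dhaus(K_\delta, F_p(\mu)) \to 0$). The topological core of the argument is identical in either setup.
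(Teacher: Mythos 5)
Your proposal is correct and follows essentially the same route as the paper: the lower bound from $F_p(\mu)\subseteq K_\delta$ together with Lemma~\ref{lemma:formula_cp(mu)}, and the upper bound by extracting convergent subsequences $(f_n,x_n,x_n')$ using the compactness of $\Bcal_{K_1,\mu}$ from Lemma~\ref{lem:RKHS-ball-cpt} and concluding via Lemma~\ref{lemma:formula_cp(mu)} once the limit $x_\infty'$ is placed in $F_p(\mu)$. In fact you are more careful than the paper at the one delicate step (the paper simply asserts ``since $\delta_i\to 0$, we also have $x'\in F_p(\mu)$''), correctly noting that this uses the implicit nestedness of the family $\{K_\delta\}$, which holds in the intended application $K_\delta=(F_p(\mu))^\delta$.
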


\subsection{Strong law of large numbers}\label{subsec:SLLN}

In our study of relaxed Fr\'echet means with adaptive relaxation rates, we need one-sided SLLN for relaxed constrained Fr\'echet means. All proofs of this subsection are given in Appendix~\ref{sec:SLLN}. As before, let $(X,d)$ be a metric space, fix $\mu\in \mathcal{P}(X)$, and let $(\Omega,\F,\P_{\mu})$ be a probability space on which is defined a sequence $Y_1,Y_2,\ldots$ of IID random variables with common distribution $\mu$.
Then we first show the compactness of relaxed constrained Fr\'echet means.

\begin{lemma}\label{lemma:Fp-cpt}
	If $(X,d)$ is a HB space, $p\ge 1$, and $(\mu,C,\varepsilon) \in \Pcal_{p-1}(X)\times \closed(X)\times [0,\infty)$, the set $F_p(\mu,C,\varepsilon)$ is non-empty and compact.
\end{lemma}
The proof is given in Appendix~\ref{sec:SLLN}.
Next, we have the following.

\begin{theorem}\label{thm:mean-SLLN}
	Let $(X,d)$ be a HB space, $p\ge 1$, and $\mu\in\Pcal_{p-1}(X)$.
	Then, any random relaxation rate with $\varepsilon_n\to 0$ satisfies $\dvechaus(F_p(\bar \mu_n,\varepsilon_n),F_p(\mu))\to 0$ almost surely.
\end{theorem}

We will need Theorem~\ref{thm:mean-SLLN} in our later study of relaxed Fr\'echet means with adaptive relaxation rates. However, in Appendix~\ref{sec:SLLN} (Theorem C.2), we give a more general version of this result for restricted and relaxed Fr\'echet mean sets, which is of independent interest. From this general result, we can deduce a strong consistency result for medoids, which resolves the open question of \cite[Remark~2.11]{EvansJaffeSLLN}.
By the \textit{$p$-medoid set} of $\mu$ we mean the set $F_p^{\ast}(\mu) := F_p(\mu,\supp(\mu),0)$.
Then we have the following, which generalizes the result of \cite{EvansJaffeSLLN}:

\begin{theorem}\label{thm:medoid-SLLN}
	Let $(X,d)$ be a HB space, $p\ge 1$, and $\mu\in\Pcal_{p-1}(X)$.
	Then, we have almost surely $	\dvechaus(F_p^{\ast}(\bar \mu_n),F_p^{\ast}(\mu))\to 0$.
\end{theorem}

\subsection{Proofs of main general results}
\label{subsec:main_proofs}

We start by showing that $\sigma_p(\mu)$ is finite whenever $\mu\in\Pcal_{2p-2}(X)$.

\begin{lemma}\label{lemma:finite_sigma}
	Let $X$ be a HB space, $p\geq 1$, and $\mu\in\Pcal_{2p-2}(X)$.
	Then, $\sigma_p(\mu)<\infty$.
\end{lemma}

The proof is given in Appendix~\ref{sec:proof_general_results}. Next, we discuss conditions under which $\sigma_p(\mu)$ is positive.
First of all, observe that $\sigma_p(\mu) = 0$ if and only if $d(x,Y_1) = d(x',Y_1)$ holds almost surely for all $x,x'\in F_p(\mu)$.
However, this condition is impossible to check if $F_p(\mu)$ is not already known; thus, we provide the following sufficient condition:

\begin{lemma}\label{lem:sigma_pos}
	Let $(X,d)$ be a metric space.
	If $\mu\in \Pcal_{p-1}(X)$ with $\supp(\mu) = X$, then either $\#F_p(\mu) = 1$ or $\sigma_p(\mu) > 0$.
\end{lemma}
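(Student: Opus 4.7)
The plan is to argue by contradiction: assume $\sigma_p(\mu) = 0$ and $\#F_p(\mu) \neq 1$, and derive a contradiction. Since Corollary~\ref{cor:Fp-cpt} guarantees that $F_p(\mu)$ is non-empty, the negation of $\#F_p(\mu) = 1$ means there exist two distinct points $x, x' \in F_p(\mu)$.

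The hypothesis $\sigma_p(\mu) = 0$ forces $\Var_\mu(d^p(x,Y_1) - d^p(x',Y_1)) = 0$ for every pair $x, x' \in F_p(\mu)$. In particular, the function $g: X \to \R$ defined by $g(y) := d^p(x,y) - d^p(x',y)$ is $\mu$-almost surely equal to a constant $c$. The key observation is that $g$ is continuous on $X$ (by the standard continuity estimate \eqref{eqn:ineq-2}), and the assumption $\supp(\mu) = X$ promotes an $\mu$-a.e.\ equality to a pointwise equality: $g(y) = c$ for every $y \in X$.

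To identify $c$, I would use the defining property of the Fr\'echet mean set. Since $x, x' \in F_p(\mu)$, we have both $W_p(\mu, x, x') \le 0$ and $W_p(\mu, x', x) \le 0$, which together yield $W_p(\mu, x, x') = 0$. But $W_p(\mu, x, x') = \int_X g(y)\, d\mu(y) = c$, so $c = 0$. Therefore $d^p(x,y) = d^p(x',y)$ for \emph{all} $y \in X$. Specializing to $y = x$ gives $0 = d^p(x, x) = d^p(x', x)$, forcing $x = x'$, a contradiction with our choice of distinct points.

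I do not expect any serious obstacle here; the only slightly delicate point is justifying the promotion of the $\mu$-a.e.\ equality $g \equiv c$ to a genuine pointwise equality, which is a standard consequence of continuity together with $\supp(\mu) = X$ (any $\mu$-null open set must be empty). All other pieces — finiteness of $\sigma_p(\mu)$ or of the relevant variance is not needed in the degenerate direction since we only use that the variance equals zero, and integrability of $g$ follows from $\mu \in \Pcal_{p-1}(X)$ via the estimate \eqref{eqn:ineq-2}.
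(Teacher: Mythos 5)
Your proof is correct, and it takes a different route from the paper's. The paper argues directly: it picks distinct $x,x'\in F_p(\mu)$ with $r:=d(x,x')>0$, uses $x\in\supp(\mu)$ to get $\mu(B_{r/2}(x))>0$, and observes that on this ball $d(x,Y_1)<r/2\le d(x',Y_1)$, so the two distance profiles differ with positive probability and the variance cannot vanish; this only uses the support condition locally, at a single ball around one of the mean points. You instead run the contrapositive through the characterization ``zero variance $\Rightarrow$ a.s.\ constant,'' promote the $\mu$-a.e.\ identity $d^p(x,\cdot)-d^p(x',\cdot)\equiv c$ to a pointwise one via continuity and $\supp(\mu)=X$, pin down $c=0$ from $W_p(\mu,x,x')\le 0$ and $W_p(\mu,x',x)\le 0$, and then evaluate at $y=x$ to force $x=x'$. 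Your version is slightly longer but has the merit of making explicit a step the paper's terse proof leaves implicit: knowing merely that $d^p(x,Y_1)-d^p(x',Y_1)$ is nonzero with positive probability does not by itself rule out zero variance (the variable could be a.s.\ equal to a nonzero constant), and one must either invoke $\E_\mu[d^p(x,Y_1)-d^p(x',Y_1)]=W_p(\mu,x,x')=0$ (as you do) or symmetrize by also considering a ball around $x'$. You also correctly note that no second-moment assumption is needed in the degenerate direction, and that integrability of $y\mapsto d^p(x,y)-d^p(x',y)$ under $\mu\in\Pcal_{p-1}(X)$ comes from \eqref{eqn:ineq-2}, which matches the paper's setup.
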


The proof is given in Appendix~\ref{sec:proof_general_results}. Now we can prove our main results.
We also defer to Appendix~\ref{sec:SLLN} the proof of Theorem \ref{thm:gaussian_tail_one_sided} that gives a description of what happens in the regime of $\Omega(n^{-1/2})$ relaxation. Complementing this result, we also have the following result aiming to characterize weak $\dhaus$-consistency. The proof is given in Appendix~\ref{sec:proof_general_results}.

\begin{theorem}\label{thm:weak_convergence}
	Let $(X,d)$ be a HBD space and $p\geq 1$, and consider any random relaxation rate $\varepsilon_n$.
	Then for every $\mu\in \Pcal_{2p-2}(X)$, we have:
	\begin{itemize}
		\item[(i)] If $\varepsilon_n\in o_p(1)$ and $\varepsilon_n\in \omega_p(n^{-1/2})$, then $F_p(\bar\mu_n,\varepsilon_n)$ is weakly $\dhaus$-consistent.
		\item[(ii)] If $\varepsilon_n$ is deterministic, $\sigma_p(\mu) > 0$, and $F_p(\bar \mu_n,\varepsilon_n)$ is weakly $\dhaus$-consistent, then $\varepsilon_n\in\omega(n^{-1/2})$.
	\end{itemize}
\end{theorem}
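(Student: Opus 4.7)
The plan is to handle the two parts separately, leveraging the machinery of Section~\ref{sec:prelim}: Corollary~\ref{cor:mean-SLLN} for the ``no false positives'' direction and Theorem~\ref{thm:gaussian_tail_one_sided} for the ``no false negatives'' direction in part (i), and the functional CLT of Proposition~\ref{prop:functional_CLT} combined with the positivity of $\sigma_p(\mu)$ in part (ii).

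For part (i), weak $\dhaus$-consistency decomposes into the two one-sided convergences in probability. The first, $\dvechaus(F_p(\bar\mu_n,\varepsilon_n), F_p(\mu)) \to 0$ in probability, will follow from Corollary~\ref{cor:mean-SLLN} via the subsequence principle, using $\varepsilon_n \in o_p(1)$. For the second direction, I would use the monotonicity $F_p(\bar\mu_n,\varepsilon) \subseteq F_p(\bar\mu_n,\varepsilon')$ for $\varepsilon \le \varepsilon'$: for any fixed $c \ge M_p(\mu)$,
\[
\Pbb_\mu(\dvechaus(F_p(\mu), F_p(\bar\mu_n,\varepsilon_n)) \ge \delta) \le \Pbb_\mu(\dvechaus(F_p(\mu), F_p(\bar\mu_n,c/\sqrt{n})) \ge \delta) + \Pbb_\mu(\varepsilon_n\sqrt{n} < c).
\]
The assumption $\varepsilon_n \in \omega_p(n^{-1/2})$ sends the second term to zero, and Theorem~\ref{thm:gaussian_tail_one_sided} applied to the deterministic rate $c/\sqrt{n}$ bounds the limsup of the first term by $\exp(-(c - M_p(\mu))^2/\sigma_p^2(\mu))$; sending $c \to \infty$ then finishes part (i).

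For part (ii), I would argue by contrapositive. Assuming $\varepsilon_n$ is deterministic with $\liminf_n \varepsilon_n\sqrt{n} < \infty$, extract a subsequence $\{n_k\}$ along which $\sqrt{n_k}\,\varepsilon_{n_k} \le C$ for some $C < \infty$. Since $\sigma_p(\mu) > 0$, fix $x_0, x_0' \in F_p(\mu)$ with $v := \Var_\mu(Z_1(x_0, x_0')) > 0$. The key algebraic input is the cocycle identity $Z_i(y, x_0') = Z_i(y, x_0) + Z_i(x_0, x_0')$, which is immediate from the definition of $Z_i$ and the telescoping of $W_p(\mu,\cdot,\cdot)$; combined with $W_p(\mu, y, x_0') = W_p(\mu,y) - m_p(\mu) \ge 0$ (since $x_0' \in F_p(\mu)$), it yields, for every $y \in X$, the lower bound
\[
W_p(\bar\mu_n, y, x_0') \ge n^{-1}\bigl(G_n(x_0, x_0') - |G_n(y, x_0)|\bigr).
\]
I then fix $\beta > 0$ and introduce the open event $E_n := \{n^{-1/2}G_n(x_0, x_0') > C + 2\beta\} \cap \{\sup_{y \in B_\delta(x_0)} n^{-1/2}|G_n(y, x_0)| < \beta\}$; on $E_{n_k}$ the bound above gives $W_p(\bar\mu_{n_k}, y, x_0') > n_k^{-1/2}(C + \beta) > \varepsilon_{n_k}$ uniformly in $y \in B_\delta(x_0)$, so $B_\delta(x_0) \cap F_p(\bar\mu_{n_k}, \varepsilon_{n_k}) = \emptyset$ and $\dvechaus(F_p(\mu), F_p(\bar\mu_{n_k}, \varepsilon_{n_k})) \ge \delta$.

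The hardest step is to show $\liminf_k \Pbb_\mu(E_{n_k}) > 0$. Both defining conditions of $E_n$ are open in the topology of uniform convergence on a sufficiently large compact subset of $X \times X$, so by the functional CLT (Proposition~\ref{prop:functional_CLT}) together with the Portmanteau theorem,
\[
\liminf_k \Pbb_\mu(E_{n_k}) \ge \Pbb\bigl(\{G(x_0, x_0') > C + 2\beta\} \cap \{\textstyle\sup_{y\in B_\delta(x_0)}|G(y, x_0)| < \beta\}\bigr),
\]
where $G \sim \Gcal_\mu$. To make the right side positive, I would first fix $\beta > 0$ so that $p_0 := \Pbb(G(x_0, x_0') > C + 2\beta) > 0$, which is possible since $G(x_0, x_0')$ is Gaussian with variance $v > 0$; then use the a.s.\ continuity of $G$ with $G(x_0, x_0) = 0$ and dominated convergence to pick $\delta > 0$ small enough that $\Pbb(\sup_{y \in B_\delta(x_0)}|G(y, x_0)| \ge \beta) < p_0/2$. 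This yields $\liminf_k \Pbb_\mu(E_{n_k}) \ge p_0/2 > 0$, which contradicts weak $\dhaus$-consistency. The essential obstacle throughout is that a naive application of the per-sample Lipschitz bound from Lemma~\ref{lem:Z-R-cts} would only exclude a ball of radius $O(n_k^{-1/2})$ around $x_0$; one must instead exploit the cocycle cancellation in $G_n(y, x_0)$, whose Gaussian limit vanishes at $y = x_0$ and is therefore uniformly small on a fixed small neighborhood.
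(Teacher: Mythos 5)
Your proposal is correct. Part (i) is essentially the paper's argument: the ``no false negatives'' direction via monotonicity in the relaxation scale, the bound $\Pbb_\mu(\dvechaus(F_p(\mu),F_p(\bar\mu_n,\varepsilon_n))\ge\delta)\le\Pbb_\mu(\dvechaus(F_p(\mu),F_p(\bar\mu_n,cn^{-1/2}))\ge\delta)+\Pbb_\mu(\varepsilon_n\sqrt n<c)$, Theorem~\ref{thm:gaussian_tail_one_sided}, and $c\to\infty$ is exactly what the paper does; for the ``no false positives'' direction the paper fixes a deterministic $\varepsilon>0$ with $F_p(\mu,\varepsilon)\subseteq(F_p(\mu))^{\delta}$ and uses $\Pbb_\mu(\varepsilon_n>\varepsilon)\to 0$, whereas you invoke the SLLN through the subsequence principle; both work, though note that Corollary~\ref{cor:mean-SLLN} is stated for $\varepsilon_n\to0$ almost surely, so your route really passes through the pathwise statement (Corollary~\ref{cor:Fp-cts}) applied along a deterministic subsequence on which $\varepsilon_n\to0$ a.s. For part (ii) your structure matches the paper's (localize at a pair $x_0,x_0'\in F_p(\mu)$ attaining positive variance, exploit the cocycle cancellation so that $y\mapsto G_n(y,x_0)$ has a Gaussian limit vanishing at $y=x_0$, and conclude that a fixed ball around $x_0$ is missed by $F_p(\bar\mu_{n_k},\varepsilon_{n_k})$ with probability bounded away from zero), but you replace the paper's quantitative control of $\sup_{y\in B_\eta(x_0)}|\tilde G(y)|$ — obtained there via Dudley's entropy bound together with the Borell--TIS inequality, with an explicit radius $\eta(\delta)$ — by a softer argument: almost-sure sample continuity of $G\sim\Gcal_\mu$ with $G(x_0,x_0)=0$, continuity of measure to choose $\delta$, and the Portmanteau inequality for the open event $E_n$. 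This buys a shorter proof that sidesteps Gaussian concentration entirely (at the cost of any quantitative information about the excluded radius), and your explicit use of open sets and $\liminf$ in the Portmanteau step is in fact a cleaner justification of the distributional-limit passage than the limit equality displayed in the paper's proof.
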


We now argue that the requirements of the previous theorem cannot be dropped. We start by the condition $\sigma_p(\mu) > 0$. A simple example is when the Fr\'echet mean set is a singleton $F_p(\mu) = \{\bar x\}$, so $\varepsilon_n \equiv 0$ suffices for strong (hence weak) consistency.
Indeed, in this case, for any bounded subset $S\subseteq X$,
\begin{equation*}
	\dvechaus(F_p(\mu),S) = d(\bar x, S) \leq \dvechaus(S,F_p(\mu)).
\end{equation*}
Therefore, the strong one-sided convergence $\dvechaus(F_p(\bar\mu,\varepsilon_n),F_p(\mu))\to 0$ a.s., which holds whenever $\varepsilon_n\to 0$, is sufficient for strong Hausdorff convergence.

Unfortunately, when $\sigma_p(\mu)=0$, the condition $\varepsilon_n\to 0$ may not be sufficient either. In the next result, we give an explicit example where some rate in $o(n^{-1/2})$ gives strong (hence weak) consistency.
For the sake of simplicity, we focus on deterministic relaxation rates.

\begin{proposition}\label{prop:example_square}
	Let $X = \{(\cos \theta,\sin\theta):\theta\in\{0\} \cup [\pi/2,3\pi/2]\}$ be equipped with the geodesic distance $d((\cos \theta,\sin\theta),(\cos \theta',\sin\theta')) =  \frac{2}{\pi}\min_{k\in\Zbb}|\theta-\theta'-2k\pi|$, let $p> 1$, and consider the distribution $\mu = \frac{1}{2}\delta_{(0,1)}+ \frac{1}{2}\delta_{(0,-1)}$.
	Then $(X,d)$ is a Dudley space, $\sigma_p(\mu) = 0$, and for any deterministic relaxation rate $\varepsilon_n$ we have the following:
	\begin{enumerate}
		\item[(W)] $F_p(\bar\mu_n,\varepsilon_n)$ is weakly $\dhaus$-consistent if and only if $\varepsilon_n = o(1)\cap  \omega(n^{-1})$.
		\item[(S)] $F_p(\bar \mu_n,\varepsilon_n)$ is strongly $\dhaus$-consistent if $\liminf_{n\to\infty}\varepsilon_n n/\log \log n > 2p/(p-1)$ and not strongly $\dhaus$-consistent if $\limsup_{n\to\infty}\varepsilon_n n/\log \log n < 2p/(p-1)$.
	\end{enumerate}
\end{proposition}

The proof is given in Appendix~\ref{sec:proof_general_results}. As a last remark, in Theorem \ref{thm:weak_convergence}, the condition $\varepsilon_n = o(1)$ may not be necessary for weak convergence. This condition is unnecessary when the functional $W_p(\mu,\cdot)$ has a gap from its minimum value $m_p(\mu)$ to non-optimal values. For instance, consider the simple case of a discrete finite space $X$. Then, letting $\delta^* = \min_{x\notin F_p(\mu)}W_p(\mu,x) - m_p(\mu) > 0$, the usual strong law of large number readily implies that $F_p(\bar\mu_n,\delta)$ is weakly and strongly $\dhaus$-consistent if $\delta<\delta^*$ and not weakly nor strongly $\dhaus$-consistent if $\delta>\delta^*$. The following result gives a more precise description of the frontier for weakly and strongly consistent relaxation rates in the simple case of a two-point space and non-uniform measure.

\begin{proposition}\label{prop:ex-two-point}
	Let $X=\{x_1,x_2\}$ be equipped with the discrete metric $d(x,y)=\mathbf{1}[x=y]$, let $p\geq 1$, and consider the distribution $\mu = q \delta_{x_1}+ (1-q)\delta_{x_2}$ for $q\in(0,\frac{1}{2})\cup (\frac{1}{2},1)$.
	Then $(X,d)$ is a Dudley space, $\sigma_p(\mu) = 0$, and for any deterministic relaxation rate $\varepsilon_n$ we have the following:
	\begin{enumerate}
		\item[(W)] $F_p(\bar\mu_n,\varepsilon_n)$ is weakly $\dhaus$-consistent if and only if $\varepsilon_n = |1-2q| -  \omega(n^{-1/2})$.
		\item[(S)] $F_p(\bar \mu_n,\varepsilon_n)$ is strongly $\dhaus$-consistent if one has $\liminf_{n\to\infty}(|1-2q|-\varepsilon_n)n^{1/2}(\log \log n)^{-1/2} > 2\sqrt{2q(1-q)}$, and it is not strongly $\dhaus$-consistent (in fact, it is strongly $\dhaus$-inconsistent) if $\limsup_{n\to\infty}(|1-2q|-\varepsilon_n)n^{1/2}(\log \log n)^{-1/2} < 2\sqrt{2q(1-q)}$.
	\end{enumerate}
\end{proposition}

The proof is given in Appendix~\ref{sec:proof_general_results}. We are now ready to prove the strong consistency result, Theorem \ref{thm:consistency}.



\begin{proof}[Proof of Theorem \ref{thm:consistency}]
The key observation in both cases is that our choice of relaxation parameter, along with some simple arithmetic, shows that we have
\begin{equation*}
	\{W_p(\bar \mu_n,x,x')\le \varepsilon_n\} = \left\{\frac{G_n(x,x')}{\sqrt{n \log\log n}} \le \sqrt\frac{n}{\log\log n}(\varepsilon_n -W_p(\mu,x,x'))\right\}
\end{equation*}
for all $x,x'\in X$ and $n\in\Nbb$. In particular, if $x'\in F_p(\mu)$, then
\begin{equation}\label{eqn:key2}
	\left\{(n \log\log n)^{-1/2}G_n(x,x') > n^{1/2}(\log\log n)^{-1/2}\varepsilon_n \right\} \subseteq \{W_p(\bar \mu_n,x,x')> \varepsilon_n\}.
\end{equation}
Conversely, if $x\in F_p(\mu)$, then
\begin{equation}\label{eqn:key3}
	\left\{(n \log\log n)^{-1/2}G_n(x,x')\le  n^{1/2}(\log\log n)^{-1/2}\varepsilon_n \right\} \subseteq  \{W_p(\bar \mu_n,x,x')\le \varepsilon_n\}
\end{equation}
for all $x'\in X$.
Now we proceed to the main proof.

We begin with the easier statement (ii). Suppose that the relaxation parameters $\varepsilon_n$ satisfy the asymptotics $ \limsup_{n\to\infty}\varepsilon_n n^{1/2} (\log\log n)^{-1/2} < \sigma_p(\mu)$ almost surely.
In particular, for any $\delta>0$, there exists $\eta>0$ such that with probability at least $1-\delta$,
\begin{equation*}
	\limsup_{n\to\infty}\varepsilon_n\sqrt{\frac{n}{\log\log n}} < \sigma_p(\mu) - \eta.
\end{equation*}
Denote by $E$ this event. For simplicity, we will use the notation $K^s =(F_p(\mu))^s$ for the $s$-thickening of $F_p(\mu)$. These are compact by the Heine-Borel property of $X$ and because $F_p(\mu)$ is compact. By Lemma \ref{lemma:formula_cp(mu)}, there exists $f\in \Bcal_{K^1,\mu}$, and $x,x'\in F_p(\mu)$ such that $\sqrt 2\cdot f(x,x')>\sigma_p(\mu) - \eta/3$.
Because $f$ is continuous, there exists $0<r<1$ such that the closed ball $B_r(x)$ satisfies $\sqrt 2\cdot f(z,x') > \sigma_p(\mu) - 2\eta/3$ for all $z\in B_r(x)$. Note that we selected $r<1$ so that $B_r(x)\subseteq K^1$. Now by Proposition~\ref{prop:functional_LIL}, there exists an event $F$ with $\P_{\mu}(F) = 1$, on which the following is true:
There is a (random) subsequence $\{n_k\}_{k\in\Nbb}$ such that we have $(2n_k\log\log n_k)^{-1/2}G_{K^1,n_k}\to f$ in the topology of uniform norm convergence on $K^1$. 
Also recall that $G_{K^1,n}$ is just the restriction of $G_n$ to $K^1\times K^1$.
Therefore,
\begin{align*}
	\limsup_{n\to\infty} \min_{z\in B_r(x)}(n\log\log n)^{-1/2}G_n(z,x') 
	&\ge\liminf_{k\to\infty} \min_{z\in B_r(x)}(n_k\log\log n_k)^{-1/2}G_{n_k}(z,x') \\
	&\ge \min_{z\in B_r(x)}\sqrt 2\cdot f(z,x') \ge \sigma_p(\mu) - 2\eta/3 > \sigma_p(\mu) - \eta
\end{align*}
on $F$.
We combine this with \eqref{eqn:key2} to see that on $E\cap F$, $
\min_{z\in B_r(x)}W_p(\bar \mu_n,z,x')>\varepsilon_n $ for infinitely many $n\in\Nbb$.
Also, we have
\begin{equation*}
	\left\{\min_{z\in B_r(x)}W_p(\bar \mu_n,z,x')>\varepsilon_n\right\} 
	\subseteq \{B_r(x)\cap F_p(\bar\mu_n,\varepsilon_n)=\emptyset\}
	\subseteq \{\dhaus(F_p(\bar \mu_n,\varepsilon_n),F_p(\mu)) \ge r\}.
\end{equation*}
Therefore, on $E\cap F$, we have $   \limsup_{n\to\infty}\dhaus(F_p(\bar \mu_n,\varepsilon_n),F_p(\mu)) \ge r.$ Since $\P_{\mu}(E\cap F) \geq 1-\delta $, this shows $\P_{\mu}(\dvechaus(F_p(\bar \mu_n,\varepsilon_n),F_p(\mu)) \to 0) \leq \delta.$	This holds for any $\delta>0$. Therefore, we proved $\P_{\mu}(\dvechaus(F_p(\bar \mu_n,\varepsilon_n),F_p(\mu)) \to 0) =0$.
As a result, $F_p(\bar \mu_n,\varepsilon_n)$ is not strongly $\dhaus$-consistent.

For the harder statement (i), we suppose that almost surely, $\varepsilon_n\to 0$ and
\begin{equation*}
	\liminf_{n\to\infty}\varepsilon_n \sqrt{\frac{n}{\log\log n}} > \sigma_p(\mu)\quad a.s.
\end{equation*}
First, by Theorem~\ref{thm:mean-SLLN}, since almost surely $\varepsilon_n \to 0$, the event $E_1=\{\dvechaus(F_p(\bar\mu_n),F_p(\mu))\to 0\}$ has probability one. Now fix $\delta>0$. By hypothesis on the parameters $\varepsilon_n$, there exists $\eta>0$ such that with probability at least $1-\delta$,
\begin{equation*}
	\liminf_{n\to\infty}\varepsilon_n \sqrt{\frac{n}{\log\log n}} > \sigma_p(\mu) + \eta.
\end{equation*}
Denote by $F_1$ this event. Now by Lemma \ref{lemma:sigma_continuity} we have
\begin{equation*}
	\sqrt 2\cdot \sup_{f\in \Bcal_{K^1,\mu}}\left\{ \max_{x\in F_p(\mu)} \max_{x'\in K^s} f(x,x') \right\} \underset{s\to 0}{\longrightarrow} \sigma_p(\mu).
\end{equation*}
In particular, there exists $0<s<1$ such that
\begin{equation*}
	\sqrt 2\cdot 	\sup_{f\in \Bcal_{K^1,\mu}}\left\{ \max_{x\in F_p(\mu)} \max_{x'\in K^s} f(x,x') \right\} \leq  \sigma_p(\mu) + \frac{\eta}{2}.
\end{equation*}
By Proposition~\ref{prop:functional_LIL}, on an event $E_2$ of full probability, $\{(2n\log\log n)^{-1/2}G_{K^1,n}\}$ is relatively compact and its set of limits is exactly $\Bcal_{K^1,\mu}$. Therefore, and since $F_p(\mu)$ and $K^s$ are compact, there exists $f\in\Bcal_{K^1,\mu}$ such that
\begin{align*}
	\limsup_{n\to\infty}\max_{x\in F_p(\mu)} \max_{x'\in K^s} \frac{G_n(x,x')}{\sqrt{n\log\log n}} &= \sqrt 2\cdot  \max_{x\in F_p(\mu)} \max_{x'\in K^s} f(x,x')\\
	&\leq \sqrt 2\cdot \sup_{f\in \Bcal_{K^1,\mu}}\left\{ \max_{x\in F_p(\mu)} \max_{x'\in K^s} f(x,x') \right\} 
	\leq \sigma_p(\mu) + \frac{\eta}{2}.
\end{align*}
Hence, on $E_2\cap F_1$, for $n$ sufficiently large,
\begin{equation*}
	\max_{x\in F_p(\mu)} \max_{x'\in K^s} \frac{G_n(x,x')}{\sqrt{n\log\log n}} < \sigma_p(\mu) + \frac{3\eta}{4} < \varepsilon_n \sqrt{\frac{n}{\log\log n}}.
\end{equation*}
Hence, on $E_1\cap E_2\cap F_1$, for $n$ sufficiently large, $F_p(\bar\mu_n)\subseteq K^s$ so using \eqref{eqn:key3}, we obtain
\begin{equation*}
		E_1\cap E_2\cap F_1 \subseteq E_1\cap \{\textrm{for suff. large }n\in\Nbb,F_p(\mu)\subseteq F_p(\bar \mu_n,\varepsilon_n)\} 
		\subseteq \{\dhaus(F_p(\mu),F_p(\bar \mu_n,\varepsilon_n))\to 0\}.
\end{equation*}
Thus, $\Pbb_{\mu}(\dhaus(F_p(\mu),F_p(\bar \mu_n,\varepsilon_n))\to 0) \geq \P_{\mu}(E_1\cap E_2\cap F_1) \geq 1-\delta.$ Since this holds for any $\delta>0$, the result is proved.
\end{proof}

We are now ready to prove Theorem \ref{thm:adaptive-consistency} which shows that using adaptive rate to estimate $\sigma_p(\mu)$, one can obtain near-optimal consistent relaxation rates.


\begin{proof}[Proof of Theorem \ref{thm:adaptive-consistency}]
Let $\mu\in\Pcal_{2p-2}(X)$ and fix a sub-optimal relaxation scale $\{\varepsilon_{1,n}\}_{n\in\Nbb}$ satisfying $\varepsilon_{1,n} = \omega(n^{-1/2}(\log\log n)^{1/2})$ and $\varepsilon_{1,n} = o(1)$ almost surely. Then,
\begin{equation*}
	\liminf_{n\to\infty} \varepsilon_{1,n} \sqrt{\frac{n}{\log\log n}} > \sigma_p(\mu),\quad a.s.
\end{equation*}
As a result, Theorem \ref{thm:consistency} implies that $\dvechaus(F_p(\bar\mu_n,\varepsilon_{1,n}),F_p(\mu))\to 0$ and that for sufficiently large $n\in\Nbb$, $F_p(\mu) \subseteq F_p(\bar\mu_n,\varepsilon_{1,n})$, both holding on an event $E$ of full probability.
Then,
\begin{align*}
	E &\subseteq \{\text{for suff. large }n\in\Nbb, F_p(\mu) \subseteq  F_p(\bar\mu_n,\varepsilon_{1,n}) \}\\
	&\subseteq \left\{\text{for suff. large }n\in\Nbb,  \sigma_p(\bar\mu_n,\varepsilon_{1,n}) \geq \sqrt 2 \sup_{x,x'\in F_p(\mu)} \sqrt{R_{\bar\mu_n}(x,x')} \right\}.
\end{align*}
Now denote by $K^\eta := (F_p(\mu))^\eta$ the $\eta$-thickening of $F_p(\mu)$ for $\eta > 0$.
By Lemma \ref{lemma:uniform_convergence_covariance}, on an event $F$ of full probability, one has $R_{\bar\mu_n}\to R_\mu$ uniformly on $K^1\times K^1$, hence
\begin{equation*}
	\lim_{n\to\infty}\sup_{x,x'\in F_p(\mu)} \sqrt{R_{\bar\mu_n}(x,x')} = \sup_{x,x'\in F_p(\mu)} \sqrt{R_\mu(x,x')} = \sigma_p(\mu).
\end{equation*}
As a result, we obtain
\begin{equation}\label{eq:useful_3}
	E\cap F \subseteq \left\{ \liminf_{n\to\infty} \sigma_p(\bar\mu_n,\varepsilon_{1,n}) \geq \sigma_p(\mu) \right\}.
\end{equation}
Next, on the event $E$, since $\dvechaus(F_p(\bar\mu_n,\varepsilon_{1,n}),F_p(\mu))\to 0$, for sufficiently large $n\in N$, we have $F_p(\bar\mu_n,\varepsilon_{1,n})\subseteq K^\eta$.
As a result,
\begin{align*}
	E\cap F &\subseteq F\cap\left\{ \text{for suff. large }n\in\Nbb, \sigma_p(\bar\mu_n,\varepsilon_{1,n})\leq  \sqrt 2 \sup_{x,x'\in K^\eta} \sqrt{R_{\bar\mu_n}(x,x')}  \right\}\\
	&\subseteq \left\{ \limsup_{n\to\infty} \sigma_p(\bar\mu_n,\varepsilon_{1,n}) \leq \sqrt 2 \sup_{x,x'\in K^\eta} \sqrt{R_\mu(x,x')} \right\}
\end{align*}
By continuity of $R_\mu$ on $X^2$ (see Lemma \ref{lemma:continuity-Z-R}), we have $ \sup_{x,x'\in K^\eta} \sqrt{2 R_\mu(x,x')} \to \sigma_p(\mu)$ as $\eta\to 0$. As a result, the previous equation shows that
\begin{equation*}
	E\cap F \subseteq \left\{ \limsup_{n\to\infty} \sigma_p(\bar\mu_n,\varepsilon_{1,n}) \leq \sigma_p(\mu) \right\}.
\end{equation*}
Together with \eqref{eq:useful_3}, we have $\sigma_p(\bar\mu_n,\varepsilon_{1,n})\to\sigma_p(\mu)$ on $E\cap F$.
Thus, on $E\cap F$ which has full probability,
\begin{equation*}
	\lim_{n\to\infty} \varepsilon_{2,n,\delta}\sqrt{\frac{n}{\log\log n}} = (1+\delta)\sigma_p(\mu).
\end{equation*}
Hence, this implies the desired optimality property when $\delta\to 0$.
Further, by Theorem \ref{thm:consistency}, we have that $F_p(\bar\mu_n,\varepsilon_{2,n,\delta})$ is strongly $\dhaus$-consistent. This ends the proof.
\end{proof}

\subsection{Proofs for the phylogenetic application}
\label{subsec:proof_phylo}

We defer the proof of Lemma~\ref{lemma:tropical_HBD}, Lemma~\ref{lemma:linear_frechet_mean}, and Theorem~\ref{thm:phylo-alg} to Appendix~\ref{sec:phylogenomic}, and we provide now a proof of Theorem~\ref{thm:FW-proj-trees} which is of independent interest:

\begin{proof}[Proof of Theorem~\ref{thm:FW-proj-trees}]
It suffices to show $F_1(\mu,\varepsilon) \cap \mathcal U_N \neq \emptyset$, since then the Fr\'echet functional attains the same minimum on both the space of equidistant phylogenetic trees $\Ucal_N$ and the ambient tropical projective space.
To do this, we write $E = \{\{i,j\}:1\leq i<j\leq N\}$ for the set of all pairs of leaves.
Then we consider the optimization problem
    \begin{equation}\label{eq:opt_problem}
    \begin{cases}
        \textnormal{minimize} &\sum_{e\in E} (u_e-u_{\{1,2\}})\\
        \textnormal{over}& u\in F_1(\mu).
    \end{cases}
    \end{equation}
(Note that the objective is invariant under the action of $\Rbb\mathbf{1}$, as it must be.)
Recall that $F_1(\mu)$ is compact (Lemma~\ref{lemma:Fp-cpt}) and the objective is continuous, so there exists a minimizer $u^{\ast}$ of \eqref{eq:opt_problem}.

We claim $u^{\ast}\in\Ucal_N$.
Assume for the sake of contradiction that this is not the case, so that there exist distinct $i,j,k\in[N]$ with $u^\ast_{\{i,k\}}> \max(u^\ast_{\{i,j\}}, u^\ast_{\{j,k\}})$.
Now define $u^{\ast\ast}\in\Rbb^{E}$ via
\begin{equation*}
    u^{\ast\ast}_e := \begin{cases}
        u_e &\textnormal{ if } e\neq \{i,k\}, \\
        \max(u^\ast_{\{i,j\}}, u^\ast_{\{j,k\}}) &\textnormal{ if } e= \{i,k\}.
    \end{cases}
\end{equation*}
Observe that, for any $w\in \Ucal_N$, we have
    \begin{align*}
        u^\ast_{\{i,k\}} - w_{\{i,k\}} > \max(u^\ast_{\{i,j\}}, u^\ast_{\{j,k\}}) - \max(w_{\{i,j\}}, w_{\{j,k\}})&\geq \min (u^\ast_{\{i,j\}}- w_{\{i,j\}} , u^\ast_{\{j,k\}} -w_{\{j,k\}} ).
    \end{align*}
This means that, for both $u\in \{u^{\ast},u^{\ast\ast}\}$, the map $e\mapsto u_e-w_e$ is minimized at a coordinate different than $e=\{i,k\}$.
Since $u^{\ast\ast}_{\{i,k\}} < u^\ast_{\{i,k\}}$ and since $u^\ast$ and $u^{\ast\ast}$ coincide on all coordinates other than $e=\{i,k\}$, we conclude
    \begin{equation*}
        \dtrop(u^\ast,w) = \max_{e\in E} (u^\ast_e - w_e) - \min_{e\in E} (u^\ast_e - w_e) \geq \max_{e\in E} (u^{\ast\ast}_e - w_e) - \min_{e\in E} (u^{\ast\ast}_e - w_e) = \dtrop(u^{\ast\ast},w).
    \end{equation*}
    for all $w\in\Ucal_N$.
    Now note that, for all $u\in\Ucal_N$:
    \begin{equation*}
        \begin{split}
            W_1(\mu,u^{\ast\ast},u) &= \int_{\Ucal_N}(\dtrop(u^{\ast\ast},w)-\dtrop(u,w))\, d\mu(w) \\
            &= \int_{\Ucal_N}(\dtrop(u^{\ast},w)-\dtrop(u,w))\, d\mu(w) = W_1(\mu, u^{\ast}, u) \le \varepsilon.
        \end{split}
    \end{equation*}
    This implies $u^{\ast\ast}\in F_1(\mu)$.
    However, we have
    \begin{equation*}
        \sum_{e\in E} (u^\ast_e-u^\ast_{\{1,2\}}) > \sum_{e\in E} u^{\ast\ast}_e-u^{\ast\ast}_{\{1,2\}},
    \end{equation*}
    which contradicts the optimality of $u^\ast$ for the optimization problem \eqref{eq:opt_problem}.
    This shows $u^\ast\in F_1(\mu,\varepsilon)\cap \Ucal_N$, which ends the proof of the result.
\end{proof}

	
\subsection*{Acknowledgments}
	
We would like to thank Guillaume Goujard for facilitating this collaboration. We also thank the anonymous reviewers who significantly improved the quality of the paper with their suggestions.

\subsection*{Funding}

This material is based upon work for which  MB was partly funded by Office of Naval Research grant N00014-18-1-2122 and AQJ was funded by National Science Foundation grant DGE 1752814.

\bibliographystyle{alpha} 
\bibliography{FrechetMeanRelaxation}       

\begin{appendix}
	
	\begin{center}
		\textsc{Appendices}
	\end{center}

First, in Appendix~\ref{sec:mble}, we establish the required measurability of the estimators studied in the main body.
Then, we complete the omitted proofs from the main body, for the probabilistic setting (Appendix~\ref{sec:probabilistic_setting}), the results on strong laws of large numbers (Appendix~\ref{sec:SLLN}), for the main general consistency results (Appendix~\ref{sec:proof_general_results}), and for the phylogenetic application (Appendix~\ref{sec:phylogenomic}).
\comment{Last, we provide in Appendix~\ref{sec:special cases} some examples of the way that non-parametric assumptions can allow one to obtain sub-optimal but consistent relaxation rates which are easier to work with, including some simplifications of the two-step estimator described in Theorem~2.4.}

\comment{
We recall two simple inequalities on powers of a distance. First given a metric space $(X,d)$, for $q\geq 0$, the constant $c_q := \max\{1,2^{q-1}\}$ is such that we have
\begin{equation}\label{eqn:ineq-1}
	d^q(x,y)\le c_q(d^q(x,x')+d^q(x',y))
\end{equation}
for all $x,x',y\in X$. Next, for $p\geq 1$ we have
\begin{equation}\label{eqn:ineq-2}
	|d^p(x,y)-d^p(x',y)|\le pd(x,x')(d^{p-1}(x,y)+d^{p-1}(x',y))
\end{equation}
for all $x,x',y\in X$. 
}

We introduce some notions of set-valued convergence that will be useful to state our strong-law-of-large-number results.
First of all, for any $d$-closed (ferm\'e) subset $C\in \closed(X)$, define
\begin{equation*}
	d(x,C) := \min_{x'\in C}d(x,x')
\end{equation*}
for the \textit{metric projection of $x$ onto $C$}; we take $d(\cdot,\emptyset)\equiv \infty$ by convention, and note that $d(\cdot,C):X\to [0,\infty]$ is continuous for all $C\in \closed(X)$. For a sequence $\{C_n\}_{n\in\Nbb}$ in $\closed(X)$, we write
\begin{align*}
	\kurouter_{n\in\Nbb}C_n &:= \left\{x\in X: \liminf_{n\to\infty}d(x,C) = 0\right\} \\
	\kurinner_{n\in\Nbb}C_n &:= \left\{x\in X: \limsup_{n\to\infty}d(x,C) = 0\right\}
\end{align*}
for their \emph{Kuratowski upper limit} and \emph{Kuratowski lower limit}, respectively.
We always have $\kurinner_{n\in\Nbb}C_n \subseteq \kurouter_{n\in\Nbb}C_n$, and, if $\kurinner_{n\in\Nbb}C_n = \kurouter_{n\in\Nbb}C_n$, then we write $\kurlimit_{n\in\Nbb}C_n$ for their common value and we say that $\{C_n\}_{n\in\Nbb}$ \emph{converges in the Kuratowski sense}.
Let us also write $\cpt(X)$ for the collection of non-empty $d$-compact subsets of $X$, and let us recall the definitions of $\dvechaus$ and $\dhaus$ given in Section~\ref{sec:intro}.
We note in passing (see \cite{EvansJaffeSLLN}) that convergence in the Hausdorff metric is much stronger than convergence in the Kuratowski sense.

As a remark, most objects in this paper depend on the choice of exponent $p\ge 1$. 
In general, objects appearing in the statements of our main theorems $(F_p(\mu), \sigma_p(\mu)$, etc.) display their dependence on $p$ while objects appearing only in the body of the paper or in proofs $(Z_i, R_{\mu}$, etc.) do not display their dependence on $p$.

\section{Measurability concerns}\label{sec:mble}

In order to study any probabilistic questions about relaxed empirical Fr\'echet mean sets and their convergence in the Hausdorff metric, we must establish some basic measurability properties.
For example, we must show that the events appearing in our main theorems, like $\{\dhaus(F_p(\bar \mu_n,\varepsilon_n),F_p(\mu)) \ge \delta\}$ and $\{\dhaus(F_p(\bar \mu_n,\varepsilon_n),F_p(\mu)) \to 0\}$, are in fact measurable.
This is similar to the work of \cite[Section~3]{EvansJaffeSLLN}, except our work is slightly simplified since we only care about the case of HB spaces.

For this section, let us adopt some notation.
As always, fix $p \ge 1$ and let $(\Omega,\F,\P_{\mu})$ be a probability space on which is defined a sequence $Y_1,Y_2,\ldots$ of $X$-valued random variable with common distribution $\mu\in \Pcal_{p-1}(X)$.
Note that we do not assume that $\F$ is $\P$-complete.
Then, recall that by the \emph{Effros $\sigma$-algebra} $\Ecal(X)$ on $\closed(X)$ we mean the $\sigma$-algebra generated by the sets $\{C\in \closed(X): C\cap B_r^{\circ}(x) \neq \emptyset\}$ where $x$ ranges over all points in $X$ and $r$ ranges over all non-negative real numbers.
An $\Ecal(X)$-measurable map $\Omega\to\closed(X)$ is called a \emph{random set}.
We also see that $\Ecal(X)$ induces a $\sigma$-algebra on $\cpt(X)$ which (by a slight abuse of notation) we also denote $\Ecal(X)$, so an $\Ecal(X)$-measurable map $\Omega\to\cpt(X)$ will also be called a random set.

Our first result shows that the relaxed restricted Fr\'echet mean sets are in fact random sets.
Note that this result requires us to invoke 
Lemma~\ref{lemma:Fp-cpt},
showing the compactness of relaxed restricted Fr\'echet means, which is proved in the next section; this is not a fault in the logic, but we have chosen this ordering for the sake of exposition.

\begin{lemma}\label{lemma:relaxed_constrained_frechet_means_measurable}
	Let $(X,d)$ be a HB space, and $p\ge 1$.
	Then, for any $n\in\Nbb$, any random set $C:\Omega\to\closed(X)$, and any random relaxation scale $\varepsilon :\Omega\to [0,\infty)$, the relaxed restricted Fr\'echet mean set $F_p(\bar \mu_n,C,\varepsilon):\Omega\to \cpt(X)$ is a random set.
\end{lemma}

\begin{proof}
	Since $F_p(\bar\mu_n,C,\varepsilon)$ is compact from Lemma~\ref{lemma:Fp-cpt},
	we need to show that the map $F_p(\bar\mu_n,C,\varepsilon):\Omega\to \cpt(X)$ is $\Ecal(X)$-measurable.
	Because $X$ is HB, let $\{x_k\}_{k\in \Nbb}$ be a countable dense set in $X$.
	We first show that the function $f(x):=\sup_{x'\in C} W_p(\bar\mu_n,x,x') = W_p(\bar\mu_n,x) - \inf_{x'\in C}W_p(\bar\mu_n, x')$ is Lipschitz on $B_r(z)$. To do so, it suffices to show that $x\mapsto W_p(\bar\mu_n,x)$ is Lipschitz on $B_r(z)$.
	Indeed, for any $x_1,x_2\in B_r(z)$, we use \eqref{eqn:ineq-2} then \eqref{eqn:ineq-1} to obtain
	\begin{align*}
		|W_p(\bar\mu_n,x_1)-W_p(\bar\mu_n,x_2)|&\leq p d(x_1,x_2) \frac{1}{n}\sum_{k=1}^n(d^{p-1}(x_1,Y_k) + d^{p-1}(x_2,Y_k))\\
		&\leq 2p c_{p-1}(W_p(\bar\mu_n,z) + r^{p-1}) d(x_1,x_2).
	\end{align*}
	As a result, $f$ is Lipschitz (with random Lipschitz constant) on $B_r(z)$.
	
	Next, let $C_j= \{x \in X: d(x,C)<2^{-j}\}$ for any $j\in \Nbb$. We write       
	\begin{align*}
		&F_p(\bar \mu_n,C,\varepsilon)^{-1}(\{K\in \cpt(X): K\cap B_r^\circ(z)\neq\emptyset\}) \\
		&= \{F_p(\bar\mu_n,C,\varepsilon)\cap B_r^\circ(z)\neq\emptyset \}\\
		&= \{\exists x\in C\cap B_r^\circ(z),f(x) \leq \varepsilon \}\\
		&= \bigcup_{i\in\Nbb}\{ \exists x\in  C\cap  B_{r-2^{-i}}(z),f(x) \leq \varepsilon \} \\
		&= \bigcup_{i\in\Nbb}\bigcap_{j\in\Nbb}\{ \exists x\in  C_j\cap B_{r-2^{-i}}^\circ(z), f(x)< \varepsilon +2^{-j}\},
	\end{align*}
	but it remains to justify the last equality.
	Indeed, the ``$\subseteq$'' inclusion is direct from the fact that $C\subseteq C_j$.
	For the ``$\supseteq$'' inclusion, suppose that for all $j\in\Nbb$, there exists $x^j \in C_j\cap B_{r-2^{-i}}^\circ(z)$, with $f(x^j)<\varepsilon+2^{-j}$.
	Because $B_{r-2^{-i}}(z)$ is compact from the HB property, there is $x\in  B_{r-2^{-i}}(z)$ and a subsequence $\{j_k\}_{k\in\Nbb}$ such that $x^{j_k}\to x$.
	By construction, $d(x^{j_k},C)<2^{-j_k}$, hence $x\in C$ because $C$ is closed.
	Finally, by continuity of $f$, we have $f(x)\leq \varepsilon$.
	Therefore, it suffices to show that $\{ \exists x\in  C_j\cap B_{r-2^{-i}}^\circ(z), f(x)< \varepsilon +2^{-j}\}$ is measurable for each $i,j\in\Nbb$.
	
	To show this, note that $C_{j}\cap B_{r-2^{-i}}^\circ(z)$ is open, hence $\{x_k: k\in \Nbb,x_k\in C_j\cap B_{r-2^{-i}}^\circ(z)\}$ is dense in $C_j\cap B_{r-2^{-i}}^\circ(z)$.
	Therefore, we have
	\begin{align*}
		&\{ \exists x\in  C_j\cap B_{r-2^{-i}}^\circ(z), f(x)< \varepsilon +2^{-j}\}\\
		&=\bigcup_{\substack{k\in\Nbb \\ x_k\in  B_{r-2^{-i}}^\circ(z)}} \left(\{x_k\notin C_j\} \cup \{f(x_k)< \varepsilon + 2^{-j}\} \right)\\
		&=\bigcup_{\substack{k\in\Nbb \\ x_k\in  B_{r-2^{-i}}^\circ(z)}} \left(\{C\cap B_{2^{-j}}^\circ(x_k)=\emptyset\} \cup \{\forall x'\in C, W_p(\bar\mu_n, x_k, x') <\varepsilon + 2^{-j} \} \right)
	\end{align*}
	by continuity of $f$.
	By definition of $\Ecal(X)$, it further suffices to check that $\{\forall x'\in C, W_p(\bar\mu_n, x_k, x') <\varepsilon + 2^{-j} \}$ is measurable for all $j,k\in\Nbb$.
	
	To do this, let $ r_n = \max\{d(x_k,Y_m):1 \le m \le n\}$ and $R_n = (2c_p)^{1/p} r_n$.
	We claim that we have
	\begin{equation}\label{eqn:meas-3}
		\begin{split}
			\{\forall &x'\in C, W_p(\bar\mu_n, x_k, x') <\varepsilon + 2^{-j} \} \\
			&=  \bigcup_{\ell\in\Nbb} \left\{\forall x'\in C_{\ell}\cap B_{R_n}^\circ(x_k), W_p(\bar\mu_n,x_k,x') \leq  \varepsilon + 2^{-j} - 2^{-\ell}\right\}.
		\end{split}
	\end{equation}
	for all $j,k\in\Nbb$.
	To prove this, we will need the following bound:
	For all $x'\notin B_{R_n}^\circ(x_k)$, we use \eqref{eqn:ineq-1} to get
	\begin{equation}\label{eqn:meas-2}
		\begin{split}
			W_p(\bar\mu_n,x_k,x') &= \frac{1}{n}\sum_{m=1}^n (d^p(x_k,Y_m) - d^p(x',Y_m))\\
			&\leq \frac{1}{n}\sum_{m=1}^n \left(d^p(x_k,Y_m) - \left(\frac{1}{c_p}d^p(x',x_k) - d^p(x_k,Y_m)\right)\right)\\
			&\leq 2r_n^p - \frac{R_n^p}{c_p} = 0.
		\end{split}
	\end{equation}
	Now we can prove \eqref{eqn:meas-3}.
	To see the ``$\supseteq$'' inclusion, suppose that the right side holds for some $\ell\in\Nbb$.
	Now to show that the left side holds, for arbitrary $x'\in C$, we consider two cases:
	If $x'\in B_{R_n}^\circ(x_k)$, then the right side along with $C\subseteq C_{\ell}$ implies $W_p(\bar \mu_n,x_k,x') \le \varepsilon+2^{-j}-2^{-\ell} < \varepsilon + 2^{-j}$.
	Otherwise, $x'\notin B_{R_n}^\circ(x_k)$, and we have $W_p(\bar \mu_n,x_k,x')\le 0<\varepsilon+2^{-j}$ by \eqref{eqn:meas-2}.
	To see the ``$\subseteq$'' inclusion, let us suppose that the left side holds.
	Since $B_{R_n}(x_k)$ is compact by the HB property, there exists $x^\star_k\in C\cap B_{R_n}(x_k)$ such that $W_p(\bar\mu_n,x_k,x^\star_k) = \sup_{x'\in C\cap B_{R_n}(x_k)} W_p(\bar\mu_n,x_k,x')$.
	So, by the left side of \eqref{eqn:meas-3}, there exists some $\delta > 0$ such that $\sup_{x'\in C\cap B_{R_n}(x_k)} W_p(\bar\mu_n, x_k, x') \leq \varepsilon + 2^{-j} - \delta$. 
	From before, we know that $x'\mapsto W_p(\bar\mu_n,x_k,x')$ is Lipschitz on the ball $B_{R_n}^\circ(x_k)$; we denote its (random) Lipschitz constant as $M_n$.
	In particular, with $\eta_n = \delta/(2M_n)$ and $\ell =  \lceil \log_2(2/\delta)\rceil $, we obtain
	\begin{align*}
		\sup_{x'\in C_{\ell}\cap B_{R_n}^\circ(x_k)} &W_p(\bar\mu_n, x_k, x') \\
		&\le \sup_{x'\in C\cap B_{R_n}^\circ(x_k)} W_p(\bar\mu_n, x_k, x') + M_n\eta_n \\
		&\leq \varepsilon + 2^{-j} - \delta + M_n\eta_n \\
		&= \varepsilon + 2^{-j} - \delta/2 \\
		&\leq \varepsilon + 2^{-j} -2^{-\ell}
	\end{align*}
	This proves the second inclusion and establishes \eqref{eqn:meas-3}.
	Therefore, it suffices to show that
	\begin{equation*}
		\left\{\forall x'\in C_{\ell}\cap B_{R_n}^\circ(x_k), W_p(\bar\mu_n,x_k,x') \leq  \varepsilon + 2^{-j} - 2^{-{\ell}}\right\}
	\end{equation*}
	is measurable for all $j,k,\ell\in\Nbb$.
	
	Finally, to do this, let us use that $\{x_m: m\in\Nbb, x_m\in C_{\ell}\cap B_{R_n}^{\circ}(x_k)\}$ is dense in $C_{\ell}\cap B_{R_n}^{\circ}(x_k)$ and that $x'\mapsto W_p(\bar \mu_n,x_k,x')$ is continuous.
	Then we get
	\begin{align*}
		&\left\{\forall x'\in C_{\ell}\cap B_{R_n}^\circ(x_k), W_p(\bar\mu_n,x_k,x') \leq  \varepsilon + 2^{-j} - 2^{-\ell}\right\} \\
		&= \left\{\forall (m\in\Nbb,x_m\in C_{\ell}\cap B_{R_n}^\circ(x_k)), W_p(\bar\mu_n,x_k,x_m) \leq  \varepsilon + 2^{-j} - 2^{-\ell}\right\} \\
		&= \bigcap_{m\geq 1} (\{x_m \notin  C_{\ell}\cap B_{R_n}^\circ(x_k)\} \cup\{W_p(\bar\mu_n,x_k,x_m)  \leq  \varepsilon + 2^{-j} - 2^{-\ell}\}) \\
		&=\bigcap_{m\geq 1} (\{C\cap B_{2^{-\ell}}^\circ(x_m) = \emptyset \} \cup \{d(x_m,x_k) \geq R_n\} \cup\{W_p(\bar\mu_n,x_k,x_m)  \leq  \varepsilon + 2^{-j} - 2^{-\ell}\})
	\end{align*}
	Recalling the definitions of $R_n$ and $W_p(\bar\mu_n,x_k,x_m)$, we observe that the last term is measurable. 
	Thus, the proof is complete.
\end{proof}

Our next result shows that the one-sided Hausdorff distance is a measurable function of random sets.
\begin{lemma}\label{lemma:dhaus_measurable}
	Let $(X,d)$ be a HB space.
	Then, for any random sets $K,K':\Omega\to\cpt(X)$, the map $\dvechaus(K,K'):\Omega\to \R$ is Borel-measurable.
\end{lemma}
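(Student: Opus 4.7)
The plan is to show that for each $t \ge 0$, the event $\{\dvechaus(K,K') > t\}$ lies in $\F$; this, together with right-continuity of the indicators in $t$, will give Borel measurability of $\dvechaus(K,K')$ itself. The key idea is to replace the supremum over the random compact set $K$ by a countable supremum over a fixed dense set $\{x_k\}_{k\in\N}$ in $X$ (which exists because the HB property forces $X$ to be separable), exploiting the continuity in $x$ of $x \mapsto d(x,K')$ and the closedness of $K$.

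Concretely, I would first recall that for any fixed $x \in X$, the map $\omega \mapsto d(x, K'(\omega))$ is Borel measurable, since $\{d(x,K') < r\} = \{K' \cap B_r^\circ(x) \neq \emptyset\} \in \Ecal(X)$ by definition, and sub-level/super-level sets for non-strict/strict inequalities are then obtained by countable unions and complements. The same holds for the map $\omega \mapsto d(x, K(\omega))$. I would then establish the set-theoretic identity
\begin{equation*}
\{\dvechaus(K,K') > t\} \;=\; \bigcup_{m\in\N}\bigcup_{k\in\N}\Bigl(\{d(x_k,K) < 1/m\} \cap \{d(x_k,K') > t + 1/m\}\Bigr).
\end{equation*}
Since each event on the right is measurable by the preceding paragraph, the left side is then measurable as well.

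For the ``$\supseteq$'' inclusion, one uses that $K$ is compact, so $d(x_k,K) < 1/m$ implies the existence of $y \in K$ with $d(x_k,y) < 1/m$; combined with $d(x_k,K') > t + 1/m$ and the triangle inequality this yields $d(y,K') > t$, so $\dvechaus(K,K') > t$. For the ``$\subseteq$'' inclusion, if $\dvechaus(K,K') > t$, pick $x \in K$ with $d(x,K') > t$ and choose $m$ large enough that $d(x,K') > t + 3/m$; then by density pick $k$ with $d(x,x_k) < 1/m$, so that $d(x_k,K) \le d(x_k,x) < 1/m$ and $d(x_k,K') \ge d(x,K') - d(x,x_k) > t + 2/m > t + 1/m$, as required.

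The only subtle point is getting the inequalities to line up so that a single countable index $(m,k)$ suffices; this is a standard density-and-triangle-inequality argument and should cause no serious difficulty. No further machinery is needed: the argument only uses the definition of $\Ecal(X)$, separability of $X$, and compactness of $K$ (to promote ``$d(x_k,K) < 1/m$'' into an actual point of $K$ within distance $1/m$ of $x_k$).
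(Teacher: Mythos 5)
Your proposal is correct and rests on the same ingredients as the paper's proof: a countable dense sequence $\{x_k\}_{k\in\N}$ (available since HB spaces are separable), Effros-measurability of the events $\{d(x_k,K)<1/m\}=\{K\cap B^{\circ}_{1/m}(x_k)\neq\emptyset\}$ and $\{d(x_k,K')>t+1/m\}$, and a triangle-inequality approximation that transfers the supremum over the compact set $K$ to dense points lying close to $K$. The only difference is organizational: you work with strict super-level sets $\{\dvechaus(K,K')>t\}$ via a single double union over $(m,k)$, while the paper works with sub-level sets through the open thickenings $K_n=\{x:d(x,K)<2^{-n}\}$ and a two-stage reduction; both set identities check out, so your argument goes through.
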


\begin{proof}
	For any $n\in\Nbb$, let $K_n = \{x\in X: d(x, K)<2^{-n}\}$.
	We claim for all $\beta > 0$ that
	\begin{equation}\label{eqn:mble-1}
		\left\{ \max_{x\in K}  d(x,K') < \beta\right\}=\bigcup_{n\in\Nbb} \left\{ \sup_{x\in K_n} d(x,K') < \beta\right\}.
	\end{equation}
	To prove this, note that one direction is immediate from the inclusion $K\subseteq K_n$.
	For the other direction, note that if $\max_{x\in K} d(x,K') < \beta$, then for any $n\in\Nbb$ such that $2^{-n}< \beta - \max_{x\in K_n} d(x,K')$ and any $y\in K_n$, there exists $z\in K$ with $d(y,z)< 2^{-n}$.
	As a result
	\begin{equation*}
		d(y,K') \leq d(y,z) + d(z,K') < 2^{-n} + \max_{x\in K} d(x,K') < \beta,
	\end{equation*}
	as needed.
	Now, fix $\alpha\in\Rbb$, and use \eqref{eqn:mble-1} to get:
	\begin{align*}
		\{\dvechaus(K,K') \leq \alpha\} &= \bigcap_{k\in\Nbb}\left\{ \dvechaus(K,K') < \alpha + 2^{-k}\right\}\\
		&= \bigcap_{k\in\Nbb} \left\{ \max_{x\in K}  d(x,K') < \alpha + 2^{-k}\right\}\\
		&= \bigcap_{k\in\Nbb} \bigcup_{n\in\Nbb} \left\{ \sup_{x\in K_n} d(x,K') < \alpha + 2^{-k}\right\}.
	\end{align*}
	Thus, if we show that
	\begin{equation*}
		\left\{ \sup_{x\in K_n} d(x,K') < \alpha + 2^{-k}\right\}
	\end{equation*}
	is measurable for all $k,n\in\Nbb$, then it follows that $\dvechaus(K,K'):\Omega\to\R$ is Borel-measurable.
	To do this, we use that $X$ is HB to get a dense sequence $\{x_i\}_{i\in\Nbb}$ of $X$.
	Since $K_n = \bigcup_{x\in K}B_{2^{-n}}^{\circ}(x)$ is open, it follows that $\{x_i:i\in\Nbb, x_i\in K_n\}$ is dense in $K_n$.
	Thus, since the function $x\in X\mapsto d(x,K')$ is continuous we have $ \sup_{x\in K_n}  d(x,K') =  \sup\{d(x_i,K'):i\in\Nbb,x_i\in K_n\} $.
	Therefore,
	\begin{align*}
		&\left\{ \sup_{x\in K_n} d(x,K') < \alpha + 2^{-k}\right\} \\
		&= \bigcap_{i\in\Nbb} \left(\{x_i\notin K_n\}\cup\left\{ d(x_i,K') < \alpha + 2^{-k}\right\}\right) \\
		&= \bigcap_{i\in\Nbb} \left(\{d(x_i,K)\geq 2^{-n}\}\cup\left\{ d(x_i,K') < \alpha + 2^{-k}\right\}\right)\\
		&=\bigcap_{i\in\Nbb} \left(\{ K \cap B_{2^{-n}}^{\circ}(x_i) = \emptyset\}\cup\left\{ K'\cap B_{\alpha + 2^{-k}}^{\circ}(x_i) \neq\emptyset \right\}\right).
	\end{align*}
	By definition of $\Ecal(X)$, the last term is measurable, hence we showed that the map $\dvechaus(K, K'):\Omega\to \Rbb$ is Borel-measurable.
\end{proof}

These results together yield the measurability of most events of interest in the paper. We leave the remaining details to the interested reader, but we note that this primarily follows from Lemma~\ref{lemma:relaxed_constrained_frechet_means_measurable} and two applications of Lemma~\ref{lemma:dhaus_measurable}, which imply that $\dhaus(F_p(\bar \mu_n,\varepsilon),F_p(\mu)):\Omega\to\R$ is Borel-measurable for any random relaxation scale $\varepsilon \ge 0$.
For example, this implies that for any random relaxation rate $\varepsilon_n$, the event $\{\dhaus(F_p(\bar \mu_n,\varepsilon_n),F_p(\mu))\to 0\}$ is measurable.

The only remaining measurability concern is to show that the adaptive relaxation rate $\varepsilon_{2,n,\delta}$ of Theorem~\ref{thm:adaptive-consistency}
is in fact Borel-measurable.
This is established in the following.

\begin{lemma}
	Let $(X,d)$ be a HB space, and $p\ge 1$.
	Then, for any $n\in\Nbb$ and any random relaxation scale $\varepsilon:\Omega\to [0,\infty)$, the parameter $\sigma_p(\bar \mu_n,\varepsilon):\Omega\to [0,\infty)$ is Borel-measurable.
\end{lemma}

\begin{proof}
	By Lemma~\ref{lemma:relaxed_constrained_frechet_means_measurable}, it suffices to show that for any random set $K:\Omega\to \cpt(X)$ and any $\alpha \in\R$, the event $\{\sup_{x,x'\in K} R_{\bar \mu_n}(x,x') < \alpha\}$ is measurable.
	The approach is similar to the proofs of the preceding measurability results:
	We write $K_m := \bigcup_{x\in K}B_{2^{-m}}^{\circ}(x)$ for each $m\in\Nbb$, and, since $(X,d)$ is a HB space, we can also get a sequence $\{x_i\}_{i\in\Nbb}$ which is dense in $X$.
	Then using the fact that $R_{\bar \mu_n}$ is Lipschitz on compact sets (Lemma~\ref{lem:Z-R-cts}), we have
	\begin{align*}
		&\left\{\sup_{x,x'\in K} R_{\bar \mu_n}(x,x') < \alpha\right\} \\
		&= \bigcup_{m\in\Nbb}\left\{\sup_{x,x'\in K_m} R_{\bar \mu_n}(x,x') < \alpha\right\} \\
		& = \bigcup_{m\in \Nbb}\bigcup_{i\in\Nbb} \left\{\sup_{x,x'\in K_m} R_{\bar \mu_n}(x,x') \leq \alpha - 2^{-i}\right\} \\ 
		&= \bigcup_{m\in\Nbb}\bigcup_{i\in\Nbb}\left\{\sup\{R_{\bar \mu_n}(x_k,x_{\ell}): k,\ell\in\Nbb, x_k,x_{\ell}\in K_m\} \leq  \alpha - 2^{-i}\right\} \\
		&= \bigcup_{m\in\Nbb}\bigcup_{i\in \Nbb}\bigcap_{k,\ell\in\Nbb}(\{x_k,x_{\ell}\notin K_m\}\cup\{R_{\bar \mu_n}(x_k,x_{\ell}) \leq \alpha - 2^{-i}\}).
	\end{align*}
	Note that $\{R_{\bar \mu_n}(x_k,x_{\ell}) \leq \alpha - 2^{-i}\}$ is easily shown to be measurable, and that
	\begin{equation*}
		\{x_k,x_{\ell}\notin K_m\} = \{K\cap B_{2^{-m}}^{\circ}(x_{k}) = \emptyset\}\cap\{K\cap B_{2^{-m}}^{\circ}(x_{\ell}) = \emptyset\}
	\end{equation*}
	is measurable by the definition of $\Ecal(X)$.
	This finishes the proof.
\end{proof}

\section{Proofs for the probabilistic setting (Subsection~\ref{subsec:prob})}
\label{sec:probabilistic_setting}

The following result clarifies an important condition for pointwise convergence of the Fr\'echet functional.

\begin{lemma}\label{lem:UI}
	Suppose that $(X,d)$ is a metric space, $p\ge 1$, $\alpha \ge 1$ is an integer, and that $\{\mu_n\}_{n\in\Nbb}$ and $\mu$ in $\Pcal_{\alpha(p-1)}(X)$ have $\mu_n\to \mu$ in the weak topology.
	If for some $x_0\in X$,
	\begin{equation*}
		\int_{X}d^{\alpha (p-1)}(x_0,y) \, d\mu_n(y) \to \int_{X}d^{\alpha (p-1)}(x_0,y)\, d\mu(y),
	\end{equation*}
	then for all $x,x'\in X$,
	\begin{equation*}
		\int_{X}(d^p(x,y)-d^p(x',y))^{\alpha}\, d\mu_n(y) \to \int_{X}(d^p(x,y)-d^p(x',y))^{\alpha}\, d\mu(y).
	\end{equation*}
\end{lemma}

\begin{proof}
	Fix $x_0\in X$, and let $x,x'\in X$ be arbitrary.
	Let $Y_n$ and $Y$ denote random elements of $X$ with laws $\mu_n$ and $\mu$, respectively, for each $n\in\Nbb$, defined on the same probability space with expectation $\E$.
	Then we have $Y_n\to Y$ in distribution, so, by the continuous mapping theorem \cite[Lemma~5.3]{kallenberg1997foundations}, we have $(d^p(x,Y_n)-d^p(x',Y_n))^{\alpha}\to (d^p(x,Y)-d^p(x',Y))^{\alpha}$ in distribution.
	Also note by assumption that 
	\begin{equation*}
		\E[d^{\alpha (p-1)}(x_0,Y_n)] = \int_{X} d^{\alpha (p-1)}(x_0,y)\, d\mu_n(y) 
	\end{equation*}
	converges as $n\to\infty$. Therefore, from \cite[Lemma~5.11]{kallenberg1997foundations}, the sequence $\{d^{\alpha (p-1)}(x_0,Y_n)\}_{n\in\Nbb}$ is uniformly integrable.
	Now use \eqref{eqn:ineq-2} then \eqref{eqn:ineq-1} to get
	\begin{equation*}
		\begin{split}
			|d^{p}(x,Y_n)-d^p(x',Y_n)| &\le pd(x,x')(d^{p-1}(x,Y_n)+d^{p-1}(x',Y_n)) \\
			&\le pc_{p-1}d(x,x')(d^{p-1}(x,x_0)+d^{p-1}(x',x_0)+2d^{p-1}(x_0,Y_n)).
		\end{split}
	\end{equation*}
	Consequently, by convexity we have
	\begin{equation*}
		\begin{split}
			|d^{p}&(x,Y_n)-d^p(x',Y_n)|^{\alpha} \\
			&\le (pc_{p-1})^{\alpha}3^{\alpha-1} d^{\alpha}(x,x')(d^{\alpha(p-1)}(x,x_0) +d^{\alpha(p-1)}(x',x_0)+2^{\alpha}d^{\alpha(p-1)}(x_0,Y_n)) \\
		\end{split}
	\end{equation*}
	In particular, the sequence $\{|d^{p}(x,Y_n)-d^{p}(x',Y_n)|^{\alpha}\}_{n\in\Nbb}$ is uniformly integrable.
	Applying \cite[Lemma~5.11]{kallenberg1997foundations} once more proves the result.
\end{proof}

The following simple lemma shows that $R_\mu$ is well defined.

\begin{lemma}\label{lemma:R_well_defined}
	Let $(X,d)$ be a metric space, $p\geq 1$ and $\mu\in\Pcal_{2p-2}(X)$. Then, for any $x,x',x'',x'''\in X$, the random variable $Z_{1}(x,x')Z_{1}(x'',x''')$ is integrable.
\end{lemma}

\begin{proof}
	First, note that $2p-2\geq p-1$, hence $\mu\in\Pcal_{p-1}(X)$ and as a result $Z_1$ is well defined and both $d^p(x,Y_1)-d^p(x',Y_1)$, $d^p(x'',Y_1)-d^p(x''',Y_1)$ are integrable. It remains to check that $(d^p(x,Y_1)-d^p(x',Y_1))(d^p(x'',Y_1)-d^p(x''',Y_1))$ is integrable. Next, fix $x_0\in X$ and note that
	\begin{align*}
		|d^p(x,Y_1) - d^p(x',Y_1)| &\leq pd(x,x') (d^{p-1}(x,Y_1) + d^{p-1}(x',Y_1))\\
		&\leq pd(x,x') c_{p-1} (2d^{p-1}(x_0,Y_1) + d^{p-1}(x,x_0) + d^{p-1}(x',x_0)).
	\end{align*}
	Similarly, we obtain an upper bound for $|d^p(x'',Y_1)-d^p(x''',Y_1)|$. Now note that the random variable $d(x_0,Y_1)^{2p-2}$ is integrable since $\mu\in\Pcal_{2p-2}(X)$. Hence, combining the two upper bounds shows that $|d^p(x,Y_1)-d^p(x',Y_1)||d^p(x'',Y_1)-d^p(x''',Y_1)|$ is integrable.
\end{proof}

We will need the following continuity result.

\begin{lemma}\label{lem:Z-R-cts}
	Let $(X,d)$ be a metric space, $p\ge 1$, and $\mu\in \Pcal_{p-1}(X)$.
	Then, for each compact set $K\subseteq X$, the function $Z_i:K\times K\to \R$ is almost surely Lipschitz continuous for the metric $D((x,x'),(x'',x''')) = d(x,x'')+d(x',x''')$.
	Moreover, for any $x_0\in K$, its Lipschitz constant $M_i$ satisfies
	\begin{equation*} M_i\le pc_{p-1}\left(2d^{p-1}(x_0,Y_i)+2\Ebb[d^{p-1}(x_0,Y_i)]+4(\diam(K))^{p-1}\right).
	\end{equation*}
	If also $\mu\in \Pcal_{2p-2}(X)$, then $\E[M_i^2] < \infty$, and the function $R_{\mu}:K^4\to\R$ is Lipschitz with respect to the metric $D((x_1,x_1',x_1'',x_1'''), (x_2,x_2',x_2'',x_2''')) = d(x_1,x_2) + d(x_1',x_2') + d(x_1'',x_2'') + d(x_1''',x_2''') $, with Lipschitz constant $M_\mu$ bounded by
	\begin{equation*}
		M_\mu \leq 24p^2 c_{p-1}^2\diam(K)(\Ebb_\mu[d^{2p-2}(x_0,Y_1)] + 2(\diam(K))^{2p-2}).
	\end{equation*}
\end{lemma}

\begin{proof}[Proof of Lemma~\ref{lem:Z-R-cts}]
	For any $x,x',x'',x'''\in K$, we use \eqref{eqn:ineq-2} to get
	\begin{align*}
		|Z_i(x,x')-Z_i(x'',x''')| &\le pd(x,x'')\left(d^{p-1}(x,Y_i)+d^{p-1}(x'',Y_i)\right)\\
		&+ pd(x',x''')\left(d^{p-1}(x',Y_i)+d^{p-1}(x''',Y_i)\right) \\
		&+ pd(x,x'')\left(\E[d^{p-1}(x,Y_i)]+\E[d^{p-1}(x'',Y_i)]\right) \\
		&+ pd(x',x''')\left(\E[d^{p-1}(x',Y_i)]+\E[d^{p-1}(x''',Y_i)]\right).
	\end{align*}
	Now fix any $x_0\in K$, and use \eqref{eqn:ineq-1} to further this bound as
	\begin{align*}
		&|Z_i(x,x')-Z_i(x'',x''')| \\
		&\leq D((x,x''),(x',x'''))p c_{p-1}\left(2d^{p-1}(x_0,Y_i)+2\Ebb[d^{p-1}(x_0,Y_i)]+4(\diam(K))^{p-1}\right).
	\end{align*}
	This proves that $Z_i$ is almost surely Lipschitz continuous with Lipschitz constant $M_i$ bounded by the desired value.
	To see that $\E[M_i^2] < \infty$ when $\mu\in \Pcal_{2p-2}(X)$, simply note
	\begin{equation*}
		M_i^2 \le 3p^2 c_{p-1}^2\left(4d^{2p-2}(x_0,Y_i)+4\Ebb[d^{p-1}(x_0,Y_i)]^2+16(\diam(K))^{2p-2}\right),
	\end{equation*}
	hence $\E[M_i^2] < \infty$. Next, for any $x_1,x_1',x_1'',x_1''',x_2,x_2',x_2'',x_2'''\in K$ one has
	\begin{align*}
		&|Z_1(x_1,x_1')Z_1(x_1'',x_1''') - Z_1(x_2,x_2')Z_1(x_2'',x_2''')|\\
		&\leq |Z_1(x_1,x_1')||Z_1(x_1'',x_1''') - Z_1(x_2'',x_2''')| + |Z_1(x_2'',x_2''')||Z_1(x_1,x_1') - Z_1(x_2,x_2')|\\
		&\leq M_1^2( d(x_1,x_1')   D((x_1'',x_1'''),(x_2'',x_2''')) + d(x_2'',x_2''')D((x_1,x_1'),(x_2,x_2')))\\
		&\leq D((x_1,x_1',x_1'',x_1'''), (x_2,x_2',x_2'',x_2''')) \diam(K) M_1^2,
	\end{align*}
	where we used the fact that $Z_1(x,x)=0$. As a result, since $\Ebb[M_1^2]<\infty$, this shows that $R_\mu$ is Lipschitz on $K^4$ with Lipschitz constant $M_\mu$ bounded by
	\begin{align*}
		M_\mu &\leq \diam(K) \Ebb_\mu[M_1^2]\\
		&\leq  \diam(K)\cdot 3p^2 c_{p-1}^2(4\Ebb_\mu[d^{2p-2}(x_0,Y)] + 4\Ebb_\mu[d^{p-1}(x_0,Y)]^2 + 16(\diam(K))^{2p-2})\\
		&\leq 24p^2c_{p-1}^2 \diam(K) (\Ebb_\mu[d^{2p-2}(x_0,Y)] + 2(\diam(K))^{2p-2}).
	\end{align*}
	In the last inequality, we used Cauchy-Schwarz to get $\Ebb_\mu[d^{p-1}(x_0,Y)]^2 \leq \Ebb_\mu[d^{2p-2}(x_0,Y)]$.
\end{proof}

In particular, the proof of Lemma~\ref{lem:Z-R-cts} directly implies Lemma~\ref{lemma:continuity-Z-R} from the main body of the paper.

\begin{proof}[Proof of Lemma \ref{lemma:uniform_convergence_covariance}]
	We first start by observing that since $(X,d)$ is separable, on an event $E$ of full probability, $\bar\mu_n$ converges to $\mu$ for the weak topology.
	Next, fix $x_0\in K$. By the law of large numbers, on an event $F$ of full probability one has $\frac{1}{n}\sum_{i=1}^{n}d^{p-1}(x_0,Y_i) \to \Ebb_{\mu}d^{p-1}(x_0,Y_1)$ and $\frac{1}{n}\sum_{i=1}^{n}d^{2p-2}(x_0,Y_i) \to \Ebb_{\mu}d^{2p-2}(x_0,Y)$.
	By Lemma \ref{lem:UI}, on $E\cap F$, for all $x,x'\in K$, one has $\frac{1}{n}\sum_{i=1}^{n}(d^p(x,Y_i)-d^p(x',Y_i))\to \Ebb_\mu(d^p(x,Y)-d^p(x',Y))$ and $\frac{1}{n}\sum_{i=1}^{n}(d^p(x,Y_i)-d^p(x',Y_i))^2)\to \Ebb_\mu[(d^p(x,Y)-d^p(x',Y))^2]$. In particular, on $E\cap F$, for any $x,x'\in K$, we have
	\begin{equation}\label{eq:useful_2}
		R_{\bar\mu_n}(x,x')\to R_\mu(x,x').
	\end{equation}
	In the rest of the proof, we suppose that $E\cap F$ is met. Because $K^2$ is compact, let $(x_n,x_n')\in K^2$ such that $\|R_{\bar\mu_n}-R_\mu\|_\infty = |R_{\bar\mu_n}(x_n,x_n') - R_\mu(x_n,x_n')|$. To show that $\|R_{\bar\mu_n}-R_\mu\|_\infty\to 0$, it suffices to show that for any sequence $\{n_k\}_{k\in\Nbb}$, there is a subsequence $\{k_j\}_{j\in\Nbb}$ such that $\|R_{\bar\mu_{n_{k_j}}}-R_\mu\|_\infty\to 0$.
	Fix the sequence $\{n_k\}_{k\in \Nbb}$.
	By compactness of $K$, there exists a subsequence $\{k_j\}_{j\in \Nbb}$ and $x,x'\in K$ such that $x_{n_{k_j}}\to x$ and $x_{n_{k_j}}'\to x'$. Next, we use the estimates from Lemma~\ref{lem:Z-R-cts}
	showing that $R_{\bar\mu_n}$ is Lipschitz to obtain
	\begin{align*}
		&|R_{\bar\mu_n}(x_n,x_n')-R_{\bar\mu_n}(x,x')| \leq 2D((x_n,x_n'),(x,x')) \diam(K)\\
		&\quad\quad  \cdot 24p^2 c_{p-1}^2 \left(\frac{1}{n}\sum_{i=1}^n d^{2p-2}(x_0,Y_i) + 2(\diam(K))^{2p-2}\right).
	\end{align*}
	Since $(x_{n_{k_j}},x_{n_{k_j}}')\to (x,x')$, the above equation with the convergence hypothesis from $F$ show that $|R_{\bar\mu_{n_{k_j}}}(x_{n_{k_j}},x_{n_{k_j}}')-R_{\bar\mu_{n_{k_j}}}(x,x')|\to 0$. Now note that
	\begin{equation*}
		\begin{split}
			\|R_{\bar\mu_{n_{k_j}}} - R_\mu\|_\infty \leq |R_{\bar\mu_{n_{k_j}}}(x_{n_{k_j}},x_{n_{k_j}}')-R_{\bar\mu_{n_{k_j}}}(x,x')| + |R_{\bar\mu_{n_{k_j}}}(x,x')-R_\mu(x,x')|.
		\end{split}
	\end{equation*}
	Using \eqref{eq:useful_2} implies $\|R_{\bar\mu_{n_{k_j}}} - R_\mu\|_\infty\to 0$ on $E\cap F$. Thus $\|R_{\bar\mu_n}-R_\mu\|_\infty\to 0$ on $E\cap F$.
\end{proof}

\begin{proof}[Proof of Proposition~\ref{prop:functional_CLT}]
	First, we recall the main result of \cite[Theorem 1]{jain1975central} which provides a sufficient condition for an IID sequence of $C(S)$-valued random variables $X_1,X_2,\ldots$ to satisfy the central limit theorem, where $(S,\rho)$ is a Dudley space.
	Precisely, the result states that if $\Ebb[f(X_1)]=0$ for any $f\in (C(S))^*$ (where $(C(S))^{\ast}$ denotes the space of continuous linear functionals on $C(S)$), if $\sup_{t\in S}\Ebb[X_1^2(t)]<\infty$, and if there exists a random variables $M_1$ with $\Ebb[M_1^2]<\infty$ and
	\begin{equation*}
		|X_1(s)-X_1(t)|\leq M_1 \rho(s,t) \quad \textrm{a.s.}, 
	\end{equation*}
	then the sequence of random variables $\{n^{-1/2}(X_1+\ldots +X_n)\}_{n\in\Nbb}$ convergences in distribution, with respect to the topology of uniform convergence, to the Gaussian measure on $S$ whose covariance kernel is given by $L(s,t) := \Cov(X_1(s),X_1(t))$.
	In this proof for any fixed compact $K\subseteq X$, we check that these conditions hold with the space $K\times K$, which is compact with respect to the metric given by $D((x,x'),(x'',x'''))=d(x,x'')+d(x',x''')$ for any $x,x',x'',x'''\in K$.
	By the hypothesis that $(X,d)$ is HBD, it follows that $(K,d)$ is a Dudley space.
	We then check that $(K\times K,D)$ a Dudley space; this is immediate from the observation that for any $\varepsilon>0$, $\CovNum_{K\times K}(\varepsilon) \leq (\CovNum_K(\varepsilon))^2.$	Now, Lemma~\ref{lem:Z-R-cts} 
	exactly guarantees that $Z_1$ is Lipschitz continuous with Lipschitz constant $M_1$ satisfying $\E_{\mu}[M_1^2] < \infty$.
	Also, because $Z_1$ is Lipschitz, $\|{Z_1}|_{K\times K}\|_{\infty} \leq Z_1(x_0,x_0) + 2M_1\diam(K) = 2M_1\diam(K)$. In particular, we have $\Ebb_{\mu}[\|{Z_1}|_{K\times K}\|_\infty^2]<\infty$ which implies that $\sup_{x,x'\in K} \Ebb_{\mu}[Z_1^2(x,x')]<\infty$. Additionally, we have $\Ebb_{\mu}[\|{Z_1}|_{K\times K}\|_\infty]<\infty$.
	Thus, the Riesz representation theorem together with Fubini's theorem imply that for any positive continuous linear functional $f\in (C(K^2))^*$, we have $\Ebb_{\mu}[f(Z_1)]=0$, and as a result, for any continuous linear functional $f\in (C(K^2))^*$, we have $\Ebb_{\mu}[f(Z_1)]=0$.
	We have now checked the conditions from the main result \cite[Theorem 1]{jain1975central} which implies that the sequence $Z_1,Z_2,\ldots$ satisfies the central limit theorem; this implies that on the compact $K^2$, the sequence $\{n^{-1/2}G_n\}_{n\in\Nbb}$ converges in distribution to $\Gcal_\mu$. 
	
	We checked the convergence on compacts of $X\times X$ of the form $K\times K$.
	For a generic compact $\tilde K\subseteq X\times X$, let $K = \bigcup_{(x,x')\in\tilde K}\{x,x'\} \subseteq X$. Because $\tilde K$ is compact hence bounded, one can check that $K$ is also bounded and closed.
	Thus, because $X$ is Heine-Borel, $K$ is compact and $\tilde K\subseteq K\times K$.
	Therefore, the convergence in distribution of $\{n^{-1/2}G_n\}_{n\in\Nbb}$ to $\Gcal_\mu$ on $K\times K$ implies the convergence on $\tilde K$.
	The proof of the proposition is now complete.
\end{proof}

\begin{proof}[Proof of Proposition~\ref{prop:functional_LIL}]
	For a Banach space $(B,\|\cdot\|)$, and an IID sequence $X_1,X_2,\ldots$ of $B$-valued random variables, we say that (the law of) $X_1$ satisfies the LIL if $\{(2n\log\log n)^{-1/2}(X_1+\ldots+X_n)\}_{n\in\Nbb}$ is a relatively compact sequence in $B$ and if its closure is the unit ball from the corresponding RKHS.
	\cite[Corollary 1.3]{ledoux1988characterization} shows that the LIL holds for a an IID sequence of $B$-valued random variables $X_1,X_2,\ldots$ whenever $X_1$ satisfies the central limit theorem (CLT), in that $\{n^{-1/2}(X_1+\ldots+X_n)\}_{n\in\Nbb}$ converges in distribution to some $B$-valued Gaussian random variable, and whenever we additionally have $\Ebb[\|X_1\|^2/\log\log \|X_1\|]<\infty$.         
	In our case, we consider the random variable $Z_1|_{K\times K}$ on the set $C(K\times K)$, which is a Banach space when equipped with the topology of uniform convergence.
	In Proposition~\ref{prop:functional_CLT}, we showed that on any compact $K\subseteq X$, the random variable $Z_1|_{K\times K}$ satisfies the CLT and that $\Ebb[\|Z_1|_{K\times K}\|_\infty^2]<\infty$.
	As a result, the conditions from \cite[Corollary 1.3]{ledoux1988characterization} are met, which implies that the LIL holds for our purposes.
	Precisely, this shows that $\{(2n\log\log n)^{-1/2} G_{K,n}\}_{n\in \Nbb}$ is relatively compact and has $\Bcal_{K,\mu}$ as its closure.
\end{proof}

\begin{proof}[Proof of Lemma~\ref{lemma:formula_cp(mu)}]
	Let $f\in \Bcal_{K,\mu}$ and $x,x'\in F_p(\mu)$. Then, by the reproducing property of the RKHS $\Hcal_{K,\mu}$,
	\begin{align*}
		f(x,x') = \langle f, R_{K,\mu}(x,x',\cdot,\cdot)\rangle_{K,\mu} &\leq \|f\|_{K,\mu}\|R_{K,\mu}(x,x',\cdot,\cdot)\|_{K,\mu}\\
		&\leq \sqrt{R_{K,\mu}(x,x',x,x')}\\
		&= \sqrt{\Var_\mu(d^p(x,Y_1) - d^p(x',Y_1))} \leq \frac{\sigma_p(\mu)}{\sqrt 2},
	\end{align*}
	where in the first inequality, we used the Cauchy-Schwarz inequality. Thus,
	\begin{equation*}
		\sqrt 2\cdot \sup_{f\in \Bcal_{K,\mu}} \sup_{x,x'\in F_p(\mu)} f(x,x') \leq \sigma_p(\mu).
	\end{equation*}
	Next, for any $x,x'\in F_p(\mu)$, let $f_{x,x'} = \frac{R_{K,\mu}(x,x',\cdot,\cdot)}{\|R_{K,\mu}(x,x',\cdot,\cdot)\|_{K,\mu}}$. By construction, $f_{x,x'}\in \Bcal_{K,\mu}$ and
	\begin{equation*}
		f_{x,x'}(x,x') = \langle f_{x,x'}, R_{K,\mu}(x,x',\cdot,\cdot)\rangle_{K,\mu} = \|R_{K,\mu}(x,x',\cdot,\cdot)\|_{K,\mu}
		= \sqrt{\Var_\mu(d^p(x,Y_1) - d^p(x',Y_1))}.
	\end{equation*}
	Taking the supremum over $x,x'\in F_p(\mu)$ gives the other inequality
	\begin{equation*}
		\sqrt 2\cdot \sup_{f\in \Bcal_{K,\mu}} \sup_{x,x'\in F_p(\mu)} f(x,x') \geq \sigma_p(\mu),
	\end{equation*}
	which ends the proof.
\end{proof}

We will also need the following result, which is certainly standard.

\begin{lemma}\label{lem:RKHS-ball-cpt}
	Let $(X,d)$ be a metric space, $p\ge 1$, and $\mu\in \Pcal_{2p-2}(X)$.
	For any compact $K\subseteq X$, the set $\Bcal_{K,\mu} \subseteq C(K\times K)$ is compact in the topology of uniform convergence on $K$.
\end{lemma}

\begin{proof}
	The result follows from the Arzela-Ascoli theorem if we show that $\Bcal_{K,\mu}$ is uniformly bounded and uniformly equicontinuous.
	To see uniform boundedness, take arbitrary $f\in \Bcal_{K,\mu}$ and $x,x'\in K$, and use the reproducing property and Cauchy-Schwarz, to get
	\begin{align*}
		|f(x,x')| &= |\langle f, R_{K,\mu}(x,x',\cdot,\cdot)\rangle_{K,\mu}| \\
		&\le \| f\|_{K,\mu} \|R_{K,\mu}(x,x',\cdot,\cdot)\|_{K,\mu} \\
		&\le \|R_{K,\mu}(x,x',\cdot,\cdot)\|_{K,\mu} \\
		&= \sqrt{R_{K,\mu}(x,x')}
	\end{align*}
	Now write $M$ for the Lipschitz constant of $R_{K,\mu}$ on $K^4$, which exists by Lemma~\ref{lem:Z-R-cts}
	and fix any $x_0,x_0'\in K$.
	it follows that
	\begin{align*}
		f(x,x')^2 &\le R_{K,\mu}(x,x') \\
		&\le R_{K,\mu}(x_0,x_0')+M\,D((x,x'),(x_0,x_0')) \\
		&\le R_{K,\mu}(x_0,x_0')+2M\diam(K).
	\end{align*}
	Since the right side does not depend on $f\in B_{K,\mu}$ or $x,x'\in K$, we have shown uniform boundedness.
	For uniform equicontinuity, we use a similar approach, so take arbitrary $f\in \Bcal_{K,\mu}$ and $x,x',x'',x'''\in K$, and use the reproducing property and Cauchy-Schwarz, to get
	\begin{align*}
		|f(x,x')-f(x'',x''')| &= |\langle f,R_{K,\mu}(x,x',\cdot,\cdot)-R_{K,\mu}(x'',x''',\cdot,\cdot)\rangle_{K,\mu}| \\
		&\le \|f\|_{K,\mu}\|R_{K,\mu}(x,x',\cdot,\cdot)-R_{K,\mu}(x'',x''',\cdot,\cdot)\|_{K,\mu} \\
		&\le \|R_{K,\mu}(x,x',\cdot,\cdot)-R_{K,\mu}(x'',x''',\cdot,\cdot)\|_{K,\mu}  \\
		&= \sqrt{R_{K,\mu}(x,x')+R_{K,\mu}(x'',x''')-2R_{K,\mu}(x,x',x'',x''')}\\
		&\leq \sqrt{2MD((x,x'),(x'',x'''))}.
	\end{align*}
	This ends the proof that $\Bcal_{K,\mu}$ is uniformly equicontinuous and completes the proof of the lemma.
\end{proof}

\begin{proof}[Proof of Lemma~\ref{lemma:sigma_continuity}]
	By Lemma~\ref{lem:RKHS-ball-cpt}, 
	the set $\Bcal_{K_1,\mu}$ is compact in the topology of uniform convergence.
	In particular, there exists a sequence $\delta_i\to 0$ and $(f_i,x_i,x_i')\in \Bcal_{K_1,\mu}\times F_p(\mu)\times K_{\delta_i}$ as well as $(f,x,x')\in B_{K_1,\mu}\times F_p(\mu)\times K_1$ such that $f_i\to f$, $x_i\to x$, $x_i'\to x'$ and such that $\sqrt 2\cdot f_i(x_i,x_i')$ converges to the limsup of the left-hand side of the above equation as $\delta\to 0$. By continuity of $f$, we have $f_i(x_i,x_i')\to f(x,x')$. Since $\delta_i\to 0$, we also have $x'\in F_p(\mu).$ Therefore, using Lemma~\ref{lemma:formula_cp(mu)} 
	gives $\sqrt 2\cdot f_i(x_i,x_i')\to \sqrt 2 \cdot f(x,x')\leq\sigma_p(\mu)$. The other inequality is a direct immediate from $K_\delta\subseteq F_p(\mu)$ and Lemma~\ref{lemma:formula_cp(mu)}.
\end{proof}

\section{Proofs for the strong law of large numbers (Subsection~\ref{subsec:SLLN})}
\label{sec:SLLN}

A key tool in the proofs of our main results is a suitably powerful form of a strong law of large numbers (SLLN) for relaxed Fr\'echet means in the one-sided Hausdorff distance.
While this problem has already been studied in the recent works \cite{SchoetzSLLN,EvansJaffeSLLN}, we provide a strengthening of these results which we believe will be of independent interest.
In particular, this work resolves a conjecture of Evans-Jaffe in the affirmative (see \cite[Remark~3.11]{EvansJaffeSLLN}).

Let us briefly comment on the existing SLLNs.
The first is \cite{SchoetzSLLN} in which the author proved the SLLN for a $(p-1)$th moment assumption and deterministic relaxation rates.
The second is \cite{EvansJaffeSLLN} in which the authors proved the SLLN for a $p$th moment assumption and possibly random relaxation rates.
In the course of this work, it became necessary to use the SLLN for a $(p-1)$th moment assumption and possibly random relaxation rates; the main result of this section is that the SLLN indeed holds in this strengthened setting.

To begin, we prove a single result which contains most of the hard work of this section.
Indeed, it will immediately imply the remaining results we wish to prove.

\begin{lemma}\label{lem:Fp-cpt-opt}
	Let $(X,d)$ be a HB space and $p\ge 1$.
	Suppose $\{(\mu_n,C_n,\varepsilon_n)\}_{n\in\Nbb}$ is a sequence in $\mathcal{P}_{p-1}(X)\times \closed(X)\times[0,\infty)$ with $\mu_n\to \mu$ in $\tau^{p-1}_w$, $\kurlimit_{n\to\infty}C_n = C$, and $\varepsilon_n\to \varepsilon$ for some $(\mu,C,\varepsilon)\in\mathcal{P}_{p-1}(X)\times \closed(X)\times[0,\infty)$.
	Then, for any sequence $\{x_n\}_{n\in\Nbb}$ with $x_n\in F_p(\mu_n,C_n,\varepsilon_n)$, there exists a subsequence $\{n_j\}_{j\in\Nbb}$ and a point $x\in F_p(\mu,C,\varepsilon)$ with $x_{n_j}\to x$.
\end{lemma}

\begin{proof}
	Our approach is to fix $o\in C$ and then to derive some lower bounds on the Fr\'echet functionals $W_p$ whenever the argument ranges of a suitable large ball.
	To set things up, use  $C\subseteq \kurinner_{n\in\Nbb}C_n$ to get that there exists a subsequence $\{n_k\}_{k\in\Nbb}$ and a sequence $\{o_k\}_{k\in\Nbb}$ with $o_k\in C_{n_k}$ such that $o_k\to o$.
	
	Next, we derive two probabilistic inequalities that will be used to break up the integral for $W_p$ into different regimes.
	For the first inequality, note that $\mu_{n_k}\to\mu$ in $\tau_w^{p-1}$ implies \cite[Definition~6.8]{Villani} that we have
	\begin{equation*}
		\lim_{M\to\infty}\limsup_{k\to\infty}\int_{X\setminus B_{M}(o)}d^{p-1}(o,y)d\mu_{n_k}(y)\to 0.
	\end{equation*}
	Now observe that, for each $M>0$, we have $X\setminus B_{M}(o_k)\subseteq X\setminus B_{M/2}(o)$ for sufficiently large $k\in\Nbb$.
	Thus, we apply \eqref{eqn:ineq-1} to get
	\begin{align*}
		\limsup_{k\to\infty}&\int_{X\setminus B_{M}(o_k)}d^{p-1}(o_k,y)d\mu_{n_k}(y) \\
		&\le \limsup_{k\to\infty}\int_{X\setminus B_{M}(o_k)}c_{p-1}(d^{p-1}(o_k,o)+d^{p-1}(o,y))d\mu_{n_k}(y)\\
		&\le c_{p-1}\limsup_{k\to\infty}\left(d^{p-1}(o_k,o)+\int_{X\setminus B_{M}(o_k)}d^{p-1}(o,y)d\mu_{n_k}(y)\right)\\
		&= c_{p-1}\limsup_{k\to\infty}\int_{X\setminus B_{M}(o_k)}d^{p-1}(o,y)\,d\mu_{n_k}(y)\\
		&\le c_{p-1}\limsup_{k\to\infty}\int_{X\setminus B_{\frac{1}{2}M}(o)}d^{p-1}(o,y)\,d\mu_{n_k}(y).
	\end{align*}
	The two previous two displays together yield
	\begin{equation*}
		\lim_{M\to\infty}    \limsup_{k\to\infty}\int_{X\setminus B_{M}(o_k)}d^{p-1}(o_k,y)d\mu_{n_k}(y) = 0.
	\end{equation*}
	For the second inequality, note that $\mu_{n_k}\to \mu$ in $\tau_w^{p-1}$ implies that $\{\mu_{n_k}\}_{k\in\Nbb}$ is relatively compact in $\tau_w$, hence tight by Prokhorov.
	This means we have
	\begin{equation*}
		\lim_{M\to\infty}\sup_{k\in\Nbb}\mu_{n_k}(X\setminus B_{M}(o)) \to 0,
	\end{equation*}
	and the same argument as above shows that this implies
	\begin{equation*}
		\lim_{M\to\infty}\limsup_{k\to\infty}\mu_{n_k}(X\setminus B_{M}(o_k)) \to 0.
	\end{equation*}
	Combining these two inequalities, we see that it is possible to choose $M>0$ large enough so that for all sufficiently large $k\in\Nbb$ we have
	\begin{equation}\label{eqn:moment-bdd-4}
		\int_{X\setminus B_{M}(o_k)}d^{p-1}(o_k,y)d\mu_{n_k}(y) \le \frac{1}{p2^{2p+2}}
	\end{equation}
	and
	\begin{equation}\label{eqn:moment-bdd-3}
		\mu_{n_k}(X\setminus B_{M}(o_k)) \le \frac{1}{p2^{2p}} \le \frac{1}{2}
	\end{equation}
	We can of course also assume $M\ge 1/16$ and $M^p> \sup_{k\in\Nbb}\varepsilon_{n_k}$.
	
	Moving on, we derive two deterministic inequalities, each of which will be applied in one of two different regimes.
	First, use the elementary inequality \eqref{eqn:ineq-1} to get
	\begin{equation}\label{eqn:moment-bdd-2}
		\begin{split}
			d^p(x_{n_k},y)-d^p(o_{k},y) &\ge c_p^{-1}d^p(x_{n_k},o_k) - 2d^p(o_k,y) \\
			&\ge \frac{d^p(x_{n_k},o_k)}{2^{p-1}} - 2d^p(o_k,y) \\
		\end{split}
	\end{equation}
	for all $y\in X$ and $k\in\Nbb$.
	Second, use \eqref{eqn:ineq-2} then \eqref{eqn:ineq-1} to get
	\begin{equation*}
		\begin{split}
			&|d^p(x_{n_k},y)-d^p(o_k,y)| \\
			& \le pd(x_{n_k},o_k)(d^{p-1}(x_{n_k},y)+d^{p-1}(o_k,y)) \\
			& \le pd(x_{n_k},o_k)(c_{p-1}(d^{p-1}(x_{n_k},o_k)+d^{p-1}(o_k,y))+d^{p-1}(o_k,y)) \\
			&= pc_{p-1}d^p(x_{n_k},o_k)+p(1+c_{p-1})d(x_{n_k},o_k)d^{p-1}(o_k,y) \\
			&\leq p2^{p-1}d^p(x_{n_k},o_k)+p2^pd(x_{n_k},o_k)d^{p-1}(o_k,y)
		\end{split}
	\end{equation*}
	for all $y\in X$ and $k\in\Nbb$.
	In particular, this implies
	\begin{equation}\label{eqn:moment-bdd-1}
		d^p(x_{n_k},y)-d^p(o_k,y) \ge -p2^{p-1}d^p(x_{n_k},o_k)-p2^pd(x_{n_k},o_k)d^{p-1}(o_k,y)
	\end{equation}
	for all $y\in X$ and $k\in\Nbb$.
	
	Now we put these pieces together and assume that for the sake of contradiction that we have $\limsup_{k\to\infty}d(x_{n_k},o_k) \ge 17M$.
	In particular, we have $d(x_{n_k},o_k) \ge 16M$ for infinitely many $k\in\Nbb$.
	On the one hand, this means we can use \eqref{eqn:moment-bdd-2} and \eqref{eqn:moment-bdd-3} to get
	\begin{align*}
		\int_{B_{M}(o_k)}(d^p(x_{n_k},y)-d^p(o_k,y))\, d\mu_{n_k}(y) &\geq \left(\frac{d^p(x_{n_k},o_k)}{2^{p-1}} - 2M^p \right)\mu_{n_k}(B_{M}(o_k)) \\
		&\geq \left(\frac{d^p(x_{n_k},o_k)}{2^{p-1}} - 2M^p \right)\frac{1}{2} \\
		&=\frac{d^p(x_{n_k},o_k)}{2^{p}} - M^p
	\end{align*}
	for infinitely many $k\in\Nbb$.
	On the other hand, we can use \eqref{eqn:moment-bdd-1}, \eqref{eqn:moment-bdd-3} and \eqref{eqn:moment-bdd-4} to get
	\begin{align*}
		\int_{X\setminus B_{M}(o_k)}&(d^p(x_{n_k},y)-d^p(o_k,y))\, d\mu_n(y) \\
		&\ge -p 2^{p-1} d^p(x_{n_k},o_k)\mu_n(X\setminus B_{M}(o_k))  - p2^p d(x_{n_k},o_k) \int_{X\setminus B_{M}(o_k)}d^{p-1}(o_k,y)\,d\mu_{n_k}(y) \\
		&\geq  -\frac{d^p(x_{n_k},o_k)}{2^{p+1}} - \frac{d(x_{n_k},o_k)}{2^{p+2}}\\
		&\geq  -\frac{d^p(x_{n_k},o_k)}{2^{p+1}} - \frac{d^p(x_{n_k},o_k)}{2^{p+2}}
	\end{align*}
	for infinitely many $k\in\Nbb$, 
	where in the last line we used $M\ge 1/16$ to see that $d(x_{n_k},o_k) \ge 16M$ implies $d(x_{n_k},o_k) \le d^p(x_{n_k},o_k)$.
	Combining the previous two displays, we get the following:
	For infinitely many $k\in\Nbb$, we have
	\begin{align*}
		W_p(\mu_{n_k},x_{n_k},o_k) &= \int_{B_M(o_k)}(d^p(x_{n_k},y)-d^p(o_k,y))\, d\mu_{n_k}(y) \\
		&\qquad+ \int_{X\setminus B_{M}(o_k)}(d^p(x_{n_k},y)-d^p(o_k,y))\, d\mu_{n_k}(y) \\
		&\ge\frac{d^p(x_{n_k},o_k)}{2^{p}} - M^p   -\frac{d^p(x_{n_k},o_k)}{2^{p+1}} - \frac{d^p(x_{n_k},o_k)}{2^{p+2}} \\
		&=\frac{d^p(x_{n_k},o_k)}{2^{p+2}} - M^p \\
		&= \left(\frac{16^p}{2^{p+2}}-1\right)M^p \\
		&= (2^{3p-2}-1)M^p \\
		&\ge M^p.
	\end{align*}
	In summary, we have shown that
	\begin{equation*}
		\limsup_{k\to\infty}W_p(\mu_{n_k},x_{n_k},o_k) \ge M^p.
	\end{equation*}
	However, this contradicts our assumptions of $x_{n_k}\in F_p(\mu_{n_k},C_{n_k},\varepsilon_{n_k})$ and $M^p > \sup_{k\in\Nbb}\varepsilon_{n_k}$.
	Therefore, we must have $\limsup_{k\to\infty}d(x_{n_k},o_k)< 17M$.
	In particular, this implies that we have $d(x_{n_k},o) \le 18M$ for sufficiently large $k\in\Nbb$.
	
	In other words, we have shown that $x_{n_k}\subseteq B_{18M}(o)$ for sufficiently large $k\in\Nbb$.
	Since $(X,d)$ is a Heine-Borel space, this means there exists some $\{k_j\}_{j\in\Nbb}$ and some $x\in B_{18M}(o)$ with $x_{n_{k_j}}\to x$.
	So, it only remains to show that $x\in F_p(\mu,C,\varepsilon)$.
	To see this, first note that $\kurouter_{n\in\Nbb}C_n\subseteq C$ implies $x\in C$.
	Then, take any $z\in C$ and use $C\subseteq \kurinner_{n\in\Nbb}C_n$, to get some $\{z_j\}_{j\in\Nbb}$ with $z_j\in C_{n_{k_j}}$ such that $z_j\to z$.
	We compute, using the joint continuity of $W_p$ (see \cite[Lemma~3.7]{EvansJaffeSLLN}),
	\begin{align*}
		W_p(\mu,x,o) &= \lim_{j\to\infty}W_p(\mu_{n_{k_j}},x_{n_{k_j}},o_{k_j}) \\
		&\le \lim_{j\to\infty}(W_p(\mu_{n_{k_j}},z_{n_{k_j}},o_{k_j})+\varepsilon_{n_{k_j}}) = W_p(\mu,z,o) + \varepsilon,
	\end{align*}
	and this implies $x\in F_p(\mu,C,\varepsilon)$.
	This finishes the proof.
\end{proof}

\begin{proof}[Proof of Lemma~\ref{lemma:Fp-cpt}]
	First let us show non-emptiness.
	Fix $o\in C$, and get $\{x_n\}_{n\in\Nbb}$ in $C$ with
	\begin{equation*}
		W_p(\mu,x_n,o) \downarrow \inf_{x\in C}W_p(\mu,x,o)
	\end{equation*}
	as $n\to\infty$.
	Then set
	\begin{equation*}
		\varepsilon_n := W_p(\mu,x_n,o)-\inf_{x\in C}W_p(\mu,x,o)
	\end{equation*}
	and note by construction that we have $x_n\in F_p(\mu,C,\varepsilon_n)$.
	By Lemma~\ref{lem:Fp-cpt-opt}, there exists $\{n_j\}_{j\in\Nbb}$ and $x\in F_p(\mu,C,0)$ with $x_{n_j}\to x$.
	In particular, this shows that $F_p(\mu,C,0)$ is non-empty, hence that $F_p(\mu,C,\varepsilon) \supseteq F_p(\mu,C,0)$ is non-empty.
	To show compactness, simply take an arbitrary sequence $\{x_n\}_{n\in\Nbb}$ in $F_p(\mu,C,\varepsilon)$ and apply Lemma~\ref{lem:Fp-cpt-opt}.
\end{proof}

\begin{corollary}\label{cor:Fp-cts}
	Suppose $(X,d)$ is a HB space and $p\ge 1$.
	Also suppose that $\{(\mu_n,C_n,\varepsilon_n)\}_{n\in\Nbb}$ is a sequence in $\mathcal{P}_{p-1}(X)\times \closed(X)\times[0,\infty)$ with $\mu_n\to \mu$ in $\tau^{p-1}_w$, $Lt_{n\to\infty}C_n = C$, and $\varepsilon_n\to \varepsilon$ for some $(\mu,C,\varepsilon)\in\mathcal{P}_{p-1}(X)\times \closed(X)\times[0,\infty)$.
	Then, $\dvechaus(F_p(\mu_n,C_n,\varepsilon_n),F_p(\mu,C,\varepsilon))\to 0$.
\end{corollary}

\begin{proof}
	It suffices to show for all subsequences $\{n_k\}_{k\in\Nbb}$ that there exists a further subsequence $\{k_j\}_{j\in\Nbb}$ such that we have
	\begin{equation*}
		\dvechaus(F_p(\mu_{n_{k_j}},C_{n_{k_j}},\varepsilon_{n_{k_j}}),F_p(\mu,C,\varepsilon))\to 0.
	\end{equation*}
	To show this, use Lemma~\ref{lemma:Fp-cpt} 
	to get for each $k\in\Nbb$ a point $x_k\in F_p(\mu_{n_k},C_{n_k},\varepsilon_{n_k})$ satisfying
	\begin{equation*}
		d(x_k,F_p(\mu,C,\varepsilon)) = \dvechaus(F_p(\mu_{n_{k}},C_{n_{k}},\varepsilon_{n_{k}}),F_p(\mu,C,\varepsilon)).
	\end{equation*}
	Then use Lemma~\ref{lem:Fp-cpt-opt} to get a sequence $\{k_j\}_{j\in\Nbb}$ and a point $x\in F_p(\mu,C,\varepsilon)$ with $x_{k_j}\to x$.
	We of course have
	\begin{align*}
		\dvechaus(&F_p(\mu_{n_{k_j}},C_{n_{k_j}},\varepsilon_{n_{k_j}}),F_p(\mu,C,\varepsilon)) \\
		&= d(x_{k_j},F_p(\mu,C,\varepsilon)) \\
		&\le d(x_{k_j},x) \to 0,
	\end{align*}
	so the result is proved.
\end{proof}

Corollary~\ref{cor:Fp-cts} directly implies the following result for one-sided SLNN for relaxed constrained Fr\'echet means, strengthening the one-sided SLLN results from \cite{EvansJaffeSLLN,SchoetzSLLN}.

\begin{theorem}\label{thm:SLLN}
	Let $(X,d)$ be a HB space, $p\ge 1$, and $\mu\in\Pcal_{p-1}(X)$.
	Let $C$ be any closed set and $\{C_n\}_{n\in\Nbb}$ be any random closed sets. Fix $\varepsilon\geq 0$ and any random relaxation scales $\{\varepsilon_n\}_{n\in\Nbb}$. Then, $\dvechaus(F_p(\bar \mu_n,C_n, \varepsilon_n),F_p(\mu,C,\varepsilon))\to 0$ a.s.
	on $\{\varepsilon_n\to \varepsilon\}\cap\left\{\textrm{Lt}_{n\in\Nbb}C_n = C\right\}$.
\end{theorem}

Theorem~\ref{thm:mean-SLLN}
is a direct consequence of Theorem~\ref{thm:SLLN}. As another consequence, we were also able to deduce a strong consistency result for medoids.

\begin{proof}[Proof of Theorem~\ref{thm:medoid-SLLN}]
	Immediate from Theorem~\ref{thm:SLLN} and the fact that $\kurlimit_{n\in\Nbb}\supp(\bar \mu_n) = \supp(\mu)$ a.s. (see \cite[Proposition~4.2]{EvansJaffeSLLN}).
\end{proof}

For another consequence, we remark that these methods can be used to show a large deviation upper bound for the empirical Fr\'echet mean sets, in the one-sided Hausdorff distance $\dvechaus$, under a much weaker moment assumption than originally expected.
Indeed, one can combine Corollary~\ref{cor:Fp-cts} with the methods of \cite[Subsection~4.2]{JaffeClustering} (see also \cite[Theorem~4.6]{EvansJaffeSLLN}) to show that $\int_{X}\exp(\alpha d(x,y))\, d\mu(y)$ for some $x\in X$ and all $\alpha > 0$ suffices for a large deviations upper bound.
We leave the details to the interested reader.

\section{Proofs omitted from the main general results (Subsection~\ref{subsec:main_proofs})}
\label{sec:proof_general_results}

\begin{proof}[Proof of Lemma~\ref{lemma:finite_sigma}]
For any $x,x'\in X$, we have $\Var_\mu(d^p(x,Y_1)-d^p(x',Y_1)) = R_\mu(x,x',x,x')$.
By Lemma~\ref{lemma:Fp-cpt}, the set $F_p(\mu)$ is compact, and, by Lemma~\ref{lem:Z-R-cts}, the function $R_{\mu}$ is continuous.
Therefore, $R_{F_p(\mu),\mu} = R_{\mu}|_{F_p(\mu)\times F_p(\mu)}$ has a finite maximum value.
\end{proof}

\begin{proof}[Proof of Lemma~\ref{lem:sigma_pos}]
Assume that $F_p(\mu)$ is not a singleton, then take distinct $x,x'\in F_p(\mu)$ and write $r := d(x,x') > 0$.
Now use $x\in \supp(\mu)$ to get $\mu(B_{r/3}(x)) > 0$, and observe that $d(x,Y_1) \leq r/3 <2r/3 \le d(x',Y_1) $ when $Y_1\in B_{r/3}(x)$. Hence, with nonzero probability, $d(x,Y_1)\neq d(x,Y_1)$. Therefore, $\sigma_p(\mu) > 0$.
\end{proof}

We are now ready to give the missing proofs of our main results.

\begin{proof}[Proof of Theorem~\ref{thm:gaussian_tail_one_sided}]
Fix $\delta>0$ and denote $K^\delta:=(F_p(\mu))^\delta$ the $\delta$-thickening of $F_p(\mu)$. Since $X$ is HB and $F_p(\mu)$ is compact by Lemma~\ref{lemma:Fp-cpt}, it follows that $K^\delta$ is compact as well.
By Proposition~\ref{prop:functional_CLT}, $\{n^{-1/2}G_{K^\delta,n}\}_{n\in\Nbb}$ converges to a Gaussian process $G_{K^\delta}\sim \Gcal_\mu$ on $K^\delta\times K^\delta$ in distribution.
Because $K^\delta\times K^\delta$ is compact and the Gaussian process takes values in $C(K^\delta\times K^\delta)$, $G_{K^\delta}$ is a.s. bounded.
Also, define $\sigma_p^2(\mu;\delta) := 2\cdot  \sup_{x,x'\in K^\delta} \Ebb[(G_{K^\delta}(x,x'))^2] = 2\cdot  \sup_{x,x'\in K^\delta} R_\mu(x,x')$, which is finite.
As a result, we can apply the Borell-TIS inequality \cite[Theorem 2.1.1]{adler2007random} which shows that $M_p(\mu;\delta):=\Ebb[\|G_{K^\delta}\|_\infty]<\infty$, and that we have
\begin{equation*}
	\Pbb(\|G_{K^\delta}\|_\infty \geq c ) \leq \exp\left(-\frac{(c-M_p(\mu;\delta))^2}{\sigma_p^2(\mu;\delta)}\right).
\end{equation*}
Therefore, by convergence in distribution, for any $\eta>0$,
\begin{align*}
	\limsup_{n\to\infty} \Pbb_{\mu}(\|n^{-1/2}G_{K^\delta,n}\|_\infty \geq c-\eta ) \leq \exp\left(-\frac{(c-\eta-M_p(\mu;\delta))^2}{\sigma_p^2(\mu;\delta)}\right).
\end{align*}
Now use Theorem~\ref{thm:mean-SLLN} 
to get $\dvechaus(F_p(\bar\mu_n,\varepsilon_n),F_p(\mu))\to 0$ almost surely. In particular, we have convergence in probability, which implies $         \Pbb_{\mu}(F_p(\bar\mu_n,\varepsilon_n)\subseteq K^\delta) \underset{n\to\infty}{\longrightarrow} 1.$ Observe that
\begin{align*}
	&\{F_p(\bar\mu_n,\varepsilon_n)\subseteq K^\delta\} \cap  \{\|n^{-1/2}G_{K^\delta,n}\|_\infty < c-\eta\} \\
	&\subseteq \{\forall x\in F_p(\mu),\forall x'\in F_p(\bar\mu_n,\varepsilon_n), G_{K^\delta,n}(x,x') \leq (c-\eta)\sqrt n\}\\
	&\subseteq \{\forall x\in F_p(\mu),\forall x'\in F_p(\bar\mu_n,\varepsilon_n), W_p(\bar\mu_n,x,x') \leq (c-\eta)n^{-1/2}\}\\
	&\subseteq \{F_p(\mu)\subseteq F_p(\bar\mu_n,\varepsilon_n)\} \cup \{\varepsilon_n\sqrt n \leq c-\eta\},
\end{align*}
where in the second inclusion we use the fact that for any $x\in F_p(\mu)$ and $x'\in X$, $W_p(\mu, x, x') \leq 0$.
By hypothesis,  we have $\liminf_{n\to\infty}\varepsilon\sqrt n \geq c$ a.s. which implies $\Pbb(\varepsilon_n\sqrt n \leq c-\eta)\to 0$ as $n\to\infty$.
As a result, applying the union bound yields
\begin{align*}
	\limsup_{n\to\infty}\Pbb_{\mu}(F_p(\mu)\not\subseteq F_p(\bar\mu_n,\varepsilon_n)) &\leq \limsup_{n\to\infty} \left[\Pbb_{\mu}(\|n^{-1/2}G_{K^\delta,n}\|_\infty \geq c-\eta ) \right.\\
	&\left.+ (1-\Pbb_{\mu}(F_p(\bar\mu_n,\varepsilon_n)\subseteq K^\delta)) + \Pbb_{\mu}(\varepsilon_n\sqrt n \leq c-\eta) \right]\\
	&\leq \exp\left(-\frac{(c-\eta-M_p(\mu;\delta))^2}{\sigma_p^2(\mu;\delta)}\right).
\end{align*}
This holds for all $\eta>0$, hence
\begin{equation*}
	\limsup_{n\to\infty}\Pbb_{\mu}(F_p(\mu)\not\subseteq F_p(\bar\mu_n,\varepsilon_n)) \leq \exp\left(-\frac{(c-M_p(\mu;\delta))^2}{\sigma_p^2(\mu;\delta)}\right).
\end{equation*}
Further, this holds for any $\delta>0$. Now by a.s. continuity of $G\sim\Gcal_\mu$ (see \cite[Theorem~1.3.5]{adler2007random} and the remarks thereafter) on, say, $K^1\times K^1$, and the dominated convergence theorem, we have that $M_p(\mu;\delta)=\Ebb[\|G|_{K^\delta\times K^\delta}\|_\infty] \to \Ebb[\|G|_{F_p(\mu)\times F_p(\mu)}\|_\infty]:=M_p(\mu)$ as $\delta\to 0$. Similarly, by continuity of $R_\mu$, $\sigma_p^2(\mu;\delta) \to \sigma^2(\mu;0) =\sigma^2_p(\mu)$ as $\delta\to 0$, where
\begin{equation*}
	\sigma_p(\mu) = \sqrt 2 \cdot \sup_{x,x'\in F_p(\mu)} \sqrt{ R_\mu(x,x')} 
	= \sqrt 2 \cdot \sup_{x,x'\in F_p(\mu)} \sqrt{\Var_\mu(d^p(x,Y_1)-d^p(x',Y_1))}.
\end{equation*}
As a result, we showed that
\begin{equation*}
	\limsup_{n\to\infty}\Pbb_{\mu}( F_p(\mu)\not\subseteq F_p(\bar\mu_n,\varepsilon_n)) \leq \exp\left(-\frac{(c-M_p(\mu))^2}{\sigma_p^2(\mu)}\right),
\end{equation*}
which ends the proof of the first claim of the theorem.
In particular, this shows that
\begin{equation*}
	\limsup_{n\to\infty}\Pbb_{\mu}(\dvechaus(F_p(\mu), F_p(\bar\mu_n,\varepsilon_n))>0) \leq \exp\left(-\frac{(c-M_p(\mu))^2}{\sigma_p^2(\mu)}\right).
\end{equation*}
Now, by Theorem~\ref{thm:mean-SLLN} 
because almost surely, $\dvechaus(F_p(\bar\mu_n,\varepsilon_n),F_p(\mu))\to 0$, we have
\begin{equation*}
	\Pbb_{\mu}(\dvechaus( F_p(\bar\mu_n,\varepsilon_n),F_p(\mu))>\delta ) \underset{n\to\infty}{\longrightarrow} 0.
\end{equation*}
Summing the two above equations yields the second claim of the theorem.

Last, we give an upper bound on the value $M_p(\mu)$ based on covering numbers.
Recall that $G_{F_p(\mu)}$ is a Gaussian process on $F_p(\mu)\times F_p(\mu)$ on which we can define the pseudo-metric $d_{G}$ such that for any $x,x',x'',x'''\in F_p(\mu)$,
\begin{equation*}
	d_{G}((x,x'),(x'',x''')) = \sqrt{\Ebb[(G_K(x,x')-G_K(x'',x'''))^2]}
	= \sqrt{\Var_{\mu}(Z_1(x,x')-Z_1(x'',x'''))},
\end{equation*}
where using the proof of Lemma~\ref{lem:Z-R-cts},
\begin{align*}
	&|Z_1(x,x')-Z_1(x'',x''')| 
	\leq pd(x,x'')\left(d^{p-1}(x,Y_i)+d^{p-1}(x'',Y_i)\right)\\
	&+ pd(x',x''')\left(d^{p-1}(x',Y_i)+d^{p-1}(x''',Y_i)\right) 
	+ 2p(d(x,x'') + d(x',x''')) \left(\sup_{x\in F_p(\mu)}W_{p-1}(\mu,x)\right)\\
	&\leq pd_{max}\left(d^{p-1}(x,Y_i)+d^{p-1}(x'',Y_i)+ d^{p-1}(x',Y_i)  
	+d^{p-1}(x''',Y_i) + 4\cdot \sup_{x\in F_p(\mu)}W_{p-1}(\mu,x)\right),
\end{align*}
where $d_{max}=\max(d(x,x''),d(x',x'''))$. The first supremum term is finite because $x\mapsto W_{p-1}(\mu,x)$ is continuous on the compact set $F_p(\mu)$, as long as $\mu\in \mathcal{P}_{p-1}(X)$, and that the same argument holds for the supremum of the $x\mapsto W_{2p-2}(\mu,x)$.
Therefore, with $W_k = \sup_{x\in F_p(\mu)} W_k(\mu,x)$, we obtain
\begin{equation*}
	\Var(Z_1(x,x')-Z_1(x'',x''')) \leq 32 p^2 d_{max}^2 (W_{2p-2} + W_{p-1}^2) \leq 64 p^2 d_{max}^2 W_{2p-2},
\end{equation*}
where in the last inequality we used the Cauchy-Schwarz inequality.
Hence,
\begin{equation*}
	d_{G}((x,x'),(x'',x''')) \leq 8 p \sqrt{W_{2p-2}}\max(d(x,x''), d(x',x''')) .
\end{equation*}
Therefore, Dudley's inequality \cite[Theorem~1.3.3]{adler2007random} gives us the bound
\begin{align*}
	M_p(\mu)&\leq 24 \int_0^\infty \sqrt{\log \CovNum(u; (F_p(\mu))^2,d_{G})} du\\
	&\leq 24 \int_0^\infty \sqrt{\log ( \CovNum(u/(8 p \sqrt{W_{2p-2}}); F_p(\mu),d)^2)} du\\
	&= 192  p \sqrt{2\cdot \sup_{x\in F_p(\mu)} W_{2p-2}(\mu,x)}  \int_{0}^{\infty}\sqrt{ \log \CovNum(u; F_p(\mu),d)}\, du.
\end{align*}
Last, the integral term is finite because of the assumption that $(X,d)$ is HBD.
\end{proof}

Note that the assumption of $\liminf_{n\to\infty}\varepsilon_n\sqrt{n} \ge c$ a.s. was only needed to get that $\Pbb_{\mu}(\varepsilon_n\sqrt n \leq c-\eta)\to 0$ as $n\to \infty$ for each $\eta>0 $.
As such, the result still holds under this weaker assumption.

\begin{proof}[Proof of Theorem~\ref{thm:weak_convergence}]
We start by proving claim (i), so suppose that $\varepsilon_n\to 0$ and $1/(\varepsilon_n\sqrt n) \to 0$ hold in probability.
Next, we fix $\delta > 0$, and we claim that there exists some $\varepsilon > 0$ with $F_p(\mu,\varepsilon) \subseteq (F_p(\mu))^{\delta}$.
Towards a contradiction, assume this were not true; then, for all $k\in\Nbb$ there exists $x_k\in F_p(\mu,2^{-k})$ such that $d(x_k,F_p(\mu))>\delta$.
But because $F_p(\mu,1)$ is compact from Lemma~\ref{lemma:Fp-cpt}, there must exist $x\in F_p(\mu)$ and a subsequence $\{k_{\ell}\}$ such that $x_{k_{\ell}}\to x$.
By continuity of the functional $W_p(\mu,\cdot,\cdot)$ \cite[Lemma~3.7]{EvansJaffeSLLN}, this implies $x\in F_p(\mu)$.
But, we also have $d(x,F_p(\mu))\ge\delta$, a contradiction.
Thus, we can get $\varepsilon> 0$ such that $F_p(\mu,\varepsilon) \subseteq (F_p(\mu))^{\delta}$.
In other words, $\dvechaus(F_p(\mu,\varepsilon),F_p(\mu))\leq \delta$.

Now recall from Theorem~\ref{thm:SLLN} 
that $\dvechaus(F_p(\bar\mu_n,\varepsilon),F_p(\mu,\varepsilon) ) \to 0$
almost surely, hence
\begin{equation*}
	\lim_{n\to\infty}\Pbb_{\mu}(\dvechaus(F_p(\bar\mu_n,\varepsilon),F_p(\mu,\varepsilon)) \geq \delta) = 0.
\end{equation*}
Also, using the triangle inequality for the one-sided Hausdorff distance, we get
\begin{align*}
	\{\varepsilon_n \leq \varepsilon \} \cap \{\dvechaus(F_p(\bar\mu_n,\varepsilon),F_p(\mu,\varepsilon)) < \delta  \} 
	&\subseteq \{ \dvechaus(F_p(\bar\mu_n,\varepsilon_n),F_p(\mu,\varepsilon)) < \delta  \}\\
	&\subseteq \{ \dvechaus(F_p(\bar\mu_n,\varepsilon_n),F_p(\mu)) < \delta + \dvechaus(F_p(\mu,\varepsilon),F_p(\mu)) \}\\
	&\subseteq \{ \dvechaus(F_p(\bar\mu_n,\varepsilon_n),F_p(\mu)) < 2\delta\}.
\end{align*}
Combining the previous displays along with $\Pbb_{\mu}(\varepsilon_n > \varepsilon)\to 0$, we get \begin{equation}\label{eq:onesided_convergence_probability}
	\lim_{n\to\infty}\Pbb(\dvechaus(F_p(\bar\mu_n,\varepsilon_n),F_p(\mu)) \geq  2\delta) = 0,
\end{equation}
by the union bound. Next, for any $c>M_p(\mu)$, Theorem~\ref{thm:gaussian_tail_one_sided}
and $\varepsilon_n\in \omega_p(n^{-1/2})$ imply that
\begin{align*}
	\limsup_{n\to\infty} \Pbb_\mu(&\dvechaus(F_p(\mu),F_p(\bar\mu_n,\varepsilon_n)) \geq \delta) \\
	&\leq \limsup_{n\to\infty} (\Pbb_\mu(\dvechaus(F_p(\mu),F_p(\bar\mu_n,cn^{-1/2})) \geq \delta) + \Pbb_{\mu}(\varepsilon_n\leq c n^{-1/2}))\\
	&\leq \limsup_{n\to\infty} \Pbb_\mu(\dhaus(F_p(\mu),F_p(\bar\mu_n,cn^{-1/2})) \geq \delta) \leq \exp\left(-\frac{(c-M_{p}(\mu))^2}{\sigma_{p}^2(\mu)}\right),
\end{align*}
Since this equation holds for any $c>M_p(\mu)$, we showed that
\begin{equation*}
	\lim_{n\to\infty} \Pbb_\mu(\dvechaus(F_p(\mu),F_p(\bar\mu_n,\varepsilon_n)) \geq \delta) = 0.
\end{equation*}
Combining this equation with \eqref{eq:onesided_convergence_probability} gives $\dhaus(F_p(\bar\mu_n,\varepsilon_n),F_p(\mu))\to 0$ in probability.

We now turn to claim (ii), so suppose that $\sigma_p(\mu)>0$.
Since $F_p(\mu)$ is compact, there exists $x,x'\in F_p(\mu)$ such that $\sigma_p(\mu)^2/2 =R_{\mu}(x,x',x,x') = \Var_\mu(d^p(x,Y_1) - d^p(x',Y_1))>0$.
Now for $G\sim \Gcal_\mu$, the random variable $G(x,x')$ follows a Gaussian distribution $N(0,\frac{1}{2}\sigma_p(\mu)^2)$. 
In fact, the process $\tilde G(x'') = G(x'',x')-G(x,x')$ for $x''\in B_1(x)$ is a Gaussian process;
since $Z_1(x_i,x')-Z_1(x,x') = Z_1(x_i,x)$ almost surely (in fact, everywhere), we see that $\tilde{G}$ has covariance function for any $x_1,x_2\in B_1(x)$,
\begin{align*} 
	\Cov(\tilde G(x_1),\tilde G(x_2))
	&=\Cov(Z_1(x_1,x')-Z_1(x,x'), Z_1(x_2,x')-Z_1(x,x') )\\
	&= \Cov(Z_1(x_1,x), Z_1(x_2,x) ) = R_\mu(x_1,x,x_2,x).
\end{align*}

Next for each $\eta>0$, we give probabilistic estimates on $\|\tilde G|_{B_\eta(x)}\|_\infty$.
For any $x_1,x_2\in B_1(x)$, define
\begin{equation*}
	d_{\tilde G}(x_1,x_2) := \sqrt{\Ebb_{\mu}[(\tilde G(x_1) - \tilde G(x_2))^2]} = \sqrt{\Var_{\mu}(Z_1(x_1,x_2))}.
\end{equation*}
Now observe that we have
\begin{align*}
	|Z_1(x_1,x_2)| &\leq p d(x_1,x_2)(d^{p-1}(x_1,Y_1)+d^{p-1}(x_2,Y_1) + \Ebb[d^{p-1}(x_1,Y_1)+d^{p-1}(x_2,Y_1)])\\
	&\leq 2p d(x_1,x_2)(d^{p-1}(x,Y_1)+ W_{p-1}(\mu,x) +2c_{p-1})
\end{align*}
hence
\begin{equation}\label{eq:useful_eq}
	d_{\tilde G}(x_1,x_2) \leq 2p M(x) d(x_1,x_2),
\end{equation}
where $M(x) :=(3(W_{2p-2}(\mu,x)+W_{p-1}(\mu,x)^2 + 4c_{p-1}^2))^{1/2} < \infty$.
Consequently, similarly to the proof of Theorem~\ref{thm:gaussian_tail_one_sided}, we use Dudley's inequality \cite[Theorem~1.3.3]{adler2007random} to get
\begin{align*}
	\Ebb[&\|\tilde G|_{B_\eta(x)}\|_\infty] \leq 24 \int_0^\infty \sqrt{\log\CovNum(u;B_\eta(x),d_{\tilde G})}du\\
	&\leq 48 p M(x) \int_0^\infty \sqrt{\log\CovNum(u;B_\eta(x),d)}du \leq  48 p M(x) \int_0^\eta \sqrt{\log\CovNum(u;B_1(x),d)}du.
\end{align*}
Since $X$ is HBD, the integral is finite.
Thus, by dominated convergence, we can choose $0<\eta(\delta)\leq 1/(2\sqrt 2\cdot p M(x) \delta)$ such that $\Ebb[\|\tilde G|_{B_{\eta(\delta)}(x)}\|_\infty] \leq \delta$.
Next, \eqref{eq:useful_eq} gives
\begin{equation*}\label{eqn:B-TIS}
	\begin{split}
		2 \cdot \sup_{x_1\in B_{\eta(\delta)}(x)} \Var(\tilde G(x_1)) = 2\cdot  \sup_{x_1\in B_{\eta(\delta)}(x)} \Var_{\mu}(Z_1(x_1,x)) \leq 8p^2 M(x)^2 {\eta(\delta)}^2\leq \delta^3.
	\end{split}
\end{equation*}
We now apply the Borell-TIS inequality \cite[Theorem 3.6.1]{adler2007random} together with the two previous equations which imply $   \Pbb(\|\tilde G|_{B_{\eta(\delta)}(x)}\|_\infty \geq \delta + c) \leq e^{-(c/\delta)^2}$
for any $c\geq 0$.
Suppose that $\varepsilon_n$ is not $\omega(n^{-1/2})$. There exists $M \geq 2$ such that $\varepsilon_n\sqrt n \leq  M$ for infinitely many $n\in\Nbb$.
Next note that $G(x,x')\sim N(0,\sigma_p(\mu)^2/2)$ implies $\beta:=\Pbb(G(x,x') > 2M)>0$.
Now observe by the definition of $\tilde{G}$ that
\begin{equation*}
	\{G(x,x') > 2M\} \cap\{ \|\tilde G|_{B_{\eta(\delta)}(x)}\|_\infty \leq M\}
	\subseteq \{ \forall x''\in B_{\eta(\delta)}(x), G(x'',x')> -M+2M = M\}.
\end{equation*}
Combining this with $\Pbb(\|\tilde G|_{B_{\eta(\delta)}(x)}\|_\infty \geq \delta + 1) \leq e^{-1/\delta^2}$, choosing $\delta^\star = \min(1,1/\sqrt{\log (2/\beta)})$ gives
\begin{equation*}
	\Pbb(\forall x''\in B_{\eta(\delta^\star)}(x), G(x'',x')>  M) \geq \beta - \Pbb(\|\tilde G|_{B_{\eta(\delta^\star)}(x)}\|_\infty \geq \delta^\star + 1) \geq \frac{\beta}{2}.
\end{equation*}
For simplicity, let $r = \eta(\delta^\star)>0$. Recall that Proposition~\ref{prop:functional_CLT} 
implies that $\{n^{-1/2}G_{K^1,n}\}_{n\in\Nbb}$ converges in distribution to $G\sim \Gcal_\mu$ on $K^1\times K^1 = F_p(\mu)\times F_p(\mu)$ so that
\begin{align*}
	\lim_{n\to\infty} \Pbb_{\mu}\left(F_p\left(\bar\mu_n,\frac{M}{\sqrt n}\right) \cap B_{\frac{r}{2}}(x) = \emptyset\right) &\geq \lim_{n\to\infty}\Pbb_{\mu}\left(\forall x''\in  B_r(x), \frac{1}{n}G_n(x'',x')>  \frac{M}{\sqrt n}\right)\\
	& = \Pbb(\forall x''\in B_r(x), G(x'',x')>  M) \geq \frac{\beta}{2}.
\end{align*}
Also, if $\varepsilon_{n_k} \le Mn_k^{-1/2}$ then we have
\begin{align*}
	\left\{F_p\left(\bar\mu_{n_k},\frac{M}{\sqrt {n_k}}\right)\cap B_{\frac{r}{2}}(x) = \emptyset\right\} &\subseteq \{F_p(\bar\mu_{n_k},\varepsilon_{n_k})\cap B_{r/2}(x) = \emptyset\}
	\subseteq \left\{\dvechaus(F_p(\mu),F_p(\bar\mu_{n_k},\varepsilon_{n_k}))\geq \frac{r}{2}\right\}.
\end{align*}
Combining the two previous displays, along with the fact that $\varepsilon_n \le Mn^{-1/2}$ infinitely often,
\begin{equation*}
	\limsup_{n\to\infty} \Pbb_{\mu}(\dvechaus(F_p(\mu),F_p(\bar\mu_n,\varepsilon_n)\ge r/2) \ge \lim_{n\to\infty}\Pbb_{\mu}\left(F_p\left(\bar\mu_n,\frac{M}{\sqrt n}\right) \cap B_{\frac{r}{2}}(x) = \emptyset\right) \geq \frac{\beta}{2}>0.
\end{equation*}
In particular, $F_p(\bar\mu_n,\varepsilon_n)$ is not weakly consistent, which ends the proof.
\end{proof}

\begin{proof}[Proof of Proposition~\ref{prop:example_square}]
We start by noting that $(X,d)$ is a compact Dudley space, and that $F_p(\mu) = \{(1,0),(-1,0)\}$. Further, by symmetry, if $(1,0)\in F_p(\bar\mu_n,\varepsilon_n)$, then $(-1,0)\in F_p(\bar\mu_n,\varepsilon_n)$, which implies $\dvechaus(F_p(\mu),F_p(\bar\mu_n,\varepsilon_n)) = 0$. Thus, we have
\begin{align*}
	\left\{ (1,0)\notin F_p(\bar\mu_n,\varepsilon_n) \right\} = \left\{\dvechaus(F_p(\mu),F_p(\bar\mu_n,\varepsilon_n)) = 1\right\},\\
	\left\{ (1,0)\in F_p(\bar\mu_n,\varepsilon_n) \right\} = \left\{\dvechaus(F_p(\mu),F_p(\bar\mu_n,\varepsilon_n)) =0\right\} .
\end{align*}
For simplicity, we will denote by $k_n = \#\{i:Y_i=(0,1)\}- \frac{n}{2}$ and $x_\eta:=(\cos(\pi+\pi\eta/2),\sin(\pi+\pi\eta/2)))$ for any $\eta\in[-1,1]$.
The usual law of large numbers implies that, on an event $E_1$ of probability one, we have $k_n=o(n)$. Then, for any $\eta$, we have
\begin{align*}
	W_p(\bar\mu_n,x_\eta) -W_p(\bar\mu_n,(1,0))  &= \left(\frac{1}{2}+\frac{k_n}{n}\right) \left[(1-\eta)^p-1\right] +\left(\frac{1}{2}- \frac{k_n}{n}\right)\left[(1+\eta)^p-1 \right]\\
	&= -2p \eta\frac{k_n}{n} + O(\eta^2).
\end{align*}
Also, from the above equation, we note that since $p>1$, $W_p(\bar\mu_n,x)$ has a unique minimizer
\begin{equation*}
	\eta_n = \frac{u_n-1}{u_n+1},\quad u_n=\left(\frac{n+2k_n}{n-2k_n}\right)^{\frac{1}{p-1}}.
\end{equation*}
Thus, we have $\eta_n = \frac{2}{p-1} \frac{k_n}{n}+ O((\frac{k_n}{n})^2)$ and $W_p(\bar\mu_n,(1,0)) - m_p(\bar\mu_n) = \frac{4p}{p-1} (\frac{k_n}{n})^2(1 +\delta_n)$, where $\delta_n =  O(\frac{k_n}{n})$. Thus, we have now
\begin{align*}
	\left\{\dvechaus(F_p(\mu),F_p(\bar\mu_n,\varepsilon_n)) = 1\right\} = \{W_p(\bar\mu_n,(1,0)) - m_p(\bar\mu_n) >\varepsilon_n \}.
\end{align*}
Now, by the central limit theorem, $\frac{k_n}{\sqrt n}\to N(0,\frac{1}{4})$ in distribution.
As a result, for any $c>0$, 
\begin{equation*}
	\Pbb_{\mu}\left(W_p(\bar\mu_n,(1,0)) - m_p(\bar\mu_n) > \frac{4p}{p-1}\frac{c^2}{n} \right) \underset{n\to\infty}{\longrightarrow} \sqrt{\frac{2}{\pi}}\int_c^\infty e^{-2t^2}dt.
\end{equation*}
Further, this shows that $\Pbb_{\mu}(\dhaus(F_p(\mu),F_p(\bar\mu_n,\varepsilon_n))\geq 1)$ converges to $0$ if and only if one has $\dhaus(F_p(\mu),F_p(\bar\mu_n,\varepsilon_n)) \to 0$ in probability if and only if $\varepsilon_n = \omega(n^{-1})$.
It remains to check the other one-sided Haussdorff distance, for which it is already known that if $\varepsilon=o(1)$, 
then one has $\dhaus(F_p(\bar\mu_n,\varepsilon_n),F_p(\mu)) \to 0$ a.s.
Also, $W_p(\mu,\cdot)$ is continuous at $(-1,0)$. Therefore, if $\varepsilon\notin o(1)$, there exists an increasing sequence $(n_k)_k$ and $\eta> 0$ sufficiently small such that for all $k\geq 1$, we have $ \varepsilon_{n_k} > W_p(\mu,x_\eta) - m_p(\mu)$. Then, by the weak law of large numbers,
\begin{equation*}
	\Pbb_{\mu}(\dhaus(F_p(\bar\mu_{n_k},\varepsilon_{n_k}),F_p(\mu)) \geq \eta ) \geq \Pbb_{\mu}(x_\eta\in F_p(\bar\mu_{n_k},\varepsilon_{n_k})) \underset{n\to\infty}{\longrightarrow} 1.
\end{equation*}
This ends the proof of the claims for weak $\dhaus$-consistency.

We now turn to strong $\dhaus$-consistency.
The usual law of iterated logarithms shows that on an event $E_2$ of probability one, the set $\{(2n\log\log n)^{-1/2}k_n\}_{n\in\Nbb}$ has closure $[-\frac{1}{2},\frac{1}{2}]$.
Further, note that for $\varepsilon_n = c\log\log n/n$, we have
\begin{equation*}
	\{(1,0)\in F_p(\bar\mu_n,\varepsilon_n)\} = \left\{(2n\log\log n)^{-1/2} k_n \leq \sqrt{\frac{c (p-1)}{8p(1+\delta_n)}} \right\}.
\end{equation*}
Therefore, if $c>c^*$, on $E_1\cap E_2$, we have $\delta_n\to 0$ and for $n$ sufficiently large, $(1,0)\in F_p(\bar\mu_n,\varepsilon)$, hence $F_p(\bar\mu_n,\varepsilon_n)$ is strongly $\dhaus$-consistent (recall that $\varepsilon_n\to 0$). On the other hand, if $c<c^*$, on $E_1\cap E_2$, infinitely often we have $(1,0)\notin F_p(\bar\mu_n,\varepsilon)$ which implies that $F_p(\bar\mu_n,\varepsilon_n)$ is not strongly $\dhaus$-consistent. This ends the proof of the proposition.
\end{proof}

\begin{proof}[Proof of Proposition~\ref{prop:ex-two-point}]
Without loss of generality, we suppose $q>\frac{1}{2}$ so that $F_p(\mu) = \{x_1\}$. Now writing $\varepsilon_n = 2q-1 - c_n n^{-1/2}$, note that
\begin{align*}
	\Pbb_{\mu}(F_p(\bar\mu_n,\varepsilon_n)\neq F_p(\mu)) &=
	\Pbb_{\mu}(W_p(\bar\mu_n,x_2)\leq W_p(\bar\mu_n,x_1)+\varepsilon_n) \\
	&= \Pbb_{\mu}\left(\frac{2}{n}\sum_{i=1}^n \mathbf{1}[Y_i=x_1]-1 \leq \varepsilon_n\right)\\
	&=\Pbb_{\mu}\left(\frac{1}{\sqrt n}\sum_{i=1}^n (\mathbf{1}[Y_i=x_1] -q) \leq -\frac{c_n}{2} \right).
\end{align*}
By the central limit theorem, $n^{-1/2}\sum_{i=1}^n (\mathbf{1}[Y_i=x_1]-q) \to N(0,q(1-q))$. Therefore, we have $\Pbb_{\mu}(F_p(\bar\mu_n,\varepsilon_n)\neq F_p(\mu)) \to 0$ if and only if $c_n\to \infty$ almost surely. Now observe that one has $\dhaus(F_p(\mu),F_p(\bar\mu_n,\varepsilon_n))\in\{0,1\}$. Therefore, $F_p(\bar\mu_n,\varepsilon_n)$ is weakly $\dhaus$-consistent if and only if one has $\Pbb_{\mu}(F_p(\bar\mu_n,\varepsilon_n)\neq F_p(\mu))$ vanishes, proving the first claim.

We next turn to strong consistency.
Writing $\varepsilon_n = 2q-1 - c_n (\log\log n)^{1/2} n^{-1/2}$ and using the same arguments as above,
\begin{align*}
	\Pbb_{\mu}&(\dhaus(F_p(\bar\mu_n,\varepsilon_n),F_p(\mu))\to 0)\\
	&= \Pbb_{\mu}\left( \frac{1}{\sqrt{n\log\log n}}\sum_{i=1}^n (\mathbf{1}[Y_i=x_1]-q) > -\frac{c_n}{2} \text{ for $n$ suff. large}\right)
\end{align*}
Thus, the law of iterated logarithm readily implies the second claim since the variance of $\mathbf{1}[Y=x_1]-q$ is $q(1-q)$.
\end{proof}

\section{Proofs omitted from the phylogenetic application (Subsection~\ref{subsec:proof_phylo})}
\label{sec:phylogenomic}

\begin{proof}[Proof of Lemma~\ref{lemma:tropical_HBD}]
For simplicity, we identify $\Rbb^k/\Rbb\mathbf{1}$ to $\Rbb^{k-1}$ by setting the first coordinate to 0. For any $x, y\in\Rbb^{k-1}$, we then have
\begin{equation*}
	\dtrop(x,y) = \max\left(\|x-y\|_\infty,\max_{1\leq i<j\leq k-1} |x_i-x_j-y_i+y_j|\right).
\end{equation*}
In particular, note that
\begin{equation*}
	\|x-y\|_\infty \leq \dtrop(x,y) \leq 2\|x-y\|_\infty.
\end{equation*}
Since $(\Rbb^{k-1},\|\cdot\|_\infty)$ is HBD, the space $(\Rbb^{k-1},\dtrop)$ is HBD as well.
\end{proof}

We next turn to the proof of Theorem~\ref{thm:phylo-alg}. Algorithm~\ref{alg:phylo-estimator} implements an adapted form of the procedure described in Theorem~\ref{thm:adaptive-consistency}. The first computation in line 17 uses a sub-optimal relaxation $\varepsilon_{1,n,\delta} = m_1(\bar\mu_n) n^{-1/2}\log\log n$. In the following steps, the algorithm refines the potential pre-factor for the optimal relaxation rate $n^{-1/2}(\log\log n)^{1/2}$. In the following, we explain line 22 from Algorithm~\ref{alg:phylo-estimator}, in which we compute an upper bound for $\sigma_1(\bar\mu_n,\varepsilon_{s,n,\delta})$. We start with the case without relaxation.

\begin{proof}[Proof of Lemma~\ref{lemma:linear_frechet_mean}]
It suffices to show that $\dtrop(\cdot\,, z):[x,x']\to \R$ is affine for all $x, x'\in F_1(\mu)$, where $[x,x']:=\{(1-t)x+tx':0\le t \le 1\}$ denotes the line segment connecting $x$ and $x'$.
To show this, take an arbitrary $x, x'\in F_1(\mu)$ and $0\le t\le 1$, and set $x^\ast = (1-t)x+ tx'$.
Then take arbitrary $r>0$, and use $z\in\supp(\mu)$ to get $\mu(B_r(z))>0$.
Observe by construction that $\dtrop(\cdot\,, u):F_1(\mu)\to\R$ is convex for any $u\in\R^k/\Rbb\mathbf{1}$, hence that we have
\begin{equation}\label{eqn:phylo0}
	0 \le (1-t)(\dtrop(x,u)-\dtrop(x^{\ast},u)) + t(\dtrop(x',u) - \dtrop(x^{\ast},u)).
\end{equation}
Now integrate \eqref{eqn:phylo0} over $(\R^k/\Rbb\mathbf{1})\setminus B_r(z)$ with respect to $\mu$ to get
\begin{equation}\label{eqn:phylo1}
	\begin{split}
		0 \leq (1-t)&\int_{(\R^k/\Rbb\mathbf{1})\setminus B_r(z)}(\dtrop( x, u)-\dtrop(x^{\ast},u))\,d\mu(u) \\
		&+ t\int_{(\R^k/\Rbb\mathbf{1})\setminus B_r(z)}(\dtrop( x', u)-\dtrop(x^{\ast},u))\,d\mu(u).
	\end{split}
\end{equation}
Next add
\begin{equation}\label{eqn:phylo2}
	\mu(B_r(z))\Bigg((1-t)(\dtrop(x,z)-\dtrop(x^{\ast},z))+t(\dtrop(x',z)-\dtrop(x^{\ast},z))\Bigg)
\end{equation}
to both sides of \eqref{eqn:phylo1}, use $x,x'\in F_1(\mu)$, rearrange, and use the triangle inequality to get
\begin{align*}
	\mu(B_r(z))&\Bigg((1-t)(\dtrop(x,z)-\dtrop(x^{\ast},z))+t(\dtrop(x',z)-\dtrop(x^{\ast},z))\Bigg) \\
	&\le (1-t)W_1(\mu, x,x^{\ast}) + tW_1(\mu, x',x^{\ast}) \\
	&+(1-t)\int_{B_r(z)}((\dtrop(x,z) -\dtrop( x^{\ast}, z))-(\dtrop(x,u) - \dtrop(x^{\ast},u)))\,d\mu(u)\\
	&+t\int_{B_r(z)}((\dtrop(x',z) -\dtrop(x^{\ast}, z))-(\dtrop(x',u) - \dtrop(x^{\ast},u)))\,d\mu(u)\\
	&\le(1-t)\int_{B_r(z)}(r-(-r))\,d\mu(u)\\
	&+t\int_{B_r(z)}(r-(-r))\,d\mu(u)\\
	&\le 2\mu(B_r(z))\cdot r.
\end{align*}
Dividing the above by $\mu(B_r(z)) > 0$ implies
\begin{equation}\label{eqn:phylo3}
	(1-t)(\dtrop(x,z)-\dtrop(x^{\ast},z))+t(\dtrop(x',z)-\dtrop(x^{\ast},z) \leq 2r.
\end{equation}
Finally, by taking $r\to 0$ in \eqref{eqn:phylo3} and combining with \eqref{eqn:phylo0} for $u=z$, we get
\begin{equation}
	\dtrop(x^{\ast},z) = (1-t)\dtrop(x,z) + t\dtrop(x',z).
\end{equation}
This proves the result.
\end{proof}

\begin{lemma}\label{lemma:convexity}
If $\mu\in\Pcal(\Rbb^k/\Rbb\mathbf{1})$, then the function $\Var_{\mu}(\dtrop(\cdot\,, Y_1)-\dtrop(\cdot\,, Y_1)):F_1(\mu)\times F_1(\mu)\to [0,\infty)$ is convex.
Consequently, its maximum is attained at a pair of extreme points of $F_1(\mu)$.
\end{lemma}

\begin{proof}
Since $\Rbb^k/\Rbb\mathbf{1}$ is a separable metric space, one has $\mu(\supp(\mu))=1$.
Also, for $x,x'\in F_1(\mu)$, we have $W_1(\mu,x,x') = W_1(\mu,x',x) = 0$.
Thus:
\begin{align*}
	&\Var_{\mu}(\dtrop( x, Y_1) - \dtrop( x', Y_1)) \\
	&= \int_{\Rbb^k/\Rbb\mathbf{1}}  (\dtrop( x, u) - \dtrop( x', u))^2 \,d\mu(u) - \left(\int_{\Rbb^k/\Rbb\mathbf{1}} (\dtrop( x, u) - \dtrop( x', u)) \,d\mu(u) \right)^2\\
	&=   \int_{\supp(\mu)}  (\dtrop( x, u) - \dtrop( x', u))^2 \,d\mu(u).
\end{align*}
By Lemma~\ref{lemma:linear_frechet_mean}, for any $u\in \supp(\mu)$, the function $(x, x')\mapsto \dtrop( x, u) - \dtrop( x', u)$ is affine on $F_1(\mu)\times F_1(\mu)$, hence its square is convex.
This shows that the map $(x, x')\mapsto \Var_{\mu} (\dtrop( x, Y_1) - \dtrop( x', Y_1))$ is a convex combination of convex functions, hence convex.
\end{proof}

The convexity of the variance functional, unfortunately, does not extend to relaxed Fr\'echet mean sets.
To get around this, we need to introduce some notation:
For each $z\in\Rbb^k/\Rbb\mathbf{1}$, there exists a partition of $\Rbb^k/\Rbb\mathbf{1}$ into finitely many polyhedra $P_z=\{A_{z,m}: m\in M_{z}\}$ such that $\dtrop(\cdot\,, z):\Rbb^k/\Rbb\mathbf{1}\to\R$ is linear on each $A_{z,m}$; 
for a set $S\subseteq \Rbb^k/\Rbb\mathbf{1}$, we write $P_S:=\{A_{S,m}: m\in M_{S}\}$ for the coarsest common refinement of the partitions $\{P_z\}_{z\in S}$.

\begin{proposition}\label{prop:relaxed}
Suppose that $A\subseteq \Rbb^k/\Rbb\mathbf 1$ is a compact convex set with extreme points $\{v_i:i\in I\} := \textnormal{ex}(A)$ and that $\mu\in\Pcal(\Rbb^k/\Rbb\mathbf{1})$, and write $\{v_{A,j}: j\in I_{A}\}:=A\cap\bigcup_{m\in M_{\supp(\mu)}}\textnormal{ex}(A_{\supp(\mu),m})$ for the set of extreme points of the polyhedra in $P_{\supp(\mu)}$ which fall in $A$.
Then, the function $\E_{\mu}[(\dtrop(\cdot\,,Y_1)-\dtrop(\cdot\,,Y_1))^2]:A\times A\to [0,\infty)$ is maximzed at a pair of points in $\{v_i: i\in I\}\cup\{v_{A,j}: j\in J\}$.
\end{proposition}

\begin{proof}
By the same argument as in the proof of Lemma~\ref{lemma:convexity}, the function $(x,  x')\mapsto \E_{\mu}[(\dtrop( x,  z) - \dtrop(x', z))^2]$ is convex on $A_{\supp(\mu),m}\times A_{\supp(\mu),m'}$ for all $m,m'\in M_{\supp(\mu)}$, so its maximum must be achieved on $\textnormal{ex}(A_{\supp(\mu),m}\times A_{\supp(\mu),m'})=\textnormal{ex}(A_{\supp(\mu),m})\times \textnormal{ex}(A_{\supp(\mu),m'})$.
Because of the intersection with $A$, the result follows.
\end{proof}

We will apply this result to to the case $A=F_1(\bar \mu_n)$, where $\bar\mu_n = \frac{1}{n}\sum_{i=1}^{n}\delta_{Y_i}$ denotes the empirical measures of the first $n\in\Nbb$ samples.
To do this, observe that the extreme points of all the polyhedra in $P_{\supp(\bar \mu_n)} = P_{\{Y_1,\ldots, Y_n\}}$ can be found as the projection onto the first $k$ coordinates of the extreme points of the following polyhedron
\begin{equation*}
\left\{( v, c)\in\Rbb^k\times \Rbb^n: c_l \geq v_i-v_j -
(Y_l)_i + (Y_l)_j, \; l\in[n],i,j\in[k]\right\}.
\end{equation*}
As a result, we can find the extreme points of $F_1(\mu,\varepsilon)$ together with extreme points of the partition $P_{\{Y_1,\ldots, Y_n\}}$ falling in $F_1(\mu,\varepsilon)$ by taking the projection onto the first $k$ coordinates of the extreme points of the polyhedron
\begin{equation*}
\left\{( v, c)\in\Rbb^k\times \Rbb^n: \frac{1}{n}\sum_{l\in[n]}  c_l \leq m_1(\bar \mu_n) + \varepsilon,\text{ and } c_l \geq v_i-v_j -(Y_l)_i + (Y_l)_j, \; l\in[n],i,j\in[k]\right\}.
\end{equation*}
If we denote by $\{v_i:i\in I\}$ the resulting set of extreme points, then Proposition~\ref{prop:relaxed} shows that
\begin{equation*}
\sigma_1(\mu,\varepsilon) \leq \sqrt{2 \max_{j,j'\in I}\frac{1}{n}\sum_{i=1}^{n}(\dtrop( v_j, Y_i)-\dtrop( v_{j'}, Y_i))^2 }.
\end{equation*}
We are now ready to prove Theorem~\ref{thm:phylo-alg}.

\begin{proof}[Proof of Theorem~\ref{thm:phylo-alg}]
We will denote by $s^\ast$ the index of the terminal relaxation rate $\varepsilon_{s^\ast,n,\delta}$.
Algorithm~1 implements the strategy described above in the computation of line 22.
As a result, by Proposition~\ref{prop:relaxed}, for $s\geq 1$,
\begin{equation*}
	\sqrt{2 c_{s,n}} \geq \sigma_1(\bar\mu_n,\varepsilon_{s,n,\delta}).
\end{equation*}
For convenience, let $\varepsilon_{n,\delta'}:=(1+\delta'/2)^{1/2}\sigma_1(\mu) n^{-1/2}(\log\log n)^{1/2}$ for $0<\delta'<\delta$. Using the same arguments as in the proof of Theorem~\ref{thm:adaptive-consistency} leading to \eqref{eq:useful_3}
in the main body, the event
\begin{equation*}
	E_{\delta'} = \{F_1(\mu)\subseteq F_1(\bar\mu_n,\varepsilon_{n,\delta'})\text{ for suff. large }n\in\Nbb\} \cap  \left\{\liminf_{n\to\infty} \sigma_1(\bar\mu_n,\varepsilon_{n,\delta'}) \geq \sigma_1(\mu)\right\}
\end{equation*}
has full probability. Because $\varepsilon_{1,n,\delta}$ uses a suboptimal relaxation rate $n^{-1/2} \log\log n$, there exists $n_{\delta'}$ such that for $n\geq n_{\delta'}$, one has $\varepsilon_{1,n,\delta}\geq \varepsilon_{n,\delta'}$. Fix $n\geq n_0$ such that $\sigma_1(\bar\mu_n,\varepsilon_{n,\delta'}) \geq \sqrt{(2+\delta')/(2+\delta )} \sigma_1(\mu)$. We aim to show that for all $s\geq 1$, one has $\varepsilon_{s,n,\delta} \geq \varepsilon_{n,\delta'}$. We do this by induction. Because $n\geq n_{\delta'}$, this is true for $s=1$. Now suppose that for $s\geq 1$ one has $\varepsilon_{s,n,\delta} \geq \varepsilon_{n,\delta'}$. Using Proposition~\ref{prop:relaxed}, we have that
\begin{align*}
	c_{s,n} &= \sqrt{\max_{x,x'\in F_1(\bar\mu_n,\varepsilon_{s,n,\delta})} \frac{1}{n}\sum_{i=1}^n (\dtrop(x,Y_i) - \dtrop(x',Y_i))^2}\\
	&\geq \sqrt{\max_{x,x'\in F_1(\bar\mu_n,\varepsilon_{n,\delta'})} \frac{1}{n}\sum_{i=1}^n (\dtrop(x,Y_i) - \dtrop(x',Y_i))^2}\\
	&\geq \frac{\sigma_1(\bar\mu_n,\varepsilon_{n,\delta'})}{\sqrt 2}\\
	&\geq \sqrt{\frac{1+\delta'/2}{2+\delta}} \sigma_1(\mu).
\end{align*}
As a result, we obtain
\begin{equation*}
	\varepsilon_{s+1,n,\delta} = c_{s,n}\sqrt{\frac{(2+\delta) \log\log n}{n}}
	\geq \sigma_1(\mu) \sqrt{\frac{(1+\delta'/2)\log\log n}{n}}
	= \varepsilon_{n,\delta'},
\end{equation*}
where in the last inequality we used the assumption $\sigma_1(\bar\mu_n,\varepsilon_{n,\delta'}) \geq \sqrt{(2+\delta')/(2+\delta )} \sigma_1(\mu)$.
In particular, the final relaxation satisfies $\varepsilon_{s^\ast,n,\delta} \geq \varepsilon_{n,\delta'}$. Putting everything together, we obtain
\begin{equation*}
	E_{\delta'} \subseteq \left\{ \liminf_{n\to\infty} \sigma_1(\bar\mu_n,\varepsilon_{n,\delta'}) \geq \sigma_1(\mu)\right\}
	\subseteq \left\{\varepsilon_{s^\ast,n,\delta} \geq \varepsilon_{n,\delta'} \text{ for suff. large } n\in\Nbb\right\}.
\end{equation*}
This shows that, on $E_{\delta'}$, for sufficiently large $n\in\Nbb$ one has $F_1(\mu)\subseteq F_1(\bar\mu_n,\varepsilon_{n,\delta'})\subseteq F_1(\bar\mu_n,\varepsilon_{s^\ast,n,\delta})$, as well as
\begin{equation*}
	\liminf_{n\to\infty} \varepsilon_{s^\ast,n,\delta} \sqrt{\frac{n}{\log\log n}} \geq \liminf_{n\to\infty} \varepsilon_{n,\delta'} \sqrt{\frac{n}{\log\log n}} = \sqrt{\frac{2+\delta'}{2}} \sigma_1(\mu).
\end{equation*}
As a result, we consider the event $E:=\bigcap_{l\in\Nbb} E_{\delta -2^{-l}}$ which has full probability.
On $E$ and for any $\delta > 0$, we have
\begin{equation*}
	\liminf_{n\to\infty} \varepsilon_{s^\ast,n,\delta} \sqrt{\frac{n}{\log\log n}} \geq  \sqrt{\frac{2+\delta}{2}} \sigma_1(\mu),
\end{equation*}
and $F_1(\mu)\subseteq F_1(\bar\mu_n,\varepsilon_{s^\ast,n,\delta})$ for sufficiently large $n\in\Nbb$.

We now give upper bounds on $\varepsilon_{s^\ast,n,\delta}$. To do so, we use the same strategy as above but for the relaxation $\varepsilon_{1,n,\delta}$. The proof of Theorem~\ref{thm:adaptive-consistency} 
shows that the event
\begin{equation*}
	F = \left\{\limsup_{n\to\infty} \sigma_1(\bar\mu_n,\varepsilon_{1,n,\delta}) \leq \sigma_1(\mu) \right\}
\end{equation*}
has full probability.
Next, by construction of the relaxed Fr\'echet mean set, on this set, the Fr\'echet mean functional has values in $[m_1(\bar\mu_n),m_1(\bar\mu_n)+\varepsilon_{1,n,\delta}]$ so that for any $x,x'\in F_1(\bar\mu_n,\varepsilon_{1,n,\delta})$, we have
\begin{equation*}
	\E_{\bar \mu_n}[(\dtrop(x,Y_1) - \dtrop(x',Y_1))^2] \leq \Var_{\bar \mu_n}(\dtrop(x,Y_1) - \dtrop(x',Y_1)) + (\varepsilon_{1,n,\delta})^2.
\end{equation*}
Using Proposition~\ref{prop:relaxed}, we obtain
\begin{equation*}
	c_{1,n} = \sqrt{\max_{x,x'\in F_1(\bar\mu_n,\varepsilon_{1,n,\delta})} \frac{1}{n}\sum_{i=1}^n (\dtrop(x,Y_i) - \dtrop(x',Y_i))^2} \leq \sqrt{\frac{\sigma_1(\bar\mu_n,\varepsilon_{1,n,\delta})^2 }{2} + (\varepsilon_{1,n,\delta})^2}.
\end{equation*}
In particular, on $F$, we obtain
\begin{equation*}
	\limsup_{n\to\infty} \varepsilon_{2,n,\delta}\sqrt{\frac{n}{\log\log n}} \leq \sqrt{\frac{2+\delta}{2}} \limsup_{n\to\infty} \sigma_1(\bar\mu_n,\varepsilon_{1,n,\delta}) \leq \sqrt{\frac{2+\delta}{2}}\sigma_1(\mu).
\end{equation*}
Because we always have $\varepsilon_{s^\ast,n,\delta} \leq \varepsilon_{2,n,\delta}$, on $F$ this shows that
\begin{equation*}
	\limsup_{n\to\infty} \varepsilon_{s^\ast,n,\delta}\sqrt{\frac{n}{\log\log n}} \leq \sqrt{\frac{2+\delta}{2}}\sigma_1(\mu).
\end{equation*}
In particular, because $F$ has full probability via Theorem~\ref{thm:consistency}, we also obtain with probability one, $\dvechaus(F_1(\bar\mu_n,\varepsilon_{s^\ast,n,\delta}),F_1(\mu))\to 0$. Putting the upper and lower bounds together shows that with full probability, $\dhaus(F_1(\bar\mu_n,\varepsilon_{s^\ast,n,\delta}),F_1(\mu))\to 0$ and
\begin{equation*}
	\lim_{n\to\infty} \varepsilon_{s^\ast, n,\delta} \sqrt{\frac{n}{\log\log n}} = \sqrt{\frac{2+\delta}{2}}\sigma_1(\mu).
\end{equation*}
Finally, taking the intersection of these events over all $\delta\in\{2^{-l}:l\in\Nbb\}$ shows
\begin{equation*}
	\lim_{\delta\to 0}\lim_{n\to\infty} \varepsilon_{s^\ast, n,\delta} \sqrt{\frac{n}{\log\log n}} = \sigma_1(\mu),
\end{equation*}
which ends the proof.
\end{proof}

\end{appendix}

\end{document}